\theoremstyle{definition}
\newtheorem*{thm*}{Theorem}
\newtheorem{prop}{Proposition}[section]
\newtheorem{thm}[prop]{Theorem}
\newtheorem{lem}[prop]{Lemma}
\newtheorem{df}[prop]{Definition}
\newtheorem{rem}[prop]{Remark}
\newtheorem{cor}[prop]{Corollary}
\newcommand{\supp}{\mathop{ {\mathrm{supp} }}\nolimits}
\newcommand{\Tor}{\mathop{\mathrm{Tor}}\nolimits}
\newcommand{\im}{\mathop{\mathrm{im}}\nolimits}
\newcommand{\id}{\mathop{\mathrm{id}}\nolimits}
\begin{document}
\title[$L^2$-Betti numbers and costs of groupoids]{$L^2$-Betti numbers and costs in the framework \\ of discrete groupoids}
\author{Atsushi Takimoto}
\address{Graduate School of Mathematics, Kyushu University\\ Fukuoka, 819-0395, Japan}
\email{a-takimoto@math.kyushu-u.ac.jp}

\begin{abstract}
We unify the known basic theories on $L^2$-Betti numbers and costs in the framework of probability measure preserving discrete groupoids.  
\end{abstract}

\maketitle

\section{Introduction} 

There are two approaches to the {\it $L^2$-Betti numbers} $\beta_n^{(2)}(\Gamma)$, $n=0,1,2,\dots$, of an arbitrary (countable) discrete group $\Gamma$; one is geometric and the other is algebraic, each of which has individual merits. The first and geometric one due to Cheeger and Gromov \cite{cheeger-gromov} utilizes chain complexes of Hilbert spaces obtained from appropriate simplicial complexes equipped with actions of $\Gamma$, while the second and algebraic one due to L{\"{u}}ck (see his book \cite{luck:survey}) does chain complexes of algebraic $\Gamma$-modules with the help of his `algebraization' of the original Murray-von Neumann dimension.  

Following Cheeger-Gromov's geometric approach, Gaboriau \cite{gaboriau:betti} introduced the $L^2$-Betti numbers $\beta_n^{(2)}(\mathcal{R})$ of an arbitrary probability measure preserving (pmp for short) (countable) discrete equivalence relation $\mathcal{R}$. For an arbitrary essentially free, pmp action $\Gamma\curvearrowright (X,\mu)$ of a discrete group he showed, among others, that its orbit equivalence relation $\mathcal{R}_{\Gamma\curvearrowright (X,\mu)}$ satisfies the formula 
\begin{equation}\label{a}
\beta_n^{(2)}(\mathcal{R}_{\Gamma\curvearrowright (X,\mu)}) = \beta_n^{(2)}(\Gamma),
\end{equation}
which in turn says that the $\beta_n^{(2)}(\Gamma)$ are orbit equivalence invariants. Under the influence of Gaboriau's work, Sauer \cite{sauer} then adapted L\"{u}ck's algebraic approach to an arbitrary pmp discrete groupoid $G$, and defined the $L^2$-Betti numbers $\beta_n^{(2)}(G)$. The pmp discrete groupoids form a natural class including both the discrete groups and the pmp discrete equivalence relations as its subclasses. By definition, Sauer's $\beta_n^{(2)}(G)$ recovers $\beta_n^{(2)}(\Gamma)$ when $G$ is a discrete group $\Gamma$. Moreover, it is rather easier to prove the formula \eqref{a} in his definition, and it turns out that Sauer's $L^2$-Betti numbers agree with Gaboriau's when $G = \mathcal{R}_{\Gamma\curvearrowright (X,\mu)}$ with essentially free, pmp actions $\Gamma\curvearrowright (X,\mu)$. The complete identification between Gaboriau's and Sauer's $L^2$-Betti numbers for pmp discrete equivalence relations was finally settled by Neshveyev and Rustad \cite{neshveyev}. Their proof utilizes more recent technologies developed by Thom \cite{thom}, and turns out to simplify some technical parts of Gaboriau's theory. However, it is still missing to develop the geometric approach to the $L^2$-Betti numbers in the framework of pmp discrete groupoids, and we will fill up this gap in the present notes.     

Before his introduction of $L^2$-Betti numbers of pmp discrete equivalence relations, Gaboriau \cite{gaboriau:cost} studied the so-called {\it cost} $C_\mu(\mathcal{R})$ of an arbitrary pmp discrete equivalence relation $\mathcal{R}$ over a probability space $(X,\mu)$ thoroughly, following Levitt's former work \cite{levitt}. He made many non-trivial computations including that $C_\mu(\mathcal{R}_{\mathbb{F}_n\curvearrowright(X,\mu)}) = n$ for any essentially free, pmp action $\mathbb{F}_n \curvearrowright (X,\mu)$ possibly with $n=\infty$. He also proved, in his work \cite{gaboriau:betti} on $L^2$-Betti numbers, the following inequality   
\begin{equation}\label{b}
\beta_1^{(2)}(\mathcal{R}) - \beta_0^{(2)}(\mathcal{R}) +1 \leq C_\mu(\mathcal{R}).
\end{equation}
Gaboriau's theory of costs including this inequality also seems missing for arbitrary pmp discrete groupoids. It is rather straightforward, see \cite{ueda},\cite{abert-nikolov},\cite{abert-weiss}, to adapt Levit-Gaboriau's definition of costs to pmp discrete groupoids. However, it is certainly non-trivial to generalize the main assertions in Gaboriau's theory of costs. In fact, \cite[Proposition I.11]{gaboriau:cost} does never hold true for pmp discrete groupoids (see \cite[Remark 12 (1)]{ueda}). Nevertheless, Ueda \cite{ueda} showed that some important others, e.g.~\cite[Proposition II.6, Th{\'e}or{\`e}me IV.15]{gaboriau:cost}, still hold true for arbitrary pmp discrete groupoids, but his work was done in terms of operator algebras. In the present notes we will translate his work into terms of pmp discrete groupoids by supplying necessary technical ingredients, and then establish the formula \eqref{b} for arbitrary pmp discrete groupoids by generalizing necessary parts of Gaboriau's theory to the groupoid setting. We also compute the costs of pmp `treeable' groupoids. 

As mentioned above the present notes supply necessary explanations for unifying previous fundamental works on $L^2$-Betti numbers and costs in the class of pmp discrete groupoids. Hence some parts of the present notes may have implicitly been known so far, though nobody explored them in any literature. We intend to provide the present notes as a reference for future study of pmp discrete groupoids. We use the necessary contents from Sauer's paper \cite{sauer} without explanation and also some technical things from \cite{neshveyev} to make these notes short enough. Nevertheless, these notes with the help of only \cite{neshveyev}, \cite{sauer}, and \cite{ueda} are essentially self-contained.

\section{Pmp discrete groupoids and their von Neumann algebras} 

Let $G$ be a discrete (standard) Borel groupoid with unit space $X$ (usually denoted by $G^{(0)}$ instead). The source map and the range map are denoted by $s : G \to X$ and $r : G \to X$, respectively. If the mapping $g \in G \mapsto (r(g),s(g)) \in X\times X$ is injective, we say that $G$ is {\it principal}. In this case, $G$ is nothing but a discrete Borel equivalence relation. A Borel subset $E \subset G$ is said to be {\it one-sheeted} if $s \upharpoonright_E$ and $r \upharpoonright_E$ are injective. The symbol $\mathcal{G}_G$ denotes the set of one-sheeted sets of $G$. Since $s$ and $r$ are countable-to-one maps, the following hold true (due to e.g.~\cite[Theorems 15.1, 15.2, 18.10]{kechris:GTM156}): (i) $G$ can be decomposed into countable disjoint union of elements in $\mathcal{G}_G$; (ii) for each $E \in \mathcal{G}_G$ we have a partially defined Borel isomorphism $\varphi_E := (r\!\upharpoonright_E)\circ(s\!\upharpoonright_E)^{-1} : s(E) \to r(E)$. Assume that $X$ is endowed with a probability measure $\mu$ which is invariant under all $\varphi_E$, $E \in \mathcal{G}_G$. We call such a pair $(G,\mu)$ a {\it pmp discrete groupoid}. Define a (possibly infinite) measure $\mu^G$ on $G$ by $\mu^G(B) = \int_X\, \#(s^{-1}(\{x\})\cap B)\, \mu(dx)$ for every Borel subset $B$ of $G$. 

\medskip
The {\it groupoid ring} $\mathbb{C}[G]$ of $G$ is defined to be the linear subspace of functions $f \in L^\infty(G,\mu^G)$ such that two functions $x \mapsto \#(s^{-1}(x) \cap \supp f)$, $x \mapsto \#(r^{-1}(x) \cap \supp f)$ are bounded $\mu$-a.e. The product $(f_1, f_2) \in \mathbb{C}[G]\times\mathbb{C}[G] \mapsto f_1 f_2 \in \mathbb{C}[G]$ and the adjoint $f \in \mathbb{C}[G] \mapsto f^* \in \mathbb{C}[G]$ are defined by $(f_1 f_2)(g) = \sum_{ g_1 g_2 = g} f_1(g_1) f_2(g_2)$ and $(f^*)(g) := \overline{f(g^{-1})}$, respectively. With these operations, $\mathbb{C}[G]$ becomes a $*$-algebra. We remark that if $G$ is a discrete group, then $\mathbb{C}[G]$ is just the usual group ring.  

\medskip
The so-called {\it{\rm(}left{\rm)} regular representation} $\mathbb{C}[G] \curvearrowright L^2(G) := L^2(G,\mu^G)$ is defined by $(f \xi)(g) := \sum_{g_1 g_2 = g} f(g_1)\xi(g_2)$ for $f \in \mathbb{C} [G]$ and $\xi \in L^2(G)$, and it generates the {\it groupoid von Neumann algebra} $L(G) = \mathbb{C}[G]''$ on $L^2(G)$. The von Neumann algebra $L(G)$ has a faithful normal tracial state $\tau_G$ defined by a cyclic and separating vector $\mathbbm{1}_X$ (the characteristic function on $X$). Remark that each $u(E) := \mathbbm{1}_E$ inside $L(G)$, $E \in \mathcal{G}_G$, defines a partial isometry in $L(G)$ and that $L(G)$ is generated by these $u(E)$ as a von Neumann algebra, since $G$ is a countable disjoint union of one-sheeted sets. In closing of this section, we give two remarks: (1) If $G$ is a discrete group, then $(L(G),\tau_G)$ is nothing but the {\it group von Neumann algebra} with the canonical tracial state. (2) If $G$ is the {\it transformation groupoid} (see the glossary prior to Lemma \ref{trivial_action} for the definition) arising from a pmp action $\Gamma \curvearrowright X$ of a discrete group, then $L(G)$ is naturally identified with $L^\infty(X)\rtimes\Gamma$, the crossed product of $L^\infty(X)$ by the induced action of $\Gamma$ on $L^\infty(X)$ in the sense of e.g.~\cite[Definition 13.1.3]{kadison-ringrose:bookv2}. The identification is precisely given by $u(E_\gamma) = u_\gamma \otimes \lambda_\gamma$, where $E_\gamma := X \times \{ \gamma \}$, $u_\gamma$ is the unitary representation of $\Gamma$ on $L^2(X)$ associated with the induced action, and $\lambda_\gamma$ denotes the left regular representation. 
 
Throughout the rest of this notes $(G, \mu)$ denotes a pmp discrete groupoid wiht unit space $X$.
\section{Geometric approach to $L^2$-Betti numbers of pmp discrete groupoids} 

\subsection{Definitions} 

We adapt Gaboriau's definition of $L^2$-Betti numbers to arbitrary pmp discrete groupoids with necessary suitable modifications. This and the next subsections are rather self-contained.

\medskip
A {\it {\rm(}standard{\rm)} fiber space} over $(X,\mu)$ is defined to be a pair  which consists of a (standard) Borel space $U$ and a Borel map $\pi_U : U \to X$ with countable fibers, and it is usually denoted by $U$ for simplicity. We equip it with a natural measure $\mu_U$ on $U$ defined by $\mu_U(C) := \int_X\, \# (\pi_U^{-1}(\{x\}) \cap C)\, \mu(dx)$ for every Borel subset $C$ of $U$. Any pmp discrete groupoid $(G,\mu)$ produces two fiber spaces with its source and range maps $s, r$. A Borel subset $E$ of a standard fiber space $U$ is called a {\it Borel section} of $U$ if $\pi_U \upharpoonright_E$ is injective. Note that, by \cite[Theorem 18.10]{kechris:GTM156}, any fiber space is a countable disjoint union of its Borel sections. The {\it fiber product} of fiber spaces $U_1,\dots,U_n$ means the fiber space $U_1*\dots*U_n := \{(u_1,\dots,u_n) \in U_1\times\cdots\times U_n\,|\,\pi_{U_1}(u_1) = \cdots = \pi_{U_n}(u_n) \}$ with $\pi_{U_1*\cdots*U_n} : (u_1,\cdots,u_n) \in U_1*\cdots*U_n \mapsto \pi_{U_1}(u_1) = \cdots = \pi_{U_n}(u_n) \in X$. 

\medskip
Let $U$ be a fiber space over $(X,\mu)$. We regard $G$ as a standard fiber space with the source map $s$, and get the fiber product $G*U$. In this setup, a {\it left action} of $G$ on $U$ is defined to be a Borel map $(g,u) \in G * U \mapsto g\cdot u \in U$ satisfying the following conditions: (1) $\pi_U(g \cdot u) = r(g)$, (2) $\pi_U(u) \cdot u = u$ (where $\pi_U(u)$ is viewed as an element in $G$ since $X \subseteq G$), (3) $g \cdot (g^\prime \cdot u) = (g g^\prime) \cdot u$. We call such a fiber space with left action of $G$ a {\it {\rm(}standard left{\rm)} $G$-space}. The `groupoid product map' $(g_1,g_2) \in G*G \mapsto g_1 g_2 \in G$ is nothing but a left action of $G$ on the fiber space $r: G \to X$ so that $G$ itself is a $G$-space.  

\medskip
Let $U$ be a $G$-space. The left action of $G$ is said to be {\it essentially free} if $g \cdot u = u$ implies $g = \pi_U(u)$ for $\mu_U$-a.e.~$u$. A Borel subset $F$ of $U$ is called a {\it fundamental domain} for the action of $G$ if $\#((G\cdot u) \cap F) = 1$ holds for $\mu_U$-a.e.~$u$. Following Pichot's notion \cite{pichot} we say that a $G$-space $U$ is {\it quasi-periodic}, if the left action of $G$ is essentially free and has a fundamental domain. It is important below that $G$ itself becomes a quasi-periodic $G$-space with fundamental domain $X$. Note that if $G$ is principal or other words an equivalence relation, then any left action of $G$ must be essentially free. We may and do assume, by choosing smaller co-null subset if necessarily, that for any quasi-periodic $G$-space $U$, the $G$-action is precisely free and has an exact fundamental domain.

\medskip
The next lemma is crucial and the groupoid counterpart of \cite[Lemme 2.3]{gaboriau:betti}. 

\begin{lem} \label{lem:quasi-periodic}
 For any quasi-periodic $G$-space $U$, there exists a $G$-equivariant Borel injection from $U$ into a disjoint union $\bigsqcup_{i \in I} G = G\times I$ equipped with the left $G$-space structure as follows: its standard fiber space structure is given by the map $(g,i) \mapsto r(g)$ and its left action of $G$ is diagonal, i.e., $(g_1,(g_2,i)) \mapsto (g_1 g_2, i)$.
\end{lem}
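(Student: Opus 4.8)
The plan is to build the desired injection explicitly from a choice of fundamental domain, mimicking the strategy of \cite[Lemme 2.3]{gaboriau:betti} but replacing the group translations by the partially defined action of $G$. Since $U$ is quasi-periodic, I may assume (as allowed in the paragraph preceding the lemma) that the action is precisely free and that $F \subseteq U$ is an \emph{exact} fundamental domain, so that $\#((G\cdot u)\cap F) = 1$ for every $u$. Regarding $F$ itself as a fiber space via $\pi_U\!\upharpoonright_F$, I would first invoke \cite[Theorem 18.10]{kechris:GTM156} to decompose it into a countable disjoint union $F = \bigsqcup_{i\in I} F_i$ of Borel sections; this index set $I$ is the one appearing in the statement, and on each $F_i$ the map $\pi_U$ is injective.

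Next I would produce a Borel ``return cocycle''. Consider the Borel set $A := \{(g,u)\in G*U : g\cdot u \in F\}$. For each $u$ the section $A_u$ is contained in the countable set $s^{-1}(\pi_U(u))$, and exactness of $F$ together with precise freeness forces $A_u$ to be a single point: if $g_1\cdot u, g_2\cdot u$ both lie in $(G\cdot u)\cap F$ they coincide, whence $g_1 = g_2$ by freeness. Applying the Lusin--Novikov uniformization theorem (\cite[Theorem 18.10]{kechris:GTM156}) to the Borel set $A$ with countable sections yields a Borel map $u\mapsto g(u)$, defined a.e., with $g(u)\cdot u\in F$. Writing $\rho(u) := g(u)\cdot u$ for the representative and letting $i(u)$ be the unique index with $\rho(u)\in F_{i(u)}$ (a Borel choice since the $F_i$ are Borel), I define
\[
\Phi(u) := \bigl(g(u)^{-1},\, i(u)\bigr) \in G\times I.
\]

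It then remains to check the three required properties, all short formal verifications. Fiber-preservation holds because $r(g(u)^{-1}) = s(g(u)) = \pi_U(u)$. For injectivity, if $\Phi(u) = \Phi(u')$ then $g(u) = g(u')$ and $i(u) = i(u')$, so $\rho(u)$ and $\rho(u')$ lie in the common section $F_{i(u)}$ and satisfy $\pi_U(\rho(u)) = r(g(u)) = r(g(u')) = \pi_U(\rho(u'))$; injectivity of $\pi_U\!\upharpoonright_{F_{i(u)}}$ gives $\rho(u) = \rho(u')$, and then $u = g(u)^{-1}\cdot\rho(u) = g(u')^{-1}\cdot\rho(u') = u'$. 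For $G$-equivariance, given $h$ with $s(h) = \pi_U(u)$, the orbit of $h\cdot u$ equals that of $u$, so $\rho(h\cdot u) = \rho(u)$ and $i(h\cdot u) = i(u)$; comparing $g(h\cdot u)\cdot(h\cdot u) = (g(h\cdot u)\,h)\cdot u$ with $g(u)\cdot u = \rho(u)$ and using freeness yields $g(h\cdot u) = g(u)\,h^{-1}$, hence $\Phi(h\cdot u) = (h\,g(u)^{-1}, i(u)) = h\cdot\Phi(u)$.

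The genuinely delicate point, and the only place the groupoid structure intrudes beyond bookkeeping, is the Borel measurability of the return map $u\mapsto g(u)$: everything downstream is algebra. I expect the crux to be verifying that the section $A_u$ is a.e.\ a singleton, so that Lusin--Novikov produces one global Borel uniformizing function rather than countably many partial ones. This is exactly where exactness of the fundamental domain and precise freeness (rather than their merely essential versions) are used, and it is the technical ingredient that has to be supplied carefully in the groupoid framework, where $g\cdot u$ is defined only on the fiber product $G*U$.
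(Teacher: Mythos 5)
Your proof is correct and follows essentially the same route as the paper: both pass to an exact fundamental domain $F$, partition it into Borel sections $F_i$, use exactness plus precise freeness to see that each $u$ has a unique $g(u)$ with $g(u)\cdot u\in F$, and send $u\mapsto\bigl(g(u)^{-1},i(u)\bigr)$, which is exactly the paper's map $f_i : G\cdot F_i \to G\cdot X_i$, $g\cdot u\mapsto g$. The only difference is in how Borelness is certified: you apply Lusin--Novikov uniformization globally to the set $\{(g,u)\in G*U : g\cdot u\in F\}$ to get the return map directly, whereas the paper constructs the Borel injections $g\mapsto g\cdot(\pi_U\!\upharpoonright_{F_i})^{-1}(s(g))$ with image $G\cdot F_i$ and inverts them piecewise via Lusin--Souslin (\cite[Corollary 15.2]{kechris:GTM156}); both justifications are valid.
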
 
\begin{proof}
As we remarked above, we may assume that the action of $G$ on $U$ is exactly free and has an exact fundamental domain. Let $F$ be an exact fundamental domain for the left action of $G$ on $U$. Since $\pi_U \!\upharpoonright_F : F \to X$ is a countable to one Borel map, by \cite[Theorem 18.10]{kechris:GTM156} there exists a countable Borel partition $\{ F_i \}_{i \in I}$ of $F$ such that each $\pi_U\!\upharpoonright_{F_i}$ is injective. 
Then we have $U = G \cdot F = \bigsqcup_{i \in I} G \cdot F_i$. Indeed, the first equality follows from the fact that $F$ is an exact fundamental domain and the second is due to the freeness of the action. 
Note that, by\cite[Corollary 15.2]{kechris:GTM156}, the map $\pi_U\!\upharpoonright_{F_i} : F_i \to X_i := \pi_U(F_i)$ is a Borel isomorphism so that we have a Borel injection $G \cdot X_i \to U : g \mapsto g \cdot (\pi_U \upharpoonright_{F_i})^{-1} (s(g))$ whose image is $G \cdot F_i$. Thus, by \cite[Corollary 15.2]{kechris:GTM156}, $G \cdot F_i$ is Borel and $f_i : G \cdot F_i \to G \cdot X_i : g \cdot u \mapsto g$ is an Borel isomorphism. Therefore, the desired injection $f : U \to \bigsqcup_{i \in I} G \cdot X_i$ is defined to be $f \upharpoonright_{G \cdot F_i} := f_i$, $i \in I$. 
\end{proof}

For any fiber space $U$ over $(X,\mu)$, the symbol $\Gamma(U)$ denotes the space of Borel functions $f : U \to \mathbb{C}$ such that 
$S(f)(x) := \# (\pi_U^{-1}(\{x\}) \cap \supp (f))$ is finite for $\mu$-a.e.~$x$, where $\supp(f):=\{u \in U\,|\,f(u)\neq0\}$. We also define $\Gamma^b(U)$ to be the space of $f \in \Gamma(U) \cap L^\infty(U)$ such that $S(f) \in L^\infty(X)$, and set $\Gamma^{(2)}(U) := L^2(U,\mu_U)$. Note that every function on $U$ is the sum of functions each of which is of the form $(\xi \circ \pi_U) \mathbbm{1}_E$; here $\xi$ is a measurable function on $X$ and $E$ is a Borel section of $U$. In the following the symbol $\Gamma^\star(U)$ denotes the one of $\Gamma(U)$, $\Gamma^b(U)$ and $\Gamma^{(2)}(U)$. 

\medskip
Let $U$ be a $G$-space. Then $\Gamma^\star(U)$ have the following natural left $\mathbb{C}[G]$-module structure:
$$
(f \varphi)(u) := \sum_{g \in r^{-1}(\{\pi(u)\})} f(g)\varphi(g^{-1} \cdot u)
$$ 
for $f \in \mathbb{C}[G]$ and $\varphi \in \Gamma^\star(U)$. If $U$ is quasi-periodic, then $\Gamma^{(2)}(U)$ becomes a Hilbert $L(G)$-module whose {\it Murray-von Neumann dimension} (see \cite[\S 1.1]{luck:survey}) equals the measure of a fundamental domain of $U$. Indeed, since we may assume that $U = \bigsqcup_{i \geq 1} G \cdot X_i$ (see the proof of Lemma \ref{lem:quasi-periodic}), we have $\Gamma^{(2)}(U) = \sideset{}{^\oplus_{i\geq1}}\sum L^2(G) \mathbbm{1}_{X_i}$. Here, note that $(\xi \mathbbm{1}_{X_i}) (g) := \sum_{g_1 g_2} \xi(g_1) \mathbbm{1}_{X_i}(g_2)$ ( i.e., the right action of $\mathbbm{1}_{X_i}$), which defines the projection $\xi \mapsto \xi \mathbbm{1}_{X_i}$ in the commutant $L(G)^\prime$. Thus we conclude that $\Gamma^{(2)}(U)$ is a Hilbert $L(G)$-module and that $\dim_{L(G)} \Gamma^{(2)}(U) = \sum_{i \geq 1} \mu(X_i)$, which equals the measure of a fundamental domain.

\medskip
For a $G$-space $U$, any fiber product $U*\cdots*U$ becomes again a $G$-space endowed with the diagonal action of $G$: $(g,(u_1,\dots,u_n)) \mapsto (gu_1,\dots,gu_n)$. A {\it simplicial $G$-complex} is defined to be a sequence $\Sigma = (\Sigma^{(n)})_{n \geq 0}$ of quasi-periodic $G$-spaces such that each $\Sigma^{(n)}$ is a $G$-invariant Borel subset of the $n+1$ times fiber product of $\Sigma^{(0)}$  with the restriction to $\Sigma^{(n)}$ of the left action of $G$ on the fiber product, and moreover such that the following conditions hold: 
\begin{enumerate}
\item if $(v_0, \dots, v_n) \in \Sigma^{(n)}$, then $(v_{\sigma(0)}, \dots, v_{\sigma(n)}) \in \Sigma^{(n)}$ for any permutation $\sigma$;
\item if $(v_0, \dots, v_n) \in \Sigma^{(n)}$, then $v_0 \neq v_1$;
\item if $s = (v_0, \dots, v_n) \in \Sigma^{(n)}$, then $\partial_n^j s := (v_0, \dots, \hat{v_j}, \dots, v_n) \in \Sigma^{(n-1)}$ for every $0 \leq j \leq n$, where $\hat{v_j}$ means the removal of $v_i$ from the sequence $(v_0, \dots, v_n)$.
\end{enumerate}
Note that the maps $\partial_n^j : \Sigma^{(n)} \to \Sigma^{(n-1)}$ are measurable. The fiber of $\pi_{\Sigma^{(n)}} : \Sigma^{(n)} \to X$ at $x$ is denoted by $\Sigma^{(n)}_x$. Then,  $\Sigma_x := (\Sigma^{(n)}_x)_{n \geq 0}$ becomes a usual simplicial complex; see \cite[Chapter 3]{spanier} for usual notation on simplicial complexes. We say that $\Sigma$ is {\it contractible} if so is $\Sigma_x$ $\mu$-a.e.~$x$. Similarly, we say that $\Sigma$ is {\it connected} if so is $\Sigma_x$ $\mu$-a.e.~$x$. A simplicial $G$-complex $\Sigma$ is said to be {\it uniformly locally bounded} (ULB for short) if $\Sigma^{(0)}$ has a fundamental domain of finite measure and there exists an integer $N$ such that $\# \{ s \in \Sigma_x | v \in s \} \leq N$ holds for every $v \in \Sigma^{(0)}_x$ and for $\mu$-a.e.$x$. In the case, every $\Sigma^{(n)}$ has a fundamental domain of finite measure. Indeed, if $F$ is a fundamental domain of $\Sigma^{(0)}$, then $F^{(n)} := \{ (v_0, \dots, v_n) \in \Sigma^{(n)} \, | \, v_0 \in F \}$ is a fundamental domain of $\Sigma^{(n)}$ satisfying $\mu_{\Sigma^{(n)}}(F^{(n)}) \leq N \mu_{\Sigma^{(0)}}(F) < \infty$.

\medskip
The {\it universal $G$-complex} $E G = (E G^{(n)})_{n \geq 0}$ plays an important r\^{o}le, and thus we do give its precise definition in what follows. Set $E G^{(0)} := \bigsqcup_{i \in \mathbb{N}} G = G\times\mathbb{N}$, which becomes a $G$-space with the diagonal action, see Lemma \ref{lem:quasi-periodic}. For $n \geq 1$, define $E G^{(n)}$ to be the set of $(n + 1)$-tuples $(v_0, \dots, v_n) \in EG^{(0)} * \dots * EG^{(0)}$ whose entries are distinct. Since $G$ itself is a quasi-periodic $G$-space with fundamental domain $X$ mentioned before, $EG^{(0)}$ is again a quasi-periodic $G$-space with fundamental domain $\bigsqcup_i X$ which is of infinite measure. Hence $EG$ is a contractible, simplicial $G$-complex, but infinite dimensional and far from being ULB. 

\medskip 
Let $\Sigma$ be a simplicial $G$-complex. A {\it $G$-subcomplex} of $\Sigma$ is defined to be a simplicial $G$-complex $\Xi$ such that each $\Xi^{(n)}$ is a $G$-invariant subset of $\Sigma^{(n)}$ with the restriction to $\Xi^{(n)}$ of the original left action of $G$. A sequence $(\Xi_i)_{i \geq 1}$ of $G$-subcomplexes is called an {\it exhaustion} of $\Sigma$ if $(\Xi^{(n)}_{i, x})_{i \geq 1}$ are increasing subsets of $\Sigma^{(n)}_x$ satisfying $\bigcup_{i \geq 1} \Xi_{i, x} = \Sigma^{(n)}_x$ for $\mu$-a.e.~$x$. An exhaustion $(\Xi_i)_{i \geq 1}$ is said to be ULB if each $\Xi_i$ is ULB. We will prove the existence of ULB exhaustions for any simplicial $G$-complex in the next subsection.

\medskip
For a simplicial $G$-complex $\Sigma$, let $C_n^\star (\Sigma)$ (an analogous notation as $\Gamma^\star(\Sigma)$ before) denote the subspace of $\Gamma^\star (\Sigma^{(n)})$ which consists of functions $f : \Sigma^{(n)} \to \mathbb{C}$ satisfying $f(\sigma^{-1} u) = (\mathrm{sgn} \sigma) f(u)$ for every $u \in \Sigma^{(n)}$ and every permutation $\sigma$. For $f \in C_n^\star(\Sigma)$ and $x \in X$, let $f_x$ denote the restriction of $f$ to $\Sigma_x^{(n)}$. 

The family $\{ \partial_{n, x} \}_{x \in X}$ of boundary operators on each $\Sigma^{(n)}_x$ defines a $\mathbb{C} [G]$-module map $\partial_n : C_n(\Sigma) \to C_{n -1}(\Sigma)$ as follows: for $f \in C_n(\Sigma)$, define the function $\partial_n f : \Sigma^{(n-1)} \to \mathbb{C}$ by $(\partial_n f) (u) = \partial_{n, x}(f_x)$ for $u \in \Sigma_x^{(n)}$. Then, $\partial_n f$ is measurable. Indeed, if $f = (\xi \circ \pi_{\Sigma^{(n)}}) \mathbbm{1}_E$ is supported on a Borel section $E$ of $\Sigma^{(n)}$, then we have $\partial_n f = (\xi \circ \pi_{\Sigma^{(n)}}) \sum_{j=0}^n (-1)^j \mathbbm{1}_{\partial_n^j E}$, which is clearly measurable. Thus, we get a chain complex $C_\bullet(\Sigma)$ of $\mathbb{C} [G]$-modules. 

If $\Sigma$ is ULB, then we can extend the $\partial_n$ to a unique bounded $L(G)$-module map $\partial_n^{(2)} : C_n^{(2)}(\Sigma) \to C_{n - 1}^{(2)}(\Sigma)$. Indeed, let $N$ be a constant so that $\# \{ s \in \Sigma_x\, | \, v \in s \} \leq N$ holds for $\mu$-a.e.~$x$ and every $v \in \Sigma^{(0)}_x$. Then, using the formula $(\partial_n f)_x (t) = \sum_{j = 0}^n (-1)^j \sum_{s \in (\partial_n^j)^{-1} (t)} f(s)$ and the Cauchy-Schwarz inequality, we get an estimate $\| \partial_n f \| \leq n \sqrt{N} \| f \|$ for every $f \in C_n^{(2)}(\Sigma) \cap C_n(\Sigma)$. Thus, we get a Hilbert $L(G)$-chain complex $C_\bullet^{(2)}(\Sigma)$; see \cite[\S 1.1]{luck:survey} for the terminology of Hilbert chain complexes.  

\medskip
We are ready to give the definition of $L^2$-Betti numbers of a simplicial $G$-complex. 

\begin{df}\label{geometric-def}
For a ULB simplicial $G$-complex $\Sigma$, define the {\it $n$-th reduced $L^2$-homology} of $\Sigma$ by 
\begin{equation}\label{c}
\overline{H}_n^{(2)}(\Sigma, G) 
:= H_n^{(2)}(C_\bullet^{(2)}(\Sigma))
 =  \ker\partial_n^{(2)}/\,\overline{\mathrm{im}\partial_{n + 1}^{(2)}}. 
\end{equation}
Here notice that $\overline{H}_n^{(2)}(\Sigma, G)$ becomes a Hilbert space, since we have taken the closure of $\mathrm{im}\partial_{n+1}^{(2)}$.

For an arbitrary simplicial $G$-complex $\Sigma$, we take a ULB {\it exhaustion} $\{ \Sigma_i \}_{i \geq 1}$ (possiblly with all $\Sigma_i = \Sigma$). Remark here that, for every $i \leq j$, the inclusion $\Sigma_i \subset \Sigma_j$ induces the natural bounded $L(G)$-map $J^{i,j}_{n} : C_n^{(2)}(\Sigma_i) \to C_n(\Sigma_j)$ for every $n \geq 0$ in the following manner: $J^{i, j}_n(f)(u)$ is defined to be $f(u)$ if $u \in \Sigma_i^{(n)}$ and $0$ otherwise. The maps $J^{i, j}_n$ commute with the boundary maps $\partial_n^{(2)}$, that is, $J^{i, j}_\bullet$ is a chain morphism from $C_\bullet^{(2)}(\Sigma_i)$ to $C_\bullet^{(2)}(\Sigma_j)$. Let $H_n^{(2)}(J^{i, j}_\bullet) : H_n^{(2)}(C_\bullet^{(2)}(\Sigma_i)) \to H_n^{(2)}(C_\bullet^{(2)}(\Sigma_j))$ be the natural map induced from the chain morphism $J^{i, j}_\bullet$. 
With $\nabla_n (\Sigma_i, \Sigma_j) := \dim_{L(G)} \overline{\im H_n^{(2)}(J^{i, j}_\bullet) }$, we define the {\it $n$-th $L^2$-Betti number} of $\Sigma$ by 
\begin{equation}\label{d}
\beta_n^{(2)}(\Sigma,\{\Sigma_i\}_{i\geq1},G) = \lim_{i \geq 1} \lim_{j \geq i} \nabla_n (\Sigma_i, \Sigma_j).
\end{equation} 
\end{df}

\begin{rem}\label{rem:double-limit} Let $\{ \Sigma_i \}_{i \geq 1}$ be an increasing sequence of ULB simplicial $G$-complex. Then, the function $\nabla_n (\Sigma_i, \Sigma_j)$ is increasing in $i$ and decreasing in $j$. In particular, the double limit in \eqref{d} exists. 
\end{rem}
\begin{proof}
Take $i \leq j \leq k$ arbitrary. Since the maps $H_n^{(2)}(J^{i, j}_\bullet)$ are induced from inclusion, the equality $H_n^{(2)}(J^{i, k}_\bullet) = H_n^{(2)}(J^{j, k}_\bullet) \circ H_n^{(2)}(J^{i, j}_\bullet)$ holds. Thus the map $H_n^{(2)}(J^{j, k}_\bullet)$ is a surjection from $\overline{\im H_n^{(2)}(J^{i, j}_\bullet)}$ to $\overline{\im H_n^{(2)}(J^{i, k}_\bullet)}$. Hence, by the additivity of von Neumann dimension (see \cite[Theorem 1.12 (3)]{luck:survey}), we have $\nabla_n(\Sigma_i, \Sigma_j) \geq \nabla_n(\Sigma_i, \Sigma_k)$. 
\end{proof}

It is not clear at all whether or not  the above definition of $\beta_n^{(2)}(\Sigma,\{\Sigma_i\}_{i\geq1},G)$ is independent of the choice of ULB-exhausion $\{\Sigma_i\}_{i\geq1}$. This issue will be resolved (see Proposition \ref{mainthm1}) in the course of proving the equivalence between the algebraic and the geometric approaches in \S\S 3.3.  

\subsection{A construction of ULB exhaustions} 
We prove the following proposition: 

\begin{prop}\label{exhaustion} The universal $G$-complex $EG$ has a ULB exhaustion, and hence so does any $G$-complex. 
\end{prop}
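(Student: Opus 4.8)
The plan is to first reduce the statement to the universal $G$-complex $EG$, since the final clause ``and hence so does any $G$-complex'' should follow by intersecting a ULB exhaustion of $EG$ with the given $G$-complex $\Sigma$. More precisely, once we have a ULB exhaustion $(\Xi_i)_{i\ge1}$ of $EG$, for an arbitrary simplicial $G$-complex $\Sigma$ we would like to produce an exhaustion of $\Sigma$ whose members are ULB. The natural move is to embed $\Sigma$ into $EG$ (or into some $G\times I$) using Lemma \ref{lem:quasi-periodic} applied to $\Sigma^{(0)}$, and then pull back or intersect the $\Xi_i$ to get an exhaustion of $\Sigma$ that inherits the ULB property from that of the $\Xi_i$; the finite-measure-fundamental-domain and local-boundedness bounds pass to $G$-invariant subsets, so this reduction step should be routine once the embedding is set up carefully.

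For the core assertion that $EG$ itself admits a ULB exhaustion, I would build the $\Xi_i$ by hand as a nested sequence of finite-dimensional, locally bounded $G$-subcomplexes. Recall $EG^{(0)}=\bigsqcup_{k\in\mathbb N}G=G\times\mathbb N$, with fundamental domain $\bigsqcup_k X$ of infinite measure, and $EG^{(n)}$ consists of $(n+1)$-tuples of distinct vertices. To cut this down, I would truncate in three directions simultaneously: bound the dimension by $i$ (i.e.~set $\Xi_i^{(n)}=\varnothing$ for $n>i$), bound the ``vertex index'' by keeping only those tuples all of whose entries lie in the first $i$ copies $G\times\{0,\dots,i-1\}$, and bound the size of $G$-translates entering each simplex by restricting the groupoid elements involved. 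Concretely, I would fix an increasing sequence of one-sheeted sets (or a countable generating family $\{E_k\}\subset\mathcal G_G$) exhausting $G$, and let $\Xi_i^{(0)}$ consist of those $(g,k)$ with $k<i$ and $g$ lying in the union of the first $i$ of these one-sheeted pieces over each unit, declaring $\Xi_i^{(n)}$ to be the tuples from $\Xi_i^{(0)}$ with distinct entries that survive in $EG^{(n)}$. I would then verify the three simplicial axioms (permutation invariance, $v_0\neq v_1$, and closedness under the face maps $\partial_n^j$) for each $\Xi_i$, check $G$-invariance of the construction, and confirm that $\bigcup_i\Xi_{i,x}^{(n)}=EG_x^{(n)}$ fiberwise $\mu$-a.e.

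The ULB verification is where the real content lies. For each $\Xi_i$ I must exhibit a fundamental domain of finite measure for $\Xi_i^{(0)}$ and a uniform bound $N=N_i$ on $\#\{s\in\Xi_{i,x}\mid v\in s\}$. The finite-measure fundamental domain should come from the index truncation: restricting to the first $i$ copies and to finitely many one-sheeted pieces yields a fundamental domain that is a finite union of pieces each of measure at most $\mu(X)=1$, hence of measure $\le i\cdot(\text{something finite})$. The local boundedness should follow from the dimension cap $n\le i$ together with the bounded number of vertices reachable from a given vertex within the truncated groupoid: since only finitely many one-sheeted pieces and finitely many copies are allowed, each fiber $\Xi_{i,x}^{(0)}$ has boundedly many vertices adjacent to a fixed $v$, and with dimension bounded by $i$ the count of simplices containing $v$ is bounded by a binomial-type expression in these finite quantities.

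I expect the main obstacle to be getting the local-boundedness bound $N_i$ to be genuinely \emph{uniform} over $\mu$-a.e.~$x$, rather than merely finite on each fiber. The subtlety is that the fibers $G_x$ need not be uniformly finite even after truncating by finitely many one-sheeted sets, unless those pieces are chosen so that the relevant range/source multiplicities are essentially bounded; this is exactly the role of the defining bound $x\mapsto\#(s^{-1}(x)\cap\supp f)$ in $\mathbb C[G]$, which I would need to import to control how many groupoid elements in a truncated family can share a common unit. Handling the measurability of the whole construction (Borel selection of the exhausting one-sheeted sets, and Borel dependence of the face maps and of the bounds on $x$) will require invoking the countable-to-one selection results (\cite[Theorems 15.1, 15.2, 18.10]{kechris:GTM156}) already used in Lemma \ref{lem:quasi-periodic}, but this is bookkeeping rather than a conceptual difficulty.
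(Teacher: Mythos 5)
Your reduction step (embed an arbitrary $\Sigma$ into $EG$ via Lemma \ref{lem:quasi-periodic} and intersect with a ULB exhaustion of $EG$) and your truncation of the copy-index $k<i$ both match the paper, which runs the same scheme via the subcomplexes $(EG)_N$ and a diagonal argument. But the core of your construction for $EG$ has a genuine gap: you truncate the \emph{vertices} themselves, keeping only $(g,k)$ with $g$ in the union $\tilde E_i$ of the first $i$ one-sheeted pieces. That set is not $G$-invariant: the action is $g'\cdot(g,k)=(g'g,k)$, and one-sheeted sets are not stable under left multiplication (in a transformation groupoid $X\rtimes\Gamma$ with $E_j=X\times\{\gamma_j\}$, left multiplication by $(x,\gamma)$ moves $X\times\{\gamma_j\}$ to $X\times\{\gamma\gamma_j\}$). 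So your $\Xi_i$ are not $G$-subcomplexes, and the step you defer as a routine check (``check $G$-invariance of the construction'') is exactly where the argument breaks. Nor can this be repaired by saturating: the $G$-saturation of your vertex set is essentially all of $G\times\{0,\dots,i-1\}$, which destroys local boundedness.

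The missing idea is that the truncation must be imposed on \emph{relative} positions, not absolute ones. The paper keeps the full vertex set $(EG)_N^{(0)}$ and defines $\Sigma_k^{(n)}$ to consist of the tuples $((g_0,i_0),\dots,(g_n,i_n))$ whose successive differences satisfy $g_{j+1}^{-1}g_j\in\tilde E_k\tilde E_k^{-1}$; this condition is manifestly left-invariant, so $G$-invariance is automatic. Local boundedness then comes for free from one-sheetedness: writing $g_j=g_0h_0h_j^{-1}$ with $h_j\in\tilde E_k$ having a common source, each $E_j$ contributes at most one element with a prescribed source or range, so the number of simplices containing a fixed vertex is bounded by a universal constant depending only on $k$ and $N$ — uniformly in $x$. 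In particular, your concern that one must import an essential bound on the multiplicities $x\mapsto\#(s^{-1}(x)\cap\supp f)$ is unfounded; no such hypothesis is needed once the truncation is by relative position, and this is precisely what your vertex-based truncation cannot deliver.
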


For every $N \geq 1$, define the $G$-subcomplex $(EG)_N$ of $E G$ in the following manner: Set $(EG)_N^{(0)} = \bigsqcup_{i=1}^N G = G\times\{1,\dots,N\}$ that naturally sits in $EG^{(0)}$. For $n \geq 1$, define $(EG)_N^{(n)}$ to be the set of $(n + 1)$-tuples $(v_0, \dots, v_n) \in (EG)_N^{(0)} * \cdots * (EG)_N^{(0)}$ whose entries are distinct. 

\begin{lem} The $G$-complex $(EG)_N$ has a ULB exhaustion for every $N \geq 1$.
\end{lem}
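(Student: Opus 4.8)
The plan is to construct the exhaustion explicitly from a decomposition of $G$ into one-sheeted sets, exploiting the fact that such a decomposition controls the fibres. First I fix a decomposition $G = \bigsqcup_{k \geq 1} E_k$ into one-sheeted sets $E_k \in \mathcal{G}_G$ with $E_1 = X$, and put $G_m := \bigcup_{k=1}^{m} E_k$, so that $(G_m)_{m \geq 1}$ is an increasing sequence of Borel subsets of $G$ with $X \subseteq G_1$ and $\bigcup_{m} G_m = G$. The point is that $G_m$ is a union of $m$ one-sheeted sets, so its intersection $G_m \cap r^{-1}(\{y\})$ with any range fibre has at most $m$ elements; this is the sole source of the uniform local bound below.

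Next I define the subcomplexes. Writing a vertex of $(EG)_N^{(0)} = G \times \{1,\dots,N\}$ as $(g,i)$, let $\Xi_m$ be the subfamily of $(EG)_N$ whose $n$-simplices are those $((g_0,i_0),\dots,(g_n,i_n)) \in (EG)_N^{(n)}$ with $g_j^{-1} g_k \in G_m$ for all $0 \leq j,k \leq n$. Each $\Xi_m^{(n)}$ is Borel, since the groupoid operations and $G_m$ are. The decisive algebraic fact is that $g_j^{-1} g_k$ is unchanged by the diagonal left action $(g_\ell,i_\ell) \mapsto (h g_\ell, i_\ell)$, so $\Xi_m^{(n)}$ is $G$-invariant; imposing the condition for all ordered pairs $(j,k)$ makes it invariant under permutations of the vertices, and discarding a vertex only removes constraints, so $\Xi_m$ is closed under faces. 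Thus each $\Xi_m$ is a genuine $G$-subcomplex, quasi-periodicity of $\Xi_m^{(n)}$ being inherited from $(EG)_N^{(n)}$ by intersecting a fundamental domain with the $G$-invariant set $\Xi_m^{(n)}$.

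It remains to verify the two defining properties of a ULB exhaustion. For the ULB condition, $\Xi_m^{(0)} = (EG)_N^{(0)}$ has the fundamental domain $X \times \{1,\dots,N\}$ of finite measure $N$, and for a vertex $v = (g_0,i_0)$ over $x = r(g_0)$ every simplex of $\Xi_m$ containing $v$ has its remaining vertices among the $(g,i)$ with $g_0^{-1} g \in G_m$; as $G_m \cap r^{-1}(\{s(g_0)\})$ has at most $m$ elements, there are at most $mN$ such vertices, hence at most $2^{mN}$ simplices of $\Xi_m$ through $v$, a uniform bound independent of $x$. For the exhaustion property, a simplex $((g_0,i_0),\dots,(g_n,i_n))$ of $(EG)_N$ involves only finitely many elements $g_j^{-1} g_k$ of $G$, each lying in some $G_{m(j,k)}$, so it belongs to $\Xi_m$ once $m$ exceeds all these indices; therefore the increasing family $(\Xi_m)_{m \geq 1}$ exhausts $(EG)_N$ fibrewise, as required.

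The main obstacle I anticipate is the tension that each $\Xi_m$ must simultaneously be globally $G$-invariant and locally finite with a \emph{uniform} bound, and these requirements pull against each other, since $G$-invariance forbids simply truncating the infinite fibres of $(EG)_N^{(0)}$. The resolution, which is the crux of the argument, is to cut not by any absolute size of the $g_j$ but by the left-invariant relative positions $g_j^{-1} g_k$, and to read off the uniform bound from the number of sheets of $G_m$ via one-sheetedness of the $E_k$. Once this device is in place, the measurability, face, symmetry, and quasi-periodicity checks are routine.
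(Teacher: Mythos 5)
Your construction is correct, but it is genuinely different from the paper's, and the difference is worth recording. The paper starts from the same data (your $G_m$ is exactly its $\tilde{E}_m = \bigsqcup_{i\le m}E_i$), but its $m$-th subcomplex is defined by requiring only the \emph{consecutive} differences $g_{j+1}^{-1}g_j$ to lie in the \emph{product set} $\tilde{E}_m\tilde{E}_m^{-1}$. That choice forces an intermediate remark — that one can find a chain $h_0,\dots,h_n\in\tilde{E}_m$ with $g_{j+1}^{-1}g_j=h_{j+1}h_j^{-1}$, whence \emph{all} pairwise differences lie in $\tilde{E}_m\tilde{E}_m^{-1}$ — which is needed both for permutation invariance (i.e.\ to see one really gets a simplicial $G$-subcomplex) and for the ULB count, which then parametrizes simplices through a fixed vertex by tuples of distinct elements of $\tilde{E}_m$ and yields the bound $\sum_{n=0}^{m-1}N^n\,m(m-1)\cdots(m-n)$. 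You instead constrain \emph{all pairwise} differences $g_j^{-1}g_k$ to lie in the plain union $G_m$ itself. This buys three simplifications: permutation invariance and closure under faces are immediate from the symmetry of the defining condition; the local bound follows in one line from one-sheetedness ($G_m$ meets the fibre $r^{-1}(\{s(g_0)\})$ in at most $m$ points, so at most $mN$ candidate vertices accompany $(g_0,i_0)$); and the exhaustion property is immediate because $\bigcup_m G_m=G$ eventually captures each of the finitely many pairwise differences of a given simplex. The paper's product-set device is an artifact of controlling only consecutive differences and buys nothing extra here, so your route is, if anything, the cleaner one; in particular it sidesteps the compatibility-of-chains point on which the paper's remark leans. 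One small correction: your bound $2^{mN}$ counts \emph{unordered} simplices, but in this formalism $\Sigma^{(n)}_x$ consists of ordered $(n+1)$-tuples of distinct vertices (closed under permutations), so the number of simplices through $v$ is bounded by the number of finite sequences of distinct elements drawn from a pool of at most $mN$ vertices and containing $v$ — a factorial-type constant rather than $2^{mN}$. Since only the existence of a bound uniform in $x$ and $v$ matters, this changes nothing in the argument. Likewise your normalization $E_1=X$ (or, alternatively, restricting the pairwise condition to $j\neq k$) is genuinely needed, since the condition at $j=k$ asks the units to lie in $G_m$; you did stipulate it, so this is fine.
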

\begin{proof} 
Fix $N \geq 1$. Let $G = \bigsqcup_{i \geq 1}  E_i$ be a decomposition into a countable family of one-sheeted sets; see \S2. For $k \geq 1$, we set $\tilde{E}_k = \bigsqcup_{i = 1}^k E_i$ and define $\Sigma_k = (\Sigma_k^{(n)})_{n \geq 0}$ in the following way: set $\Sigma_k^{(0)} := (EG)_N^{(0)}$; for $n \geq 1$ let $\Sigma_k^{(n)}$ be the set of $((g_0,i_0),\dots,(g_n,i_n)) \in (EG)_N^{(n)}$ such that $g_{j + 1}^{-1} g_j \in \tilde{E}_k \tilde{E}_k^{-1}$ holds for every $0 \leq j \leq n - 1$. 

We show that the sequence $(\Sigma_k)_{k \geq 1}$ is a ULB exhaustion of $(EG)_N$. 

Remark that, if $( (g_0, i_0), \dots, (g_n, i_n) ) \in (\Sigma_k)^{(n)}$, then $g_j^{-1} g_{j^\prime} \in \tilde{E}_k \tilde{E}_k^{-1}$ holds for every $j \neq j^\prime$. Indeed, by the definition of $(\Sigma_k)^{(n)}$, there exist $h_0, \dots, h_n \in \tilde{E}_k$ so that $g_{j + 1}^{-1} g_j = h_{j + 1} h_j^{-1}$ holds for every $0 \leq j \leq n - 1$. Thus, for $0 \leq j < j^\prime \leq n$, we have $g_{j^\prime}^{-1} g_j = g_{j^\prime} g_{j^\prime - 1} g_{j^\prime - 1}^{-1} g_{j^\prime - 2} \cdots g_{j + 1}^{-1} g_j = h_{j^\prime} h_{j^\prime - 1}^{-1} h_{j^\prime - 1} h_{j^\prime - 2}^{-1} \cdots h_{j + 1} h_j^{-1} = h_{j^\prime} h_j^{-1} \in \tilde{E}_k \tilde{E}_k^{-1}$. We also have $g_j^{-1} g_{j^\prime} = (g_{j^\prime}^{-1} g_j )^{-1} = h_j h_{j^\prime}^{-1} \in \tilde{E}_k \tilde{E}_k^{-1}$ by taking their inverses.

In what follows, we divide the proof into three steps.

\medskip
({\bf Step 1}: Each $\Sigma_k$ is a $G$-subcomplex of $(EG)_N$.) For any $g \in G$, we have $g \cdot s = ((g g_0, i_0), \dots, (g g_n, i_n))$ and $(g g_{j + 1})^{-1} (g g_j) = g_{j + 1}^{-1} g_j \in \tilde{E}_k \tilde{E}_k^{-1}$ for every $0 \leq j \leq n - 1$. Thus, each $\Sigma_k^{(n)}$ is a $G$-invariant subset of $(EG)_N^{(n)}$. Also, by the above remark, each $\Sigma_k$ is clearly a simplicial $G$-complex. Thus $\Sigma_k$ is a $G$-subcomplex.

\medskip
({\bf Step 2}: Each $\Sigma_k$ is ULB.) Take $x \in X$ and $(g_0, i_0) \in (\Sigma_k)_x^{(0)}$. We show that the number of elements $s \in (\Sigma_k)_x$ containing $(g_0, i_0)$ as the first component is not larger  than a universal constant ( i.e., it is independent of the choice of $x$ and $(g_0, i_0)$). 

Choose $s = ((g_0, i_0), \dots, (g_n, i_n)) \in (\Sigma_k)_x^{(n)}$. Then, by the definition of $\Sigma_k^{(n)}$, there exist $h_0, \dots, h_n \in \tilde{E}_k$ so that $g_j^{-1} g_{j^\prime} = h_j h_{j^\prime}^{-1}$ for every $j \neq j^\prime$. Thus, $h_j = h_{j^\prime}$ implies that $g_j^{-1} g_{j^\prime}$ falls in the unit space, and hence $g_j = g_{j^\prime}$, a contradiction by the definition of $(EG)_N^{(n)}$. Therefore, $h_0, \dots, h_n$ must be different. Also, we have $g_j = g_0 g_0^{-1} g_j = g_0 h_0 h_j^{-1}$ for every $0 \leq j \leq n$. 

Define $H_{n, x, g_0}$ to be the set of $(h_0, \dots, h_n) \in \tilde{E}_k \times \cdots \times \tilde{E}_k$ satisfying the following conditions: (1) $h_0, \dots, h_n$ are different; (2) $r(h_0) = s(g_0)$; (3) $s(h_j) = s(h_0)$ for every $0 \leq j \leq n$. Then, by what we have proved in the previous paragraph, the image of the map 
$$
H_{n, x, g_0} \times \{ 1, \dots, N \}^n \to (\Sigma_k)_x^{(n)} : ( ( h_0, \dots, h_n), (i_1, \dots, i_n)) \mapsto ( (g_0 h_0 h_j^{-1}, i_j)_{j=0}^n )
$$
is equal to $\{ s \in (\Sigma_k)_x^{(n)}\, | \, (g_0, i_0) \in s \}$. Therefore, we have $\# \{ s \in (\Sigma_k)_x \, | \, (g_0, i_0) \in s\} \leq \sum_{n = 0}^\infty N^n \times \# H_{n, x, g_0}$. 

We give an estimate of $ \# H_{n, x, g_0}$ from the above. Take $(h_0, \dots, h_n) \in H_{n, x, g_0}$. By the definition of  $H_{n, x, g_0}$, we see that $h_0 \in \bigsqcup_{j = 1}^k E_j \cap r^{-1}(s( g_0))$.Since each $E_j$ is a one-sheeted set, we have $\# (E_j \cap r^{-1}(s(g_0))) \leq 1$ for every $1 \leq j \leq k$. Thus, the number of choice for $h_0$ is not larger than $k$. Without loss of generality, we may assume that $h_0 \in E_1 \cap r^{-1}(s(g_0))$. Then, by the definition of $H_{n, x, g_0}$, we have $h_1, \dots, h_n \in \bigsqcup_{j = 2}^k E_j \cap s^{-1}(s(h_0))$. Let $j_l$ denote the index so that $h_l \in E_{j_l} \cap s^{-1}(s(h_0))$ for every $1 \leq l \leq n$. Then, since $h_1, \dots, h_n$ are different and each $E_j$ is one-sheeted, $j_1, \dots, j_n$ must be different. Since $\# (E_j \cap s^{-1}(s(h_0))) \leq 1$ for every $2\leq j \leq k$, the number of choices for $(h_1, \dots, h_n)$ is not larger than the number of sequences $(j_1, \dots, j_n)$ which consists of different elements of $\{ 2, \dots, k\}$. Hence, $\# H_{n, x, g_0} \leq k (k -1) \cdots (k - n)$ if $n \leq k - 1$. Clearly $H_{n , x, g_0} = \emptyset$ if $n \geq k$. 

Therefore, we conclude that $\# \{ s \in (\Sigma_k)_x \, | \, (g_0, i_0) \in s\} \leq \sum_{n=0}^{k - 1} N^n k (k -1) \cdots (k - n)$, which is independent of the choice of $(x, g_0)$. 

Let us show that $\Sigma_k^{(0)} = (EG)_N^{(0)}$ has a fundamental domain of finite measure. Note that $F_N := \bigsqcup_{i=1}^N X = X \times \{ 1, \dots, N \}$ is a fundamental domain of $\Sigma_k^{(0)} = (EG)_N^{(0)}$. Since $\# ((EG)_{N, x}^{(0)} \cap F_N) = N $ for every $x$, we have $\mu_{\Sigma_k^{(0)}}(F_N) = N < \infty$. 

\medskip
({\bf Step 3}: The sequence $(\Sigma_k)_{k \geq 1}$ is an exhaustion of $(EG)_N$.) It is clear that each $(\Sigma_k^{(n)})_k$ is increasing by definition. It suffices to show that $((EG)_N)_x^{(n)} = \bigcup_{k\geq 1} (\Sigma_k)_x^{(n)}$ holds for every $n \geq 0$ and $x \in X$. Take $x \in X$, $n \geq 0$ and $s = ((g_0, i_0), \dots, (g_n, i_n)) \in (EG)_{N, x}^{(n)}$. Since $G = \bigsqcup_{k\geq 1} E_k$, we have $g_0^{\pm 1}, \dots, g_n^{\pm 1} \in \tilde{E}_j$ for some $j \geq 1$ so that $s \in \Sigma_{j, x}^{(n)}$. Hence we are done. 
\end{proof}

We are ready to prove Proposition \ref{exhaustion}.

\begin{proof} (Proposition \ref{exhaustion}) 
Let $(\Sigma_{N,k})_{k \geq1}$ be a ULB-exhaustion of $(EG)_N$ for each $N \geq 1$, whose existence was established by the above lemma. Then, the sequence $(\Sigma_{k,k})_{k \geq 1}$ is clearly a ULB-exhaustion of $EG$. Note also that any simplicial $G$-complex $\Xi$ can be embedded $G$-equivariantly into the universal $G$-complex $E G$ thanks to Lemma \ref{lem:quasi-periodic}. Then, the sequence $(\Sigma_k)_{k \geq 1}$ defined by $\Sigma_k^{(n)} := \Sigma_{k,k}^{(n)} \cap \Xi^{(n)}$, $n \geq 0$, becomes a ULB-exhaustion of $\Xi$. 
\end{proof}

\subsection{Justification}

We will justify the geometric definition of $L^2$-Betti numbers of pmp discrete groupoids following the idea of Neshveyev and Rustad \cite{neshveyev} (that seems to originate in \cite[Remark 6.76]{luck:survey} dealing with the discrete group case). In what follows, we use L\"{u}ck's extention of the usual Murray-von Neumann dimension to arbitrary modules (see \cite{luck_98},\cite[\S\S 6.1]{luck:survey}) with keeping the same symbol $\dim_M$.  

The next theorem is the main result of this section. Recall that Sauer \cite{sauer} defined the ($n$-th) $L^2$-Betti number of $G$ by $\beta_n^{(2)}(G) = \dim_{L(G)} \Tor_n^{\mathbb{C} [G]} (L(G), L^\infty(X))$. 

\begin{thm} \label{mainthm2}
If $\Sigma$ is a contractible, simplicial $G$-complex, then $\beta_n^{(2)}(\Sigma,G) = \beta_n^{(2)}(G)$ holds for every $n \geq 0$.
\end{thm}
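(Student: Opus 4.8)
The plan is to exhibit the chain complex $C_\bullet(\Sigma)$, suitably augmented, as a flat resolution of $L^\infty(X)$ over $\mathbb{C}[G]$, so that Sauer's $\Tor$ is computed as the homology of $L(G)\otimes_{\mathbb{C}[G]} C_\bullet(\Sigma)$, and then to identify the L\"uck dimensions of this homology with the geometric double limit $\lim_i\lim_j \nabla_n(\Sigma_i,\Sigma_j)$ by means of L\"uck's comparison between the algebraic homology of a complex of finitely generated projective modules and the reduced $L^2$-homology of its Hilbert completion, exactly as in Neshveyev and Rustad \cite{neshveyev}.

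First I would construct the augmentation $\varepsilon : C_0(\Sigma) \to L^\infty(X)$ which fiberwise takes a $0$-chain to the sum of its coefficients, and check that it is a $\mathbb{C}[G]$-module map. Since $\Sigma$ is contractible, each fiber $\Sigma_x$ is a contractible simplicial complex, so the augmented simplicial chain complex of $\Sigma_x$ is exact; as both the $\mathbb{C}[G]$-structure on $C_\bullet(\Sigma)$ and the augmentation are defined fiberwise, this exactness yields a resolution
\begin{equation*}
\cdots \longrightarrow C_n(\Sigma) \longrightarrow \cdots \longrightarrow C_0(\Sigma) \xrightarrow{\ \varepsilon\ } L^\infty(X) \longrightarrow 0 .
\end{equation*}
Next I would verify that each $C_n(\Sigma)$ is a flat $\mathbb{C}[G]$-module. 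Decomposing $\Sigma^{(n)} = \bigsqcup_i G\cdot X_i$ as in the proof of Lemma \ref{lem:quasi-periodic} and cutting down to a ULB exhaustion, the finite-measure fundamental domains produce finitely generated projective $\mathbb{C}[G]$-modules whose filtered colimit is $\Gamma(\Sigma^{(n)})$; this colimit is flat, and $C_n(\Sigma)$, being the image of the $\mathbb{C}[G]$-linear antisymmetrization idempotent $f \mapsto \frac{1}{(n+1)!}\sum_\sigma (\mathrm{sgn}\,\sigma)\, f(\sigma^{-1}\,\cdot\,)$, is a direct summand and hence flat as well. Because a flat resolution computes $\Tor$, I obtain
\begin{equation*}
\beta_n^{(2)}(G) = \dim_{L(G)} \Tor_n^{\mathbb{C}[G]}\big(L(G), L^\infty(X)\big) = \dim_{L(G)} H_n\big(L(G)\otimes_{\mathbb{C}[G]} C_\bullet(\Sigma)\big).
\end{equation*}

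It then remains to match the right-hand side with $\beta_n^{(2)}(\Sigma, G)$. For a ULB complex each $C_n$ is finitely generated projective and $L(G)\otimes_{\mathbb{C}[G]} C_n$ sits densely in $C_n^{(2)}$ as a finitely generated projective Hilbert $L(G)$-module, so L\"uck's identification gives that the dimension of the algebraic homology of $L(G)\otimes_{\mathbb{C}[G]}C_\bullet$ equals $\dim_{L(G)}\overline{H}_n^{(2)}(\cdot, G)$. For general $\Sigma$ with ULB exhaustion $\{\Sigma_i\}$ one has $C_\bullet(\Sigma) = \varinjlim_i C_\bullet(\Sigma_i)$, hence $L(G)\otimes_{\mathbb{C}[G]}C_\bullet(\Sigma) = \varinjlim_i L(G)\otimes_{\mathbb{C}[G]}C_\bullet(\Sigma_i)$; I would then combine the continuity of $\dim_{L(G)}$ under directed colimits with the behaviour of the induced maps $H_n^{(2)}(J^{i,j}_\bullet)$ to rewrite the L\"uck dimension of the homology of the colimit as $\lim_i\lim_j \nabla_n(\Sigma_i,\Sigma_j)$, which is precisely the geometric definition \eqref{d}. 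As a by-product the common value is independent of the exhaustion, settling the well-definedness left open after Definition \ref{geometric-def}.

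The hard part will be this last identification: proving that the L\"uck dimension of the homology of the algebraic colimit complex equals the double limit $\lim_i\lim_j \nabla_n(\Sigma_i,\Sigma_j)$. One must control simultaneously the passage from algebraic to reduced $L^2$-homology on each ULB stage (checking that L\"uck's comparison applies not only to single complexes but compatibly to the maps $J^{i,j}_\bullet$ between stages) and the interchange of the colimit over $i$ with the von Neumann dimension, using cofinality of the exhaustion and continuity of $\dim_{L(G)}$. A subsidiary technical point to keep in mind is the discrepancy between $\Gamma(\Sigma^{(n)})$ and the bounded module $\Gamma^b(\Sigma^{(n)})$: since $\Gamma(\Sigma^{(n)})$ need not be projective for non-ULB $\Sigma$, the projectivity argument should be carried out stagewise on the ULB exhaustion, where finiteness of the fundamental domains guarantees finitely generated projective modules, and only afterwards passed to the flat colimit.
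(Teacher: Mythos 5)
Your overall architecture mirrors the paper's actual proof: use contractibility to produce an augmented resolution by the chain modules $C_\bullet(\Sigma)$, identify $\dim_{L(G)}$ of the homology of $L(G)\otimes_{\mathbb{C}[G]}C_\bullet(\Sigma)$ with Sauer's $\Tor$-dimension, and match that with the geometric double limit (this last identification is Proposition \ref{mainthm1}). However, the key technical claims you invoke to run this plan are false, and they fail for the same reason in each place: you conflate \emph{rank-dense} submodules with genuine equalities or colimits. Concretely: (i) the filtered union of the finitely generated projective modules $\bigoplus_{i\le m}\mathbb{C}[G]\,\mathbbm{1}_{X_i}$ is \emph{not} $\Gamma(\Sigma^{(n)})$; elements of that union are bounded functions with uniformly controlled fiberwise supports, whereas $\Gamma(\Sigma^{(n)})$ contains unbounded functions whose fiberwise supports are finite but not uniformly so. Lemma \ref{lemformain1} (1) gives only $d_{L^\infty(X)}$-density of this submodule, not equality, so your Lazard-type argument does not establish flatness of $\Gamma(\Sigma^{(n)})$, hence not of its direct summand $C_n(\Sigma)$, and the step ``a flat resolution computes $\Tor$'' is unsupported. (ii) For the same reason, even when $\Sigma$ is ULB the module $C_n(\Sigma)$ is \emph{not} finitely generated projective --- it is not even finitely generated, since it contains unbounded functions (already $C_0 = M(X)$ over $\mathbb{C}[G]=L^\infty(X)$ for the trivial groupoid) --- so L\"uck's comparison between finitely generated projective algebraic complexes and their Hilbert completions does not apply as you state it. (iii) Likewise $\bigcup_i C_\bullet(\Sigma_i)$ is a \emph{proper}, merely rank-dense, submodule of $C_\bullet(\Sigma)$, because the stage $i(x)$ needed to capture the fiberwise support of a chain may be unbounded in $x$; so the equality $C_\bullet(\Sigma)=\varinjlim_i C_\bullet(\Sigma_i)$ fails. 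The machinery that repairs all three defects at once is precisely the Thom/Neshveyev--Rustad rank-completion technology: Lemma \ref{lem:alg1} (rank-dense inclusions induce $\dim$-isomorphisms after applying $\Tor$ or homology) and Lemma \ref{lem:alg2} (a resolution whose terms merely \emph{contain} rank-dense projective submodules computes the \emph{dimension} of $\Tor$, though not $\Tor$ itself), fed by Lemma \ref{lem:auxiliary}. This is what the paper's proof does, and without it there is no bridge from the non-projective, non-flat modules $C_n(\Sigma)$ to $\beta_n^{(2)}(G)$.

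Two further, more local gaps. First, your augmentation cannot land in $L^\infty(X)$: for $f\in C_0(\Sigma)=\Gamma(\Sigma^{(0)})$ the fiberwise sum $\sum_{u\in\Sigma^{(0)}_x}f(u)$ is measurable but in general unbounded, so the complex must augment onto $M(X)$, the module of all measurable functions, as in Lemma \ref{resolution}; one then recovers $\beta_n^{(2)}(G)=\dim_{L(G)}\Tor_n^{\mathbb{C}[G]}(L(G),M(X))$ from the rank-density of $L^\infty(X)$ in $M(X)$ via Lemmas \ref{lem:alg3} and \ref{lem:alg1}. Second, fiberwise exactness of the augmented complexes $C_\bullet(\Sigma_x)\to\mathbb{C}\to 0$ does not by itself yield exactness of the complex of modules: one must choose the fiberwise contracting homotopies \emph{measurably} in $x$, which is the content of Lemmas \ref{lem:proj} and \ref{lem:homotopy} used in the proof of Lemma \ref{resolution}. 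These two points are fixable within your outline, but the rank-density versus flatness/projectivity confusion above is not: as written, the central equalities of your proof rest on false claims.
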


First, we prove the following proposition: 

\begin{prop}\label{mainthm1}
For any simplicial $G$-complex $\Sigma$ and any  ULB {\it exhaustion} $\{ \Sigma_i \}_{i \geq 1}$ of $\Sigma$, we have 
\begin{align*}
\beta_n^{(2)}(\Sigma,\{\Sigma_i\}_{i\geq1},G) 
= \dim_{L(G)} H_n(L(G) \otimes_{\mathbb{C}[G]} C_\bullet^b(\Sigma))
= \dim_{L(G)} H_n(L(G) \otimes_{\mathbb{C}[G]} C_\bullet(\Sigma))
\end{align*}
for every $n \geq 0$. In particular, $\beta_n^{(2)}(\Sigma,\{\Sigma_i\},G)$ is independent of the choice of $\{\Sigma_i\}_{i\geq1}$ so that we write $\beta^{(2)}(\Sigma,G) := \beta_n^{(2)}(\Sigma,\{\Sigma_i\},G)$ from now on. 
\end{prop}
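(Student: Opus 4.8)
The plan is to evaluate the double limit \eqref{d} by attaching an algebraic meaning to each $\nabla_n(\Sigma_i,\Sigma_j)$ and then passing to the limit through the formal properties of L\"uck's extended dimension $\dim_{L(G)}$: additivity along short exact sequences, cofinality (continuity) under filtered colimits of modules, and—above all—the comparison, for chain complexes of finitely generated projective $\mathbb{C}[G]$-modules, between reduced $L^2$-homology and the algebraic homology of the $L(G)$-tensored complex. For a ULB $G$-subcomplex $\Xi$ I abbreviate $H_n^{\mathrm{alg}}(\Xi):=H_n(L(G)\otimes_{\mathbb{C}[G]}C_\bullet^b(\Xi))$.

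The heart of the argument is a single-stage identification. Because $\Sigma_i$ and $\Sigma_j$ are ULB, each $C_n^b(\Sigma_i)$ is a projective $\mathbb{C}[G]$-module of finite $\dim_{L(G)}$ whose $L^2$-completion is the finitely generated Hilbert $L(G)$-module $C_n^{(2)}(\Sigma_i)$ (this is the finite-measure instance of the dimension computation recalled before Definition \ref{geometric-def}), and $J_\bullet^{i,j}$ is precisely the $L^2$-realization of the inclusion-induced algebraic chain map $L(G)\otimes_{\mathbb{C}[G]}C_\bullet^b(\Sigma_i)\to L(G)\otimes_{\mathbb{C}[G]}C_\bullet^b(\Sigma_j)$. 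I would then invoke the groupoid version of L\"uck's theorem that, for a morphism of finitely generated projective $\mathbb{C}[G]$-modules, the von Neumann dimension of the kernel of its $L^2$-realization equals $\dim_{L(G)}$ of the algebraic kernel, while the dimension of the closure of its image equals $\dim_{L(G)}$ of the algebraic image. Applied to the maps induced on cycles and boundaries—equivalently to $H_n^{(2)}(J^{i,j}_\bullet)$—this gives
\begin{equation*}
\nabla_n(\Sigma_i,\Sigma_j)=\dim_{L(G)}\overline{\im H_n^{(2)}(J^{i,j}_\bullet)}=\dim_{L(G)}\im\bigl(H_n^{\mathrm{alg}}(\Sigma_i)\to H_n^{\mathrm{alg}}(\Sigma_j)\bigr).
\end{equation*}
Proving this equality is the main obstacle, since it is exactly where the analytic and algebraic theories must be reconciled; the requisite weak-exactness and dimension compatibility I would import from \cite{neshveyev},\cite{luck:survey} and verify over $(L(G),\tau_G)$.

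Granting the single-stage identification, the double limit becomes dimension bookkeeping. Fix $i$. For $i\le j\le j'$ all maps factor through $H_n^{\mathrm{alg}}(\Sigma_i)$, so the algebraic images $\im(H_n^{\mathrm{alg}}(\Sigma_i)\to H_n^{\mathrm{alg}}(\Sigma_j))$ form a tower of surjections; writing the colimit as a quotient by the increasing union of the kernels and using additivity together with cofinality, I obtain
\begin{equation*}
\lim_{j}\nabla_n(\Sigma_i,\Sigma_j)=\dim_{L(G)}\im\bigl(H_n^{\mathrm{alg}}(\Sigma_i)\to H_n(L(G)\otimes_{\mathbb{C}[G]}\varinjlim_i C_\bullet^b(\Sigma_i))\bigr),
\end{equation*}
using that $L(G)\otimes_{\mathbb{C}[G]}-$ and homology commute with the filtered colimit. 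Letting $i\to\infty$ and applying cofinality once more yields $\lim_i\lim_j\nabla_n(\Sigma_i,\Sigma_j)=\dim_{L(G)}H_n(L(G)\otimes_{\mathbb{C}[G]}\varinjlim_i C_\bullet^b(\Sigma_i))$; the monotonicity used here is consistent with Remark \ref{rem:double-limit}.

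Finally I replace the colimit complex by the genuine complexes $C_\bullet^b(\Sigma)$ and $C_\bullet(\Sigma)$. The delicate point is that $\varinjlim_i C_n^b(\Sigma_i)=\bigcup_i C_n^b(\Sigma_i)$ may be strictly smaller than $C_n^b(\Sigma)$: a bounded chain on $\Sigma$ need not be supported in a single $\Sigma_i^{(n)}$, because the level required to capture its support can grow with the base point. I would resolve this by showing that the inclusion $\varinjlim_i C_\bullet^b(\Sigma_i)\hookrightarrow C_\bullet^b(\Sigma)$ induces a $\dim_{L(G)}$-isomorphism on homology after applying $L(G)\otimes_{\mathbb{C}[G]}-$, checking that the relevant cokernels are $\dim_{L(G)}$-negligible by cofinality over finitely generated submodules, and that the inclusion $C_\bullet^b(\Sigma)\hookrightarrow C_\bullet(\Sigma)$ likewise preserves $\dim_{L(G)}H_n$. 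Combining the three displayed computations gives both asserted equalities; since their right-hand sides do not mention the exhaustion, $\beta_n^{(2)}(\Sigma,\{\Sigma_i\}_{i\ge1},G)$ is independent of $\{\Sigma_i\}_{i\ge1}$, which justifies writing $\beta_n^{(2)}(\Sigma,G)$.
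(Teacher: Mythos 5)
Your overall architecture coincides with the paper's: identify $\nabla_n(\Sigma_i,\Sigma_j)$ with the dimension of the image of the inclusion-induced map on $H_n(L(G)\otimes_{\mathbb{C}[G]}C^b_\bullet(\,\cdot\,))$, pass to the colimit by continuity of $\dim_{L(G)}$ (your "bookkeeping" paragraph is exactly what the paper gets from \cite[Theorem 6.13]{luck:survey}, and it is correct), and finally replace $\bigcup_i C^b_\bullet(\Sigma_i)$ by $C^b_\bullet(\Sigma)$ and then by $C_\bullet(\Sigma)$. However, your starting point for the single-stage identification is off: $C_n^b(\Sigma_i)$ is \emph{not} a (finitely generated) projective $\mathbb{C}[G]$-module, so no version of L\"uck's theorem for morphisms of finitely generated projective modules applies to it directly; it merely contains a $d_{L^\infty(X)}$-dense projective submodule (Lemma \ref{lemformain1}~(1), Lemma \ref{lem:auxiliary}~(1)). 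The paper obtains your identification from Lemma \ref{lem:auxiliary}~(2) — the natural map $L(G)\otimes_{\mathbb{C}[G]}C_n^b(\Sigma_i)\to C_n^{(2)}(\Sigma_i)$ is a $\dim_{L(G)}$-isomorphism, proved via the antisymmetrization $A_n$, Lemma \ref{lemformain1}~(2) and the rank-completion functor — combined with the observation that the quotient map from unreduced to reduced $L^2$-homology is a $\dim_{L(G)}$-isomorphism. Since you flagged this as the main obstacle and pointed at \cite{neshveyev}, I count it as incomplete rather than wrong.

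The genuine gap is in your last step. Showing that the cokernels of $\bigcup_iC_n^b(\Sigma_i)\hookrightarrow C_n^b(\Sigma)\hookrightarrow C_n(\Sigma)$ are dimension-negligible does not yield a $\dim_{L(G)}$-isomorphism on homology, because $L(G)$ is not flat over $\mathbb{C}[G]$: writing $A_\bullet\subset B_\bullet$ for such an inclusion, the sequence $L(G)\otimes A_\bullet\to L(G)\otimes B_\bullet\to L(G)\otimes(B_\bullet/A_\bullet)\to 0$ is only right exact, and the kernel of the first map is governed by $\Tor_1^{\mathbb{C}[G]}(L(G),B_n/A_n)$, about which "cofinality over finitely generated submodules" says nothing. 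What actually makes the step work is that these inclusions are $d_{L^\infty(X)}$-dense, i.e.\ $\dim_{L^\infty(X)}$-isomorphisms over the \emph{subalgebra} $L^\infty(X)$, and that the pair $L^\infty(X)\subset\mathbb{C}[G]$ satisfies the compatibility condition verified in Lemma \ref{lem:alg3}; Neshveyev--Rustad's transfer lemma (Lemma \ref{lem:alg1}) then says that a $\dim_{L^\infty(X)}$-isomorphism of $\mathbb{C}[G]$-modules induces $\dim_{L(G)}$-isomorphisms on all $\Tor$'s after tensoring with $L(G)$, which is what kills the flatness obstruction and gives the equality of homology dimensions. This passage from rank-density over $L^\infty(X)$ to dimension statements over $L(G)$ is precisely the non-formal input that cannot be recovered from additivity and cofinality of $\dim_{L(G)}$ alone, and it is why the paper proves Lemma \ref{lem:alg3} before anything else.
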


Before proving the proposition, we provide a terminology and some general lemmas. Let $(M, \tau)$ be a finite von Neumann algebra equipped with a faithful normal tracial state. A  morphism $h : Q_1 \to Q_2$ between two $M$-modules is called a {\it $\dim_M$-isomorphism } if both $\dim_M \ker h$ and $\dim_M \mathrm{coker}\, h$ is zero. In the case, $\dim_M(Q_1) = \dim_M(Q_2)$ holds thanks to the additivity of $\dim_M$ (see \cite[Theorem 6.7 (4) (b)]{luck:survey}). See e.g.~\cite[\S2]{sauer} for further nice properties on $\dim_M$-isomorphisms. For an $M$-module $Q$, the {\it rank norm} $[ \xi ]_M$ of $\xi \in Q$ is defined to be $\inf \{ \tau(p) \, | \, p \in M^p, p \xi = \xi \}$. Then $d_M ( \xi, \eta) := [ \xi - \eta ]_M$ defines a pseudo metric on $Q$. The procedure of completion in the metric $d_M$ defines a functor $c_M$, called the {\it functor of rank completion}, from the category of $M$-modules to itself. See \cite[\S2]{thom} and \cite[Lemma 1.1]{neshveyev} for more on this functor $c_M$ and its connection with the dimension function $\dim_M$. 

Here we quote two general lemmas from \cite{neshveyev}.

\begin{lem}\label{lem:alg1} (\cite[Lemma 1.3]{neshveyev}) 
Let $N \subset \mathfrak{M} \subset M$ be a triple of algebras such that $N$ and $M$ are finite von Neumann algebras with faithful normal tracial states $\tau_N$ and $\tau_M$, respectively. Assume that the inclusion $N \subset \mathfrak{M}$ satisfies the following condition: for any $m \in \mathfrak{M}$ and $\epsilon > 0$, there exists a $\delta > 0$ such that if $p \in N^p$ satisfies $\tau_N (p)  < \delta$, then $[ m p ]_N < \epsilon$. Then, for any $\dim_N$-isomorphic $\mathfrak{M}$-map $Q_1 \to Q_2$, the induced $\mathfrak{M}$-map $\mathrm{Tor}_n^{\mathfrak{M}} (M, Q_1) \to \mathrm{Tor}_n^{\mathfrak{M}} (M, Q_2)$ is $\dim_M$-isomorphic for every $n \geq 0$.
\end{lem}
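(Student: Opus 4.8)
The plan is to first reduce the statement to a single \emph{vanishing assertion}, and then to establish that vanishing in two stages: the degree-zero case by a direct rank-norm estimate, and the higher-degree case through the rank-completion functor $c_N$, where the continuity hypothesis on $N\subset\mathfrak{M}$ does the essential work.

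\emph{Reduction.} Given a $\dim_N$-isomorphic $\mathfrak{M}$-map $h:Q_1\to Q_2$, I would factor it through its image and consider the two short exact sequences of $\mathfrak{M}$-modules
\[
0 \to \ker h \to Q_1 \to \im h \to 0, \qquad 0 \to \im h \to Q_2 \to \coker h \to 0,
\]
whose outer terms satisfy $\dim_N(\ker h)=\dim_N(\coker h)=0$ by hypothesis. Applying $\Tor^{\mathfrak{M}}_\bullet(M,-)$ to each yields long exact sequences. Thus everything reduces to the vanishing claim
\[
\dim_N L = 0 \ \Longrightarrow\ \dim_M \Tor_n^{\mathfrak{M}}(M,L)=0 \quad (n\ge 0):
\]
granting it, every term $\Tor_n^{\mathfrak{M}}(M,\ker h)$ and $\Tor_n^{\mathfrak{M}}(M,\coker h)$ is $\dim_M$-negligible, so by additivity of $\dim_M$ and the calculus of $\dim_M$-isomorphisms recalled before the lemma, both $\Tor_n^{\mathfrak{M}}(M,Q_1)\to\Tor_n^{\mathfrak{M}}(M,\im h)$ and $\Tor_n^{\mathfrak{M}}(M,\im h)\to\Tor_n^{\mathfrak{M}}(M,Q_2)$ are $\dim_M$-isomorphisms, and their composite is $\Tor_n^{\mathfrak{M}}(M,h)$. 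This step uses only additivity, not the continuity hypothesis.

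\emph{The case $n=0$.} If $\dim_N L=0$ then $[\xi]_N=0$ for every $\xi\in L$, i.e.\ each $\xi$ is fixed by an $N$-projection of arbitrarily small trace. For a simple tensor $a\otimes\xi\in M\otimes_{\mathfrak{M}}L$, I would pick $q\in N^p$ with $q\xi=\xi$ and $\tau_N(q)$ small; since $q\in N\subset\mathfrak{M}$ it slides across the tensor, $a\otimes\xi = aq\otimes\xi$, and the left support of $aq$ in $M$ has trace at most $\tau_M(q)=\tau_N(q)$ (using that $\tau_M$ restricts to $\tau_N$ on $N$). Hence $[a\otimes\xi]_M=0$, and subadditivity of $[\cdot]_M$ extends this to finite sums, giving $\dim_M(M\otimes_{\mathfrak{M}}L)=0$. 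Again only trace compatibility is used here.

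\emph{The case $n\ge 1$, and the main obstacle.} Naive dimension shifting fails, since a free presentation $0\to L'\to F\to L\to 0$ destroys the hypothesis ($\dim_N L'=\dim_N F=\infty$). Instead I would invoke the rank-completion functor $c_N$, which is exact and detects $\dim_N$ (so $c_N L=0$); the hypothesis on $N\subset\mathfrak{M}$ is \emph{exactly} the uniform continuity of left multiplication by each $m\in\mathfrak{M}$ in the $N$-rank metric, since $[mx]_N\le[mp]_N$ whenever $px=x$. Consequently $c_N$ upgrades to a functor into modules over the rank-completed ring $\mathfrak{M}_c:=c_N\mathfrak{M}$, and the $\mathfrak{M}$-action on $M$ extends, making $M$ an $\mathfrak{M}_c$-module. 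The plan is then to compare $\Tor^{\mathfrak{M}}(M,L)$ with $\Tor^{\mathfrak{M}_c}(M,c_N L)$ by change of rings: because the rank completion of a finite von Neumann algebra is von-Neumann-regular (Thom), $M$ should be flat over $\mathfrak{M}_c$, so all higher $\Tor^{\mathfrak{M}_c}(M,-)$ vanish while $M\otimes_{\mathfrak{M}_c}c_N L=0$. The hard part will be precisely this transfer: one computes $\Tor^{\mathfrak{M}}(M,L)$ from a free resolution $P_\bullet\to L$, applies the exact $c_N$ to obtain an acyclic complex $c_N P_\bullet$ of $\mathfrak{M}_c$-modules, and must verify that $M\otimes_{\mathfrak{M}}P_\bullet$ and $M\otimes_{\mathfrak{M}_c}c_N P_\bullet$ have $\dim_M$-isomorphic homology in every degree, despite $\mathfrak{M}\hookrightarrow\mathfrak{M}_c$ being far from flat. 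It is in this bookkeeping, balancing the $N$-rank metric against the $M$-rank metric, that the continuity hypothesis is indispensable, and I expect it to be the most delicate step of the argument.
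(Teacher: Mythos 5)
Two of your three stages are sound, but note first that the paper itself offers no proof of this lemma: it is quoted verbatim from Neshveyev--Rustad, so your attempt has to be measured against the cited proof. Your reduction to the vanishing statement ($\dim_N L = 0 \Rightarrow \dim_M \Tor_n^{\mathfrak{M}}(M,L) = 0$ for all $n$) is correct, with one point worth making explicit: the kernel of $\Tor_n^{\mathfrak{M}}(M,\im h) \to \Tor_n^{\mathfrak{M}}(M,Q_2)$ is a quotient of $\Tor_{n+1}^{\mathfrak{M}}(M,\coker h)$, so you are invoking the vanishing claim in degree $n+1$ as well --- harmless, since you grant it in all degrees at once, but it means the reduction cannot be fed into a na\"ive induction on $n$. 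Your degree-zero argument is complete; it uses Sauer's local criterion ($\dim_N L = 0$ iff $[\xi]_N = 0$ for all $\xi \in L$) and the tacit compatibility $\tau_M\!\upharpoonright_N = \tau_N$, which you rightly flag and which is implicit in the statement (and holds in the paper's application $L^\infty(X) \subset \mathbb{C}[G] \subset L(G)$).

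The genuine gap is in degrees $n \geq 1$, which is the entire content of the lemma, and the scaffolding you propose there would not survive being made precise. First, $M$ is not an $\mathfrak{M}_c$-module: $M$ is in general not complete in the $d_N$-rank metric --- already $c_{L^\infty(X)}(L^\infty(X)) = M(X) \supsetneq L^\infty(X)$, a density the paper itself exploits in proving Theorem \ref{mainthm2} --- so for a $d_N$-Cauchy sequence $(m_i)$ in $\mathfrak{M}$ and $a \in M$ the sequence $(m_i a)$ converges only in $c_N(M)$, not in $M$; hence ``$M$ is flat over $\mathfrak{M}_c$'' is ill-posed and the vanishing of $\Tor^{\mathfrak{M}_c}_n(M,-)$ has no meaning. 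Second, the regularity you attribute to Thom is not his theorem in this setting: Thom proves von Neumann regularity for the completion of a finite von Neumann algebra \emph{in its own rank metric} (the affiliated operators), whereas you need it for $c_N(\mathfrak{M})$ with $N$ a small subalgebra, and the hypotheses do not deliver it. Indeed, take $N = \mathbb{C} \subset \mathfrak{M} = \mathbb{C}[\Gamma] \subset M = L(\Gamma)$: the continuity condition holds vacuously (the only projection of trace $< \delta < 1$ is $0$), $c_N = \id$, $\mathfrak{M}_c = \mathbb{C}[\Gamma]$ is not von Neumann regular, and $M$ is not flat over it (e.g.\ $\Gamma = \mathbb{Z}^2$), even though the lemma is (trivially) true there since $\dim_{\mathbb{C}}$-isomorphisms are isomorphisms. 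So the pillars of your change-of-rings plan fail as stated, and the step you yourself defer --- comparing $H_*(M \otimes_{\mathfrak{M}} P_\bullet)$ with the completed complex up to $\dim_M$-isomorphism --- is not bookkeeping but precisely the missing mathematical content. The cited proof never forms a completed ring nor invokes regularity or flatness: the continuity condition is used only to show that $c_N$ of an $\mathfrak{M}$-module is again an $\mathfrak{M}$-module and that $c_N$ is exact on $\mathfrak{M}$-modules (Neshveyev--Rustad, Lemma 1.2), and the comparison with $\Tor$ is then carried out entirely within $\mathfrak{M}$-modules; to repair your attempt you should follow that route rather than the change-of-rings one.
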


\begin{lem}\label{lem:alg2}(\cite[Lemma 1.4]{neshveyev})
Let $N \subset \mathfrak{M} \subset M$ be as in Lemma \ref{lem:alg1}. 
Assume that the pair $N \subset \mathfrak{M}$ satisfies the assumption of Lemma \ref{lem:alg1}. Then, for any resolution $P_\bullet$ of an $\mathfrak{M}$-module $Q$ such that each $P_k$ has a $d_N$-dense projective submodule, we have $\dim_M \mathrm{Tor}_n^\mathfrak{M} (M, Q) = \dim_M H_n(M \otimes_\mathfrak{M} P_\bullet)$ for every $n \geq 0$.
\end{lem}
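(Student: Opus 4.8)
The plan is to convert the hypothesis into a $\dim_M$-flatness statement for each module of the resolution, and then to read off both sides of the desired equality by the same dimension-shifting (dévissage) computation on the syzygies, using only the additivity and monotonicity of $\dim_M$ on short exact sequences.

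First I would show that each $P_k$ is $\dim_M$-flat, in the sense that $\dim_M \Tor_n^{\mathfrak{M}}(M, P_k) = 0$ for all $n \geq 1$. Let $P_k' \subseteq P_k$ be the given $d_N$-dense projective $\mathfrak{M}$-submodule. The inclusion is injective, and the characterization of the rank metric in \cite[Lemma 1.1]{neshveyev} identifies the quotient rank norm $[\bar\xi]_N$ of a class $\bar\xi \in P_k/P_k'$ with the distance $\inf_{\eta \in P_k'}[\xi - \eta]_N$; hence $d_N$-density is exactly the statement that every element of $P_k/P_k'$ has vanishing rank norm, so $\dim_N(P_k/P_k') = 0$. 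Thus $P_k' \hookrightarrow P_k$ is a $\dim_N$-isomorphic $\mathfrak{M}$-map, and Lemma \ref{lem:alg1} makes $\Tor_n^{\mathfrak{M}}(M, P_k') \to \Tor_n^{\mathfrak{M}}(M, P_k)$ $\dim_M$-isomorphic for every $n$. Since $P_k'$ is projective, its higher $\Tor$ vanish, and the $\dim_M$-flatness of $P_k$ follows.

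Next, writing $Z_k := \ker \partial_k = \im \partial_{k+1}$ (with $Z_{-1} := Q$) gives, by exactness of the resolution, short exact sequences $0 \to Z_k \to P_k \to Z_{k-1} \to 0$. Applying the long exact sequence of $\Tor_\bullet^{\mathfrak{M}}(M, -)$ together with the flatness of the previous step, the connecting maps $\Tor_{i+1}^{\mathfrak{M}}(M, Z_{k-1}) \to \Tor_i^{\mathfrak{M}}(M, Z_k)$ become $\dim_M$-isomorphic for all $i \geq 1$, their kernels and cokernels being squeezed between $\dim_M$-zero terms; iterating yields $\dim_M \Tor_n^{\mathfrak{M}}(M, Q) = \dim_M \Tor_1^{\mathfrak{M}}(M, Z_{n-2})$ for $n \geq 2$. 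On the homology side, the differential $M \otimes_{\mathfrak{M}} \partial_k$ factors as a surjection $q_k \colon M \otimes_{\mathfrak{M}} P_k \twoheadrightarrow M \otimes_{\mathfrak{M}} Z_{k-1}$ (right-exactness) followed by the map $\alpha_{k-1}$ induced by $Z_{k-1} \hookrightarrow P_{k-1}$; right-exactness also gives $\im(M \otimes_{\mathfrak{M}} \partial_{k+1}) = \ker q_k$, so $q_k$ induces $H_k(M \otimes_{\mathfrak{M}} P_\bullet) \cong \ker \alpha_{k-1}$. The $\Tor$ long exact sequence of $0 \to Z_{k-1} \to P_{k-1} \to Z_{k-2} \to 0$ exhibits $\ker \alpha_{k-1}$ as the image of the connecting map out of $\Tor_1^{\mathfrak{M}}(M, Z_{k-2})$, whose kernel is $\dim_M$-zero by flatness; hence $\dim_M H_k(M \otimes_{\mathfrak{M}} P_\bullet) = \dim_M \Tor_1^{\mathfrak{M}}(M, Z_{k-2})$. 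Comparing the two computations gives the equality for $n \geq 2$, and the cases $n = 0, 1$ fall out directly: $H_0(M \otimes_{\mathfrak{M}} P_\bullet) = M \otimes_{\mathfrak{M}} Q = \Tor_0^{\mathfrak{M}}(M, Q)$ and $\dim_M H_1 = \dim_M \Tor_1^{\mathfrak{M}}(M, Q)$ by the same short-exact-sequence analysis with $Z_{-1} = Q$.

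The main obstacle I anticipate is concentrated in the first step: one must handle the rank-completion formalism carefully to turn ``$d_N$-dense'' into the clean vanishing $\dim_N(P_k/P_k') = 0$, since this is the sole place where the hypothesis is consumed and the only gateway through which Lemma \ref{lem:alg1} can be invoked. Everything thereafter is formal homological algebra in the abelian category of $\mathfrak{M}$-modules: the two long-exact-sequence computations are routine provided one verifies at each stage that the relevant connecting maps have $\dim_M$-zero kernel and cokernel, which is guaranteed because the flanking $\Tor$-terms carry $\dim_M$-zero and because $\dim_M$ is additive and monotone on short exact sequences (\cite[Theorem 6.7]{luck:survey}).
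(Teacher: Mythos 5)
Your proof is correct, but note first that the paper itself contains no proof of this statement: it is stated as a verbatim quotation of \cite[Lemma 1.4]{neshveyev} (the text says explicitly that the two general lemmas are quoted from that source), so the only meaningful comparison is with Neshveyev--Rustad. Their development of this material is organized around Thom's rank-completion formalism --- the functor $c$, its exactness, and the identification of a module with its $d_N$-completion up to $\dim$-isomorphism --- which is also how the present paper deploys the machinery elsewhere (e.g., applying $c_{L(G)}$ in the proof of Lemma \ref{lemformain1}). Your argument is a genuinely different and more elementary route that bypasses completions entirely: the hypothesis is consumed exactly once, by converting $d_N$-density of $P_k' \subseteq P_k$ into $\dim_N(P_k/P_k') = 0$ and feeding the inclusion into Lemma \ref{lem:alg1} to obtain $\dim_M \Tor_n^{\mathfrak{M}}(M, P_k) = 0$ for $n \geq 1$; after that, everything is dimension shifting with L\"uck's additivity \cite[Theorem 6.7]{luck:survey}. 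Both hinge facts in your first step check out: the quotient rank norm does satisfy $[\bar\xi]_N = \inf_{\eta \in P_k'} [\xi - \eta]_N$ (if $p\bar\xi = \bar\xi$ then $\eta := \xi - p\xi \in P_k'$ and $[\xi - \eta]_N = [p\xi]_N \leq \tau_N(p)$; conversely if $p(\xi - \eta) = \xi - \eta$ with $\eta \in P_k'$ then $p\xi - \xi = p\eta - \eta \in P_k'$), and the implication ``every element has vanishing rank norm $\Rightarrow \dim_N = 0$'' is Sauer's local criterion (see \cite[\S2]{sauer}). The homological bookkeeping is likewise sound: right-exactness gives $\ker q_k = \im(M \otimes_{\mathfrak{M}} \partial_{k+1})$, so $q_k$ identifies $H_k(M \otimes_{\mathfrak{M}} P_\bullet)$ with $\ker \alpha_{k-1}$, and your connecting-map analysis --- kernels are quotients of $\dim_M$-null Tor terms, cokernels are submodules of them --- remains valid even when dimensions are infinite, since additivity of $\dim_M$ holds with values in $[0,\infty]$. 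What the completion route buys is a reusable functor that packages all density statements at once and serves the rest of the paper; what your route buys is self-containedness, making transparent that Lemma \ref{lem:alg1} is the sole non-formal input and the rest is standard d\'evissage.
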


In order to use the above lemmas in our situation, we prove the next two lemmas. 

\begin{lem} \label{lem:alg3} The pair $L^\infty(X) \subset \mathbb{C}[G]$ satisfies the assumption of Lemma \ref{lem:alg1}. 
\end{lem}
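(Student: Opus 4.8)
The plan is to verify directly the defining condition of Lemma \ref{lem:alg1} applied to the inclusion $N = L^\infty(X) \subset \mathfrak{M} = \mathbb{C}[G]$, where the ambient finite von Neumann algebra is $M = L(G)$ with its trace $\tau_G$ and $N = L^\infty(X)$ carries the trace $\tau_N(\xi) = \int_X \xi\, d\mu$. Concretely, given $m \in \mathbb{C}[G]$ and $\epsilon > 0$, I must produce $\delta > 0$ so that any projection $p \in L^\infty(X)$ with $\tau_N(p) = \mu(\supp p) < \delta$ satisfies $[mp]_N < \epsilon$. The key observation is that the rank norm $[\cdot]_N$ over the abelian algebra $N = L^\infty(X)$ is computed by support projections: for an element $m p$ in any $N$-module, $[mp]_N$ is the $\mu$-measure of the smallest $N$-projection that fixes $mp$.

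First I would unwind what $mp$ is as an element of $\mathbb{C}[G]$ and how $N = L^\infty(X)$ acts. Here $p = \mathbbm{1}_A$ for a Borel set $A \subset X$ with $\mu(A) < \delta$, and $p$ sits inside $\mathbb{C}[G]$ as a function supported on the unit space $X \subseteq G$. The product $mp$ in $\mathbb{C}[G]$ is, by the convolution formula $(m p)(g) = \sum_{g_1 g_2 = g} m(g_1) p(g_2)$, simply the function $g \mapsto m(g)\mathbbm{1}_A(s(g))$, i.e. $m$ restricted to those $g$ with source in $A$. The essential point is that the groupoid ring condition guarantees a uniform bound: there is an integer $K$ (the $\mu$-a.e.\ bound on $x \mapsto \#(s^{-1}(x) \cap \supp m)$ together with its range counterpart) controlling how many sheets $\supp m$ meets over each fibre. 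Next I would identify an $N$-projection fixing $mp$: since $mp$ is supported on $g$ with $s(g) \in A$ and hence with $r(g) \in \varphi_E(A)$ for the finitely many one-sheeted sets $E$ meeting $\supp m$, the support of $mp$ projects under $r$ into a set $B \subset X$ of measure at most $K \mu(A) < K\delta$, using invariance of $\mu$ under the partial isomorphisms $\varphi_E$. The left multiplication action of $N$ on $mp$ (as an $N$-module via $\xi \cdot (mp) = \xi m p$ where $\xi$ acts through $r$) is then fixed by $q := \mathbbm{1}_B$, giving $[mp]_N \leq \tau_N(q) = \mu(B) \leq K\mu(A)$.

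With this estimate in hand, the choice $\delta := \epsilon / K$ finishes the argument: whenever $\mu(\supp p) < \delta$ we obtain $[mp]_N \leq K\mu(A) < K\delta = \epsilon$, exactly as required. I would close by remarking that $K$ depends only on $m$ (through the defining boundedness conditions of $\mathbb{C}[G]$) and not on $p$, which is precisely the uniformity demanded by Lemma \ref{lem:alg1}.

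The main obstacle I anticipate is not the measure-theoretic counting, which is routine once the invariance of $\mu$ is invoked, but rather pinning down the correct $N$-module structure on $mp$ and the corresponding notion of support projection, so that the rank norm $[\,\cdot\,]_N$ is genuinely controlled by $r(\supp(mp))$ rather than $s(\supp(mp))$. One must be careful that the left action of $L^\infty(X)$ used in defining $[mp]_N$ acts through the range map $r$, so that the relevant fibre-counting bound is the one for $r$; matching this convention against the way $\mathbb{C}[G]$-modules and the functor $c_N$ are set up in \cite{neshveyev} is the delicate bookkeeping step. Everything else reduces to the uniform finiteness built into the definition of $\mathbb{C}[G]$ and the measure-preservation of the $\varphi_E$.
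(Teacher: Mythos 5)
Your proposal is correct and takes essentially the same approach as the paper: both arguments transfer the small projection $p$ from the right of $m$ to a projection of controlled trace acting on the left, using the decomposition of $\supp m$ into finitely many one-sheeted sets and the $\mu$-invariance of the partial isomorphisms $\varphi_E$, so that $[mp]_{L^\infty(X)} \leq C_m\,\tau(p)$ with $C_m$ depending only on $m$, whence $\delta = \epsilon/C_m$ works. The only difference is bookkeeping: the paper proves the exact identity $f\mathbbm{1}_Z = \mathbbm{1}_{\varphi_E^{-1}(Z)}\,f$ for each one-sheeted piece $f$ of $m$ (so $C_m$ is the number of pieces), while you bound $\mu\bigl(r(\supp(mp))\bigr)$ directly by a fiber-counting constant $K$ and take the support projection $\mathbbm{1}_{r(\supp(mp))}$ — a minor variant of the same mechanism.
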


\begin{proof} 
It is known, see \cite[Lemma 3.3]{sauer}, that any element in $\mathbb{C}[G]$ is written as a finite sum of elements in $\mathbb{C}[G]$ supported in one-sheeted sets. Hence it suffices to show that $[f \mathbbm{1}_Z] \leq \tau(\mathbbm{1}_Z)$ for every $f \in \mathbb{C}[G]$ supported in a one-sheeted set $E$ and every subset $Z$ of $X$. We have $(f \mathbbm{1}_Z) (g) = f(g) \mathbbm{1}_E (g) \mathbbm{1}_Z (r(g)) = f(g) \mathbbm{1}_E (g) \mathbbm{1}_{\varphi_E^{-1}(Z)}(s(g)) = (\mathbbm{1}_{\varphi_E^{-1}(Z)} f )(g)$ for all $g \in G$. Hence we have $[ f \mathbbm{1}_Z ]_{L^\infty(X)} = [ \mathbbm{1}_{\varphi_E^{-1}(Z)} f ]_{L^\infty(X)} \leq \tau (\mathbbm{1}_{\varphi_E^{-1}(Z)}) = \mu(\varphi_E^{-1}(Z)) \leq \mu(Z) = \tau (\mathbbm{1}_Z)$. Here the first inequality simply follows from the definition of the rank norm and the second one from the fact that $\varphi_E$ is $\mu$-preserving.
\end{proof}

\begin{lem} \label{lemformain1}
Let $U$ be a quasi-periodic $G$-space with fundamental domain $F$. Then, 
\begin{enumerate}
\item $\Gamma(U)$ has a $d_{L^\infty(X)}$-dense, projective $\mathbb{C}[G]$-submodule;
\item if $\mu_U(F) < \infty$, then the $\mathbb{C}[G]$-map 
$h : L(G) \otimes_{\mathbb{C}[G]} \Gamma^b(U) \to \Gamma^{(2)}(U)$ sending $m \otimes \xi$ to $m \cdot \xi$ is a $\dim_{L(G)}$-isomorphism.
\end{enumerate}
\end{lem}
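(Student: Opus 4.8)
The plan is to reduce everything to the explicit decomposition of $U$ obtained in the proof of Lemma~\ref{lem:quasi-periodic}. Since $U$ is quasi-periodic with fundamental domain $F$, after discarding a null set we may write $U=\bigsqcup_{i\in I}G\cdot F_i$, where $\{F_i\}_{i\in I}$ is a countable Borel partition of $F$ into sections and $X_i:=\pi_U(F_i)$; the map $g\cdot u\mapsto g$ identifies $G\cdot F_i$ $G$-equivariantly with $s^{-1}(X_i)\subseteq G$ (a left-$G$-invariant subset, as left translation preserves the source), and under this identification $\pi_U$ corresponds to $r$. Hence $\Gamma^\star(U)\cong\bigoplus_i\Gamma^\star(s^{-1}(X_i))$ as $\mathbb{C}[G]$-modules, the $\mathbb{C}[G]$-action being left convolution, and in the $L^2$-case this recovers $\Gamma^{(2)}(U)=\sum^{\oplus}_i L^2(G)\mathbbm{1}_{X_i}$ with $\dim_{L(G)}\Gamma^{(2)}(U)=\sum_i\mu(X_i)=\mu_U(F)$, as recorded before Definition~\ref{geometric-def}. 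A short computation with the module formula shows that a projection $\mathbbm{1}_Z\in L^\infty(X)$ acts on $\Gamma^\star(U)$ simply by multiplication by $\mathbbm{1}_{\pi_U^{-1}(Z)}$, so that the rank norm is $[\xi]_{L^\infty(X)}=\mu\big(\pi_U(\supp\xi)\big)$. The candidate submodule throughout will be $P:=\bigoplus_i\mathbb{C}[G]\mathbbm{1}_{X_i}$.

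For (1), each $\mathbbm{1}_{X_i}$ is an idempotent of $\mathbb{C}[G]$ lying in $L^\infty(X)$, so $\mathbb{C}[G]\mathbbm{1}_{X_i}$ is a direct summand of $\mathbb{C}[G]$ and $P$ is projective; it is visibly a $\mathbb{C}[G]$-submodule of $\Gamma(U)$. For $d_{L^\infty(X)}$-density I would fix a decomposition $G=\bigsqcup_k E_k$ into one-sheeted sets (as in \S2) and, given $\varphi\in\Gamma(U)$, truncate by setting $\psi_K:=\varphi\cdot\mathbbm{1}_{W_K}$, where $W_K$ restricts to indices $i\le K$, to group elements in $\bigsqcup_{k\le K}E_k$, and to values $|\varphi|\le K$. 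Each $\psi_K$ lies in $\bigoplus_{i\le K}\mathbb{C}[G]\mathbbm{1}_{X_i}\subseteq P$ because one-sheetedness bounds the source- and range-fibre counts of its support by $K$ while the values are bounded; and by the rank-norm formula $[\varphi-\psi_K]_{L^\infty(X)}=\mu\big(\pi_U(\supp(\varphi-\psi_K))\big)$. The key point is that, since $S(\varphi)$ is finite $\mu$-a.e., for almost every $x$ the finite set $\pi_U^{-1}(x)\cap\supp\varphi$ is eventually contained in $W_K$, so the integrand tends to $0$ pointwise a.e.\ and the measure tends to $0$ by dominated convergence; this yields density.

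For (2) I would factor $h$ through $P$. The same truncation shows $P$ is also $d_{L^\infty(X)}$-dense in $\Gamma^b(U)$ (now $S(\varphi)$ and the values of $\varphi$ are already bounded, so only the index- and one-sheeted cut-offs are needed), whence the inclusion $P\hookrightarrow\Gamma^b(U)$ has zero kernel and, by density in the rank metric, cokernel of $\dim_{L^\infty(X)}$ zero; i.e.\ it is a $\dim_{L^\infty(X)}$-isomorphism. Since the pair $L^\infty(X)\subseteq\mathbb{C}[G]$ satisfies the hypothesis of Lemma~\ref{lem:alg1} by Lemma~\ref{lem:alg3}, applying that lemma with $n=0$ shows the induced map $\iota:L(G)\otimes_{\mathbb{C}[G]}P\to L(G)\otimes_{\mathbb{C}[G]}\Gamma^b(U)$ is a $\dim_{L(G)}$-isomorphism. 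On the other hand, because $P$ is projective, $L(G)\otimes_{\mathbb{C}[G]}P\cong\bigoplus_i L(G)\mathbbm{1}_{X_i}$, and $h\circ\iota$ becomes $\bigoplus_i h_i$ with $h_i:L(G)\mathbbm{1}_{X_i}\to L^2(G)\mathbbm{1}_{X_i}$, $m\mathbbm{1}_{X_i}\mapsto (m\mathbbm{1}_{X_i})\cdot\mathbbm{1}_X$, the canonical inclusion of the algebra into its $L^2$-completion. Each $h_i$ is injective (faithfulness of $\tau_G$) with dense image ($\mathbbm{1}_X$ is cyclic and separating), and as $\dim_{L(G)}L(G)\mathbbm{1}_{X_i}=\mu(X_i)=\dim_{L(G)}L^2(G)\mathbbm{1}_{X_i}<\infty$, additivity of $\dim_{L(G)}$ forces $\dim_{L(G)}\coker h_i=0$; the hypothesis $\mu_U(F)<\infty$ guarantees $\sum_i\mu(X_i)<\infty$, so summing keeps all dimensions finite and $h\circ\iota=\bigoplus_i h_i$ is a $\dim_{L(G)}$-isomorphism. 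Finally, since $\iota$ and $h\circ\iota$ are $\dim_{L(G)}$-isomorphisms and this class has the two-out-of-three property (see \cite[\S2]{sauer}), $h$ itself is a $\dim_{L(G)}$-isomorphism.

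I expect the density (rank-completion) step to be the main obstacle: one must choose the truncation so that the truncated function genuinely lands in $\mathbb{C}[G]\mathbbm{1}_{X_i}$ --- which forces simultaneous control of the source-fibre count, the range-fibre count, and the sup-norm --- while still proving that the base-projection of the remaining support has measure tending to zero. Identifying the rank norm with $\mu\big(\pi_U(\supp\cdot)\big)$ and reducing the error estimate to the $\mu$-a.e.\ finiteness of $S(\varphi)$ via dominated convergence is what makes this manageable; the remaining algebraic inputs (projectivity, Lemma~\ref{lem:alg1}, and two-out-of-three) are then formal.
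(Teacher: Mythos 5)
Your proposal follows the paper's own proof almost step for step: the same projective submodule $P=\bigoplus_{i}\mathbb{C}[G]\mathbbm{1}_{X_i}$, the same truncation argument for rank-density (the paper splits it into an index cut-off followed by the density of $\mathbb{C}[G]$ in $\Gamma(G)$, while you perform the index, one-sheeted and value cut-offs simultaneously, but the computation is the same), the same appeal to Lemmas \ref{lem:alg3} and \ref{lem:alg1} to make $\iota\colon L(G)\otimes_{\mathbb{C}[G]}P\to L(G)\otimes_{\mathbb{C}[G]}\Gamma^b(U)$ a $\dim_{L(G)}$-isomorphism, and the same ``factor $h$ through $P$, then two-out-of-three'' ending. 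Part (1) and all the reduction steps in (2) are correct.

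The gap is in your summation step, which is exactly the point where the hypothesis $\mu_U(F)<\infty$ has to do real work. The codomain of $h\circ\iota$ is $\Gamma^{(2)}(U)$, the \emph{Hilbert-space} direct sum of the $L^2(G)\mathbbm{1}_{X_i}$, whereas the image of $h\circ\iota$ lies in the \emph{algebraic} direct sum $\bigoplus_i L^2(G)\mathbbm{1}_{X_i}$. So $h\circ\iota$ is not $\bigoplus_i h_i$, and $\coker(h\circ\iota)$ is an extension of $\Gamma^{(2)}(U)/\bigoplus_i L^2(G)\mathbbm{1}_{X_i}$ by $\bigoplus_i\coker h_i$. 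Your argument disposes of the second piece --- and for that piece no finiteness of $\sum_i\mu(X_i)$ is needed at all, by countable additivity of $\dim_{L(G)}$ --- but it says nothing about the first, and ``summing keeps all dimensions finite'' does not touch it. The first piece genuinely can have positive dimension when $\sum_i\mu(X_i)=\infty$: take all $X_i=X$ and $\eta=(c_i\mathbbm{1}_X)_i$ with all $c_i\neq0$; since $\mathbbm{1}_X$ is separating, no projection $p\neq1$ fixes any tail coordinate, so $\eta$ stays at rank-distance $1$ from the algebraic sum. The missing estimate --- which is what the paper's ``it is plain to see'' sentence is really about --- is that every $\eta\in L^2(G)\mathbbm{1}_{X_i}$ satisfies $[\eta]_{L(G)}\leq\mu(X_i)$: view $\eta$ as a square-integrable operator $T$ affiliated with $L(G)$ with $T=T\mathbbm{1}_{X_i}$; its left support fixes $\eta$ and has the same trace as its right support, which is $\leq\mathbbm{1}_{X_i}$. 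Then for $\eta=(\eta_i)_i\in\Gamma^{(2)}(U)$ one gets $[\eta-\sum_{i\leq n}\eta_i]_{L(G)}\leq\sum_{i>n}\mu(X_i)\to0$ precisely because $\sum_i\mu(X_i)=\mu_U(F)<\infty$; hence the algebraic sum is $d_{L(G)}$-dense in $\Gamma^{(2)}(U)$, its inclusion is a $\dim_{L(G)}$-isomorphism (\cite[Lemma 1.1]{neshveyev}), and your argument then closes. (Alternatively, identify the extended dimension of the Hilbert module $\Gamma^{(2)}(U)$ with its von Neumann dimension $\mu_U(F)$ --- note that your per-summand equality $\dim_{L(G)}L^2(G)\mathbbm{1}_{X_i}=\mu(X_i)$ already tacitly uses this identification, see \cite[\S\S 6.1]{luck:survey} --- and run your injectivity-plus-additivity count globally rather than summand by summand.)
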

\begin{proof}
By Lemma \ref{lem:quasi-periodic} (or more precisely its proof), we may assume that $U = \bigsqcup_{i=1}^\infty G \cdot X_i$. Consider the projective $\mathbb{C}[G]$-module $P := \bigoplus_{i \geq 1} \mathbb{C}[G]\,\mathbbm{1}_{X_i}$ sitting inside $\Gamma^b(U)$.

(1) Take $f \in \Gamma(U)$. For each $m \geq 1$, define $Y_m := \{ x \in X\, | \, \supp f \cap \pi_U^{-1}(x) \subset \bigsqcup_{i=1}^m G \cdot X_i \}$. Then $\{ Y_m \}_m$ is an increasing sequence satisfying $\mu(X \setminus \bigcup_{m = 1}^\infty Y_m) = 0$, and hence $d_{L^\infty(X)}( \mathbbm{1}_{Y_m} f, f) \leq \mu(Y_m^c) \to 0$ as $m \to \infty$. Note that $\mathbbm{1}_{Y_m} f$ is supported in $\bigsqcup_{i = 1}^m G \cdot X_i$. Hence $P$ is $d_{L^\infty(X)}$-dense in $\Gamma(U)$, because so is $\mathbb{C}[G]$ in $\Gamma(G)$ as shown below. Take $f \in \Gamma(G)$. Let us decompose $G$ into one-sheeted sets $G = \bigsqcup_{i = 1}^\infty E_i$; see \S2. For each $m \geq 1$, define $Z_m$ to be the set of $x \in X$ satisfying $\sup_{g \in s^{-1}(x)} | f(g) | \leq m$ and $(\supp f \cap s^{-1}(x)) \subset \bigcup_{i=1}^m E_i \cap s^{-1}(x)$. Clearly, $\mathbbm{1}_{Z_m} f \in \mathbb{C}[G]$ converges to $f$ in $d_{L^\infty(X)}$. Consequently, we have seen that $P$ is a desired projective $\mathbb{C} [G]$-module.

(2) We have $\Gamma^{(2)}(U) = \sideset{}{^\oplus_{i \geq1}} \sum L^2(G) \mathbbm{1}_{X_i}$, see \S\S 3.1. With $L(G)\otimes_{\mathbb{C}[G]} P = \bigoplus_{i\geq1} L(G)\mathbbm{1}_{X_i}$ naturally, the restriction $\tilde{h}$ of $h$ to $L(G)\otimes_{\mathbb{C}[G]}P$ is exactly the inclusion $ \bigoplus_{i \geq 1} L(G) \mathbbm{1}_{X_i} \hookrightarrow \Gamma^{(2)}(U)$. Thanks to the $d_{L(G)}$-density of $L(G)$ in $L^2(G)$ together with $\sum_{i=1}^\infty \mu(X_i) = \mu_U(F) < +\infty$, it is plain to see that $\bigoplus_{i\geq1}L(G)\mathbbm{1}_{X_i}$ is $d_{L(G)}$-dense in $\sum_{i\geq1}^\oplus L^2(G)\mathbbm{1}_{X_i}$ so that $\tilde{h}$ is a $\dim_{L(G)}$-isomorphism. Since $P$ is $d_{L^\infty(X)}$-dense in $\Gamma^b(U)$ as we actually saw in the above (1), the inclusion $P \hookrightarrow \Gamma^b(U)$ is $\dim_{L^\infty(X)}$-isomorphic, and hence so is $L(G) \otimes_{\mathbb{C}[G]} P \hookrightarrow L(G) \otimes_{\mathbb{C}[G]} \Gamma^b(U)$ by Lemma \ref{lem:alg1}. Therefore, by applying the functor $c_{L(G)}$ to $\tilde{h}$ we conclude that $h$ is a $\dim_{L(G)}$-isomorphism. 
\end{proof}

Since $C_n^\star(\Sigma)$ is defined as a subspace of $\Gamma^\star(\Sigma^{(n)})$, we need the following lemma. 

\begin{lem}\label{lem:auxiliary}
Let $\Sigma$ be a simplicial $G$-complex. Then,
\begin{enumerate}
\item every $C_n(\Sigma)$ has a $d_{L^\infty(X)}$-dense projective $\mathbb{C} [G]$-submodule;
\item if $\Sigma$ is ULB, then the $\mathbb{C} [G]$-map $L(G) \otimes_{\mathbb{C} [G]} C_n^b(\Sigma) \to C_n^{(2)}(\Sigma)$ sending $m \otimes \xi$ to $m \cdot \xi$ is a $\dim_{L(G)}$-isomorphism for every $n \geq 0$. 
\end{enumerate}
\end{lem}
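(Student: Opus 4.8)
The plan is to exploit the \emph{antisymmetrization} operator
$A_n := \frac{1}{(n+1)!}\sum_{\sigma\in S_{n+1}}(\mathrm{sgn}\,\sigma)\,\sigma$,
where $\sigma\in S_{n+1}$ acts on $\Gamma^\star(\Sigma^{(n)})$ by permuting the coordinates of simplices, $(\sigma f)(u)=f(\sigma^{-1}u)$. First I would record its basic properties. Since the coordinate permutations commute with the diagonal $G$-action and fix the base point $\pi_{\Sigma^{(n)}}(u)$, each $\sigma$ is a $\mathbb{C}[G]$-linear (on the $\star\in\{b,(2)\}$ levels, $L(G)$-linear after extension) bijection that preserves the rank seminorm $[\,\cdot\,]_{L^\infty(X)}$; on $\Gamma^{(2)}(\Sigma^{(n)})=L^2$ it is unitary with $\sigma^*=\sigma^{-1}$. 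Consequently $A_n$ is an idempotent whose range is exactly $C_n^\star(\Sigma)$, it is $\mathbb{C}[G]$-linear (resp.\ $L(G)$-linear on the $(2)$-level), it is $d_{L^\infty(X)}$-Lipschitz, and on $\Gamma^{(2)}$ it is the orthogonal projection onto $C_n^{(2)}(\Sigma)$. This yields, on each level, a direct-sum decomposition $\Gamma^\star(\Sigma^{(n)})=C_n^\star(\Sigma)\oplus\ker A_n$ of $\mathbb{C}[G]$- (resp.\ $L(G)$-) modules.

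For part (2) this reduces the claim to Lemma \ref{lemformain1}(2). When $\Sigma$ is ULB, $\Sigma^{(n)}$ has a fundamental domain of finite measure (see \S3.1), so the map $h_\Gamma:L(G)\otimes_{\mathbb{C}[G]}\Gamma^b(\Sigma^{(n)})\to\Gamma^{(2)}(\Sigma^{(n)})$ is a $\dim_{L(G)}$-isomorphism by Lemma \ref{lemformain1}(2). Because $A_n$ commutes with the $L(G)$-action, $h_\Gamma$ intertwines the idempotents $\mathrm{id}\otimes A_n$ and $A_n$, hence splits as a direct sum $h_{C_n}\oplus h_{\ker}$ under the decompositions $L(G)\otimes_{\mathbb{C}[G]}\Gamma^b=(L(G)\otimes_{\mathbb{C}[G]}C_n^b)\oplus(L(G)\otimes_{\mathbb{C}[G]}\ker A_n)$ and $\Gamma^{(2)}=C_n^{(2)}\oplus\ker A_n$. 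Since $\dim_{L(G)}$ is additive on direct sums and $\ker h_\Gamma$, $\coker h_\Gamma$ have dimension zero, the same holds for the summand $h_{C_n}$; that is, $h_{C_n}$ is a $\dim_{L(G)}$-isomorphism, which is (2).

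Part (1) is the delicate point. The naive guess $A_n(P)$, with $P=\bigoplus_i\mathbb{C}[G]\mathbbm{1}_{X_i}$ the projective dense submodule of $\Gamma(\Sigma^{(n)})$ from Lemma \ref{lemformain1}(1), is $d_{L^\infty(X)}$-dense in $C_n(\Sigma)$ (apply the Lipschitz map $A_n$ to a dense set and use $A_nf=f$ for $f\in C_n(\Sigma)$), but its projectivity is not automatic: a priori $A_n\mathbbm{1}_{X_i}$ need not lie in $P$, since a $\sigma$ may carry the fundamental-domain piece $F_i$ across infinitely many of the sheets $G\cdot X_j$. The key step is therefore a measurable refinement: starting from $\Sigma^{(n)}=\bigsqcup_i G\cdot F_i$ (Lemma \ref{lem:quasi-periodic}), I would refine the countable Borel partition $\{F_i\}$ of the fundamental domain so that, for every $\sigma\in S_{n+1}$ and every $i$, the translate $\sigma F_i$ is contained in a single sheet $G\cdot F_{j(\sigma,i)}$; this is possible because the index of the sheet containing $\sigma u$ is a Borel function of $u$ taking countably many values. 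After this refinement each $\sigma F_i$ is a Borel section meeting every $G$-orbit at most once (as $\sigma F$ is again a fundamental domain), hence corresponds to a one-sheeted set, so $\mathbbm{1}_{\sigma F_i}\in\mathbb{C}[G]\mathbbm{1}_{X_{j(\sigma,i)}}\subseteq P$. Thus $A_n\mathbbm{1}_{X_i}\in P$ for every $i$, so $P$ is $A_n$-invariant, and $A_n(P)=A_nP$ is the range of an idempotent endomorphism of the projective module $P$, hence a direct summand of $P$ and therefore projective. Together with the density already noted, this produces the required $d_{L^\infty(X)}$-dense projective $\mathbb{C}[G]$-submodule of $C_n(\Sigma)$.

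I expect the refinement making $P$ invariant under $A_n$ to be the main obstacle: it is exactly what tames the `orientation' phenomenon (simplices whose $G$-orbit carries an orientation-reversing self-symmetry), which is absorbed harmlessly once projectivity is reduced to the principle that a direct summand of a projective module is projective.
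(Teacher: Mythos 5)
Your proof is correct, and its skeleton---the antisymmetrization idempotent $A_n$ combined with Lemma \ref{lemformain1}---is the same as the paper's. For part (2) the difference is only cosmetic: you split the comparison map along the idempotents $\id\otimes A_n$ and $A_n$ and invoke additivity of $\dim_{L(G)}$, whereas the paper applies the rank-completion functor $c_{L(G)}$ to the same retraction; both are ways of transporting Lemma \ref{lemformain1}(2) (available since ULB gives $\Sigma^{(n)}$ a finite-measure fundamental domain) to the antisymmetric summand. The substantive difference is in part (1). The paper simply asserts that $A_n(P)$ is projective, and its stated justification (that $A_n$ fixes $C_n(\Sigma)$ pointwise and is $d_{L^\infty(X)}$-contractive) covers only density; you correctly observe that projectivity is not automatic, since for an arbitrary partition $\{F_i\}$ of the fundamental domain the set $\sigma F_i$ may spread over infinitely many sheets $G\cdot F_j$, so that $A_n\mathbbm{1}_{X_i}$ is an infinite sum across sheets and need not lie in $P$ at all, while images of projective modules under module maps need not be projective. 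Your equivariant refinement repairs exactly this point, and such a refinement does exist: partition $F$ into the level sets of the full itinerary map $u\mapsto\bigl(j(\sigma u)\bigr)_{\sigma\in\mathfrak{S}_{n+1}}$, where $j(v)$ is the index of the sheet containing $v$. Because the sheets are $G$-invariant and the permutation action commutes with the $G$-action, the itinerary of the $F$-representative of $G\cdot\sigma u$ is determined by the itinerary of $u$ together with $\sigma$, so every $\sigma$ carries each level set into a single sheet of the \emph{refined} decomposition. (This one extra remark is needed: a per-$\sigma$, one-step refinement, which is what your justification literally describes, is not enough by itself, since refining changes the sheets; taking the level sets of the itinerary over all $\sigma$ simultaneously settles it.) Granting this, your remaining steps---$\sigma F_i$ corresponds to a one-sheeted set by freeness of the $G$-action and the fundamental-domain property, hence $P$ is $A_n$-invariant and $A_n(P)$ is a direct summand of the projective module $P$---are all valid. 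In short, you follow the paper's route, but your treatment of (1) supplies a projectivity argument that the paper omits.
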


\begin{proof}
For a given function $f : \Sigma^{(n)} \to \mathbb{C}$, define the function $A_n f$ on $\Sigma^{(n)}$ by $(A_n f)(u) = ((n + 1) !)^{-1} \sum_{\sigma \in \mathfrak{S}_{n+1}} (\mathrm{sgn} \sigma) f(\sigma^{-1} u)$. Clearly, $A_n$ defines a $\mathbb{C}[G]$-module map $\Gamma^\star(\Sigma^{(n)})$ to $C_n^\star(\Sigma)$ that acts on $C_n^\star(\Sigma)$ trivially. 

(1) By Lemma \ref{lemformain1}, $\Gamma(\Sigma^{(n)})$ has a $d_{L^\infty(X)}$-dense projective $\mathbb{C}[G]$-submodule $P$. Therefore, $A_n(P)$ is a desired $d_{L^\infty(X)}$-dense projective $\mathbb{C}[G]$-submodule of $C_n(\Sigma)$ since $A_n$ acts $C_n(\Sigma)$ trivially and is contractive in $d_{L^\infty(X)}$. 

(2) It is plain to see that $\id \otimes A_n : L(G) \otimes_{\mathbb{C}[G]} \Gamma^b(\Sigma^{(n)}) \to L(G) \otimes_{\mathbb{C}[G]} C_n^b(\Sigma)$ is an $L(G)$-module map that acts on $L(G) \otimes_{\mathbb{C}[G]} C_n^b(\Sigma)$ trivially. Thus, applying the functor $c_{L(G)}$ and using Lemma \ref{lemformain1}, we conclude that the map $L(G) \otimes_{\mathbb{C}[G]} C_n^b(\Sigma) \to C_n^{(2)}(\Sigma)$ is a $\dim_{L(G)}$-isomorphism.   
\end{proof}

Note that since $C_n^{(2)}(\Sigma)$ is the image of the projection $A_n$, we have $\dim_{L(G)} C_n^{(2)}(\Sigma) = ((n + 1) !)^{-1} \mu_{\Sigma^{(n)}}(G \backslash \Sigma^{(n)} )$; here $G \backslash \Sigma^{(n)}$ denotes a fundamental domain of $\Sigma^{(n)}$. In particular, if $\Sigma$ is ULB, then $\dim_{L(G)} C_n^{(2)}(\Sigma)$ is finite for every $n \geq 0$.

Here is the proof of Proposition \ref{mainthm1}.

\begin{proof} (Proposition \ref{mainthm1})
First, consider the case when $\Sigma$ is ULB. The $\im \partial_{n+1}^{(2)}$ and its closure have the same $M$-dimension since the $\partial_{n+1}^{(2)} {\partial_{n+1}^{(2)}}^\ast$ maps $\overline{\im \partial_{n+1}^{(2)}}$ to $\im \partial_{n+1}^{(2)}$ injectively. Thus, one can see that the canonical surjection $q : H_n(C_\bullet^{(2)}(\Sigma)) \to \overline{H}_n^{(2)}(\Sigma,G)$ is a $\dim_{L(G)}$-isomorphism. Since $\Sigma$ is ULB, Lemma \ref{lem:auxiliary} enables us to obtain a $\dim_{L(G)}$-isomorphism $h : L(G) \otimes_{\mathbb{C}[G]} C_n^b(\Sigma) \to C_n^{(2)}(\Sigma)$ so that the induced $L(G)$-map $h_* : H_n(L(G)\otimes_{\mathbb{C}[G]} C_\bullet^b(\Sigma)) \to H_n(C_\bullet^{(2)}(\Sigma))$ is a $\dim_{L(G)}$-isomorphism for every $n \geq 0$. Thus, $q \circ h_* : H_n(L(G) \otimes_{\mathbb{C}[G]}C_\bullet^b(\Sigma)) \to \overline{H}_n^{(2)}(\Sigma,G)$ is a $\dim_{L(G)}$-isomorphism for every $n \geq 0$.

Next, consider the case when $\Sigma$ is an arbitrary simplicial $G$-complex. Let $\{ \Sigma_i \}_{i \geq 1}$ be a ULB-exhaustion of $\Sigma$. By what we have actually proved in the previous paragraph, together with the continuity of $\dim_{L(G)}$ under inductive limit (\cite[Theorem 6.13]{luck:survey}), we have $\beta_n^{(2)}(\Sigma,\{\Sigma_i\}_{i \geq 1}, G) = \dim_{L(G)} H_n(L(G)\otimes_{\mathbb{C}[G]} \bigcup_{i \geq 1} C_\bullet^b(\Sigma_i))$. Since $\bigcup_{i \geq 1}C_n^b(\Sigma_i)$ is $d_{L^\infty(X)}$-dense in $C_n^b(\Sigma)$, Lemma \ref{lem:alg1} shows that the last quantity equals $\dim_{L(G)} H_n(L(G) \otimes_{\mathbb{C}[G]} C_\bullet^b(\Sigma))$. Hence the proof of the first equality is completed. 

The second equality immediately follows from the $d_{L^\infty(X)}$-density of $C_\bullet^b(\Sigma)$ in $C_\bullet(\Sigma)$ and Lemma \ref{lem:alg1}. 
\end{proof}
 
We prove Theorem \ref{mainthm2} using Proposition \ref{mainthm1}. This will be done by showing the exactness of the chain complex $\dots \stackrel{\partial_2}{\rightarrow} C_1(\Sigma) \stackrel{\partial_1}{\rightarrow} C_0(\Sigma) \stackrel{\epsilon}{\rightarrow} M(X) \to 0$ of $\mathbb{C}[G]$-modules for a contractible, simplicial $G$-complex $\Sigma$; 
here $M(X)$ denotes the space of measurable functions on $X$ and $\epsilon$ denotes the $\mathbb{C} [G]$-module map defined by $\epsilon(f)(u) := \sum_{u \in \Sigma_x^{(0)}} f(u)$. 

To this end, we provide a terminology and lemmas. Let $V$ be a vector space over $\mathbb{Q}$ of countable dimension. We endow $V$ with the discrete Borel structure. A family $\{ V_x \}_{x \in X}$ of subspaces of $V$ is said to be {\it measurable} if for any measurable map $s : X \to V$, the set $\{ x \in X \, | \, s(x) \in V_x \}$ is measurable. A family $\{ T_x \}_{x \in X}$ of ($\mathbb{Q}$-linear) operators on $V$ is said to be {\it measurable} if for any measurable map $s : X \to V$, the map $X \ni x \mapsto T_x s(x) \in V$ is measurable. We can check that the measurability of a family $\{V_x\}_{x \in X}$ (resp. $\{T_x\}_{x \in X}$) is equivalent to that of the map $X \ni x \mapsto V_x \in 2^V$ (resp. $X \ni x \mapsto T_x \in V^V$). We quote two lemmas from \cite{neshveyev}.

\begin{lem}\label{lem:proj}(\cite[Lemma 2.4]{neshveyev})
If $\{V_x\}_{x \in X}$ is a measurable family of subspaces of $V$, then there exists a measurable family $\{ p_x \}_{x \in X}$ of projections onto $V_x$.
\end{lem}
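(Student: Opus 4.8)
The plan is to give an explicit, algorithmic construction, taking advantage of the fact that a $\mathbb{Q}$-vector space of countable dimension is a \emph{countable} set. By the equivalence recalled just above, the hypothesis amounts to saying that the map $x \mapsto V_x \in 2^V$ is measurable, i.e.\ that $\{x \in X : v \in V_x\}$ is measurable for each fixed $v \in V$ (apply the defining condition to the constant section $s \equiv v$). Fix once and for all an enumeration $V = \{v_1, v_2, \dots\}$. I would build, measurably in $x$, a basis of $V$ adapted to a splitting $V = V_x \oplus U_x$ for a suitable complement $U_x$, and then take $p_x$ to be the projection onto $V_x$ along $U_x$.

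Concretely, for each subspace $W$ I would run the following greedy sweep through $v_1, v_2, \dots$, producing at stage $i$ a decision $\alpha_i(W) \in \{\mathsf{in}, \mathsf{comp}, \mathsf{skip}\}$: put $\alpha_i(W) = \mathsf{in}$ if $v_i \in W$ and $v_i$ is not in the $\mathbb{Q}$-span of the previously $\mathsf{in}$-labelled vectors; otherwise put $\alpha_i(W) = \mathsf{comp}$ if $v_i$ is not in the span of \emph{all} previously $\mathsf{in}$- or $\mathsf{comp}$-labelled vectors; otherwise $\mathsf{skip}$. The $\mathsf{in}$-vectors then form a basis of $W$ and the $\mathsf{comp}$-vectors a basis of a complement $U_W$, so that $V = W \oplus U_W$. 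The key measurability point is that each $\alpha_i$ is a Borel function of $W$, proved by induction on $i$: the finite tuple $(\alpha_1(W), \dots, \alpha_{i-1}(W))$ takes only $3^{\,i-1}$ values, partitioning the (Borel) set of subspaces into finitely many Borel pieces, and on each such piece the previously labelled vectors form a \emph{fixed} finite subset of the $v_j$, so the span-membership tests defining $\alpha_i$ reduce to fixed truth-values while the only $W$-dependent condition, namely ``$v_i \in W$'', is measurable by hypothesis.

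Finally I would read off $p_W$ and verify that $x \mapsto p_{V_x}$ is a measurable family. For a fixed $v \in V$ choose $M$ with $v \in \mathrm{span}_{\mathbb{Q}}(v_1, \dots, v_M)$; the labelled vectors among $v_1, \dots, v_M$ already span $\mathrm{span}_{\mathbb{Q}}(v_1, \dots, v_M) \ni v$, with the $\mathsf{in}$-ones lying in $W$ and the $\mathsf{comp}$-ones in $U_W$, so $p_W(v)$ (the $W$-component) is determined by the tuple $(\alpha_1(W), \dots, \alpha_M(W))$ alone. Hence $x \mapsto p_{V_x}(v)$ is constant on each of the finitely many Borel pieces cut out by this tuple, thus Borel, and letting $v$ range over $V$ shows that $\{p_{V_x}\}_{x \in X}$ is a measurable family of projections onto $V_x$. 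The main obstacle is precisely this measurability bookkeeping: one must make the greedy selection canonical enough that each decision depends Borel-measurably on $W$, and then exploit the finite-support observation that $p_W(v)$ depends on only finitely many of the decisions. Once the construction is arranged this way, both points follow from the finiteness of the decision alphabet together with the countability of $V$.
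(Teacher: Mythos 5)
Your reduction of the measurability hypothesis to measurability of the sets $\{x \in X : v \in V_x\}$ for fixed $v$ is correct, and the idea of a greedy labelling whose decisions are finitely determined is a reasonable strategy (the paper itself offers no proof of this lemma --- it quotes it from Neshveyev--Rustad --- so your argument has to stand on its own). But the labelling rule you wrote down does not produce a complement: the $\mathsf{comp}$-test compares $v_i$ against the span of the previously \emph{labelled} vectors, and at a finite stage the $\mathsf{in}$-vectors need not yet span the relevant part of $W$, since elements of $W$ can occur arbitrarily late in the enumeration. Concretely, take $V = \mathbb{Q}^2$, $W = \mathbb{Q}(e_1+e_2)$ (a constant, hence measurable, family $V_x \equiv W$), and any enumeration beginning $v_1 = e_1$, $v_2 = e_2$, $v_3 = e_1+e_2$. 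Your sweep labels $v_1$ and $v_2$ as $\mathsf{comp}$ (neither lies in $W$, and neither lies in the span of previously labelled vectors) and then labels $v_3$ as $\mathsf{in}$ (it lies in $W$, and the set of previous $\mathsf{in}$-vectors is empty). The $\mathsf{in}$-vectors do form a basis of $W$, but the $\mathsf{comp}$-vectors span $U_W = V$, so $V = W \oplus U_W$ fails and ``the projection onto $W$ along $U_W$'' is undefined; indeed your final recipe would assign $p_W(e_1+e_2)$ both the value $e_1+e_2$ (reading it as an $\mathsf{in}$-vector) and the value $0$ (writing it as $v_1+v_2$, a sum of $\mathsf{comp}$-vectors).

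The gap is repairable, but the repair forces your bookkeeping to pass from ``finite'' to ``countable'' decompositions. Replace the $\mathsf{comp}$-test by: $\alpha_i(W) = \mathsf{comp}$ iff $v_i \notin W + \mathrm{span}_{\mathbb{Q}}(\text{previously } \mathsf{comp}\text{-labelled vectors})$. An easy induction then shows that the sum $W + \mathrm{span}_{\mathbb{Q}}(\mathsf{comp}\text{-vectors})$ is direct and exhausts $V$, so $p_W$ is well defined. Measurability of the new test survives: on a piece of $X$ where the previous decisions are constant, with $\mathsf{comp}$-vectors $c_1,\dots,c_k$, the condition reads $v_i \in V_x + \mathrm{span}_{\mathbb{Q}}(c_1,\dots,c_k)$, i.e.\ $\bigcup_{(q_1,\dots,q_k) \in \mathbb{Q}^k} \{x : v_i - \sum_j q_j c_j \in V_x\}$, a countable union of sets that are measurable by hypothesis. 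Similarly, your claim that $p_W(v)$ is determined by the label tuple alone must be weakened: for a skipped $v_i$ one has $p_{V_x}(v_i) = v_i - u_i(x)$, where $u_i(x)$ is the unique element $u \in \mathrm{span}_{\mathbb{Q}}(c_1,\dots,c_k)$ with $v_i - u \in V_x$ (uniqueness by directness of the sum); since $u$ ranges over a countable set and each set $\{x : v_i - u \in V_x\}$ is measurable, the map $x \mapsto u_i(x)$, and hence $x \mapsto p_{V_x}(v)$, is measurable. With these two corrections your construction gives a valid proof.
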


\begin{lem}\label{lem:homotopy}(\cite[Lemma 2.5]{neshveyev})
Let $\{T_x \}_{x \in X}$, $\{p_x \}_{x \in X}$ and $\{q_x \}_{x \in X}$ are measurable families of operators on $V$ such that the $p_x$ and the $q_x$ are projections. Assume that, for every $x \in X$, the map $T_x$ maps $\ker q_x$ to $\im p_x$ bijectively. Let $S_x$ denotes the operator on $V = \ker p_x \bigoplus \im p_x$ defined by $S_x \upharpoonright_{\ker p_x} = 0$ and $S_x \upharpoonright_{\im p_x} = (T_x \upharpoonright_{\ker q_x})^{-1}$, so that $T_x S_x = p_x$ and $S_x T_x = \id_V - q_x$. Then the family $\{ S_x \}_{x \in X}$ is measurable. 
\end{lem}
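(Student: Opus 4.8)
The plan is to reduce the statement to a pointwise measurable selection carried out along an enumeration of the countable set $V$. First I would note that it suffices to prove that $x \mapsto S_x v$ is measurable for each \emph{fixed} $v \in V$. Indeed, given an arbitrary measurable map $s : X \to V$, the fibres $s^{-1}(v)$, $v \in V$, form a countable measurable partition of $X$ on each piece of which $s$ is constant; hence $x \mapsto S_x s(x)$ is measurable as soon as every $x \mapsto S_x v$ is. So from now on fix $v \in V$.

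Next I would exploit the splitting $V = \ker p_x \oplus \im p_x$ afforded by $p_x^2 = p_x$ to rewrite $S_x v$. Since $p_x (v - p_x v) = 0$, the vector $v - p_x v$ lies in $\ker p_x$, on which $S_x$ vanishes, so $S_x v = S_x (p_x v)$. As $p_x v \in \im p_x$, the definition of $S_x$ gives $S_x(p_x v) = (T_x\!\upharpoonright_{\ker q_x})^{-1}(p_x v)$, which is the \emph{unique} vector $w \in \ker q_x$ satisfying $T_x w = p_x v$; existence and uniqueness of $w$ are guaranteed precisely by the hypothesis that $T_x$ restricts to a bijection $\ker q_x \to \im p_x$.

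Then I would identify every fibre of $x \mapsto S_x v$ through a fixed (bijective) enumeration $V = \{ v_1, v_2, \dots \}$. The key observation is the identity
\[
\{ x \in X \,|\, S_x v = v_n \} = \{ x \in X \,|\, q_x v_n = 0,\ T_x v_n = p_x v \}
\]
for each $n$: if $v_n \in \ker q_x$ and $T_x v_n = p_x v$, then injectivity of $T_x$ on $\ker q_x$ forces $v_n$ to be the unique preimage, i.e.\ $v_n = S_x v$, and the converse is immediate from the description of $S_x v$ obtained in the previous step. The right-hand set is measurable because $\{q_x\}$ and $\{T_x\}$ being measurable families makes $x \mapsto q_x v_n$ and $x \mapsto T_x v_n$ measurable (apply the defining property to the constant map $x \mapsto v_n$), $x \mapsto p_x v$ is measurable for the same reason, and — since $V$ carries the discrete Borel structure — both the vanishing condition $q_x v_n = 0$ and the equality $T_x v_n = p_x v$ cut out measurable subsets of $X$. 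Letting $n$ range over the enumeration exhibits each fibre of $x \mapsto S_x v$ as measurable, which is exactly the measurability of this map into the discrete space $V$.

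The argument is essentially measurable-selection bookkeeping, so I do not anticipate a serious obstacle; the only point needing care is the very first reduction, namely confirming that evaluating the families $q_x, T_x, p_x$ on constant vectors and testing equality in $V$ genuinely preserve measurability. This is where the hypothesis that $\{T_x\}, \{p_x\}, \{q_x\}$ are measurable families is used, together with the elementary remark that equality of two measurable $V$-valued maps is a measurable condition because $V$ is countable.
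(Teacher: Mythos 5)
Your proposal is correct, and there is nothing in the paper to compare it against: the paper states this lemma without proof, quoting it verbatim from \cite[Lemma 2.5]{neshveyev}. Your argument --- reducing to constant maps via the countability of $V$ and then identifying each fibre as $\{ x \in X \mid S_x v = v_n \} = \{ x \in X \mid q_x v_n = 0 \text{ and } T_x v_n = p_x v \}$, which is measurable since evaluation of the measurable families at fixed vectors is measurable and equality in the countable discrete space $V$ is a measurable condition --- is precisely the standard fibrewise computation behind the cited result, and it is complete as written.
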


The next lemma is just a translation of \cite[Proposition 2.6]{neshveyev} into our situation. However, we do give its proof for the sake of completeness.

\begin{lem} \label{resolution}
Let $\Sigma$ be a contractible, simplicial $G$-complex. Then the sequence 
$$
\dots \stackrel{\partial_2}{\rightarrow} C_1(\Sigma) \stackrel{\partial_1}{\rightarrow} C_0(\Sigma) \stackrel{\epsilon}{\rightarrow} M(X) \to 0
$$
is contractible as a chain complex of $L^\infty(X)$-modules.
\end{lem}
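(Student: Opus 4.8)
The plan is to produce an explicit $L^\infty(X)$-linear contracting homotopy for the augmented complex, that is, $L^\infty(X)$-module maps $h_{-1} : M(X) \to C_0(\Sigma)$ and $h_n : C_n(\Sigma) \to C_{n+1}(\Sigma)$ ($n \geq 0$) satisfying $\epsilon\, h_{-1} = \id$, $\partial_1 h_0 + h_{-1}\epsilon = \id$ and $\partial_{n+1}h_n + h_{n-1}\partial_n = \id$ for $n \geq 1$. Read fiberwise over $x$, these identities say exactly that $h_{\bullet,x}$ is a contracting homotopy of the augmented simplicial chain complex of the fiber complex $\Sigma_x$. Since $\Sigma$ is contractible, $\Sigma_x$ is contractible for $\mu$-a.e.~$x$, so its augmented chain complex is acyclic; being a bounded-below complex of free modules it is split, hence admits a contraction for each such $x$. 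The whole difficulty is therefore not the fiberwise existence but the measurability of the assignment $x \mapsto h_{\bullet,x}$, which is precisely what Lemmas \ref{lem:proj} and \ref{lem:homotopy} are built to furnish.

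First I set up the measurable framework. Using a decomposition $G = \bigsqcup_i E_i$ into one-sheeted sets (see \S2) and the partial isomorphisms $\varphi_{E_i}$, I obtain a measurable enumeration of each fiber $r^{-1}(x)$, and hence of the simplices $\Sigma^{(n)}_x \subseteq EG^{(n)}_x$, identifying the fiberwise chain groups with the degree-$n$ part of one fixed countable-dimensional $\mathbb{Q}$-vector space $V$ as in \S3.3. (The simplicial boundary maps and the augmentation have integer entries, so it is harmless to work over $\mathbb{Q}$ and extend scalars to $\mathbb{C}$, and then to $L^\infty(X)$, afterward.) With this coding the boundary maps $\{\partial_{n,x}\}_x$ and the augmentation $\{\epsilon_x\}_x$ form measurable families of operators on $V$, while the cycle subspaces $Z_{n,x} := \ker \partial_{n,x}$ and $\ker\epsilon_x$ form measurable families of subspaces. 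Acyclicity gives $Z_{n,x} = \im \partial_{n+1,x}$ and $\ker\epsilon_x = \im\partial_{1,x}$ for a.e.~$x$. By Lemma \ref{lem:proj} I then choose, for every $n \geq 0$, a measurable family $\{q_{n,x}\}_x$ of projections onto $Z_{n,x}$, and a measurable family of projections onto $\ker\epsilon_x$.

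Next I assemble the homotopy. For each $n \geq 1$ I apply Lemma \ref{lem:homotopy} with $T_x = \partial_{n,x}$, with $q_x = q_{n,x}$ the projection onto $\ker\partial_{n,x}$, and with $p_x = q_{n-1,x}$ the projection onto $\im\partial_{n,x} = Z_{n-1,x}$; since $\ker q_{n,x}$ is a complement of $\ker\partial_{n,x}$, the map $\partial_{n,x}$ carries $\ker q_{n,x}$ bijectively onto $\im p_x$, so the hypotheses hold and I obtain a measurable family $\{h_{n-1,x}\}_x$ with $\partial_{n,x}h_{n-1,x} = q_{n-1,x}$ and $h_{n-1,x}\partial_{n,x} = \id - q_{n,x}$. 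At the bottom I apply the same lemma to $T_x = \epsilon_x$, with $p_x = \id$ on the one-dimensional fiber of $M(X)$ and $q_x = q_{0,x}$, getting $h_{-1,x}$ with $\epsilon_x h_{-1,x} = \id$ and $h_{-1,x}\epsilon_x = \id - q_{0,x}$. Adding the two relations at consecutive degrees yields $\partial_{n+1,x}h_{n,x} + h_{n-1,x}\partial_{n,x} = q_{n,x} + (\id - q_{n,x}) = \id$, and likewise at the bottom, so $h_{\bullet,x}$ is a contraction for a.e.~$x$. Measurability of all the families means the $h_{\bullet,x}$ glue into genuine $L^\infty(X)$-module maps $h_\bullet$, which are $L^\infty(X)$-linear because they act fiberwise.

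The main obstacle is the measurable coding step at the start: to invoke the two quoted lemmas I must realize the varying fibers $\Sigma^{(n)}_x$ inside a single fixed countable $\mathbb{Q}$-structure in a Borel manner and verify that $x \mapsto \partial_{n,x}$ and $x \mapsto Z_{n,x}$ are measurable families in the exact sense of \S3.3; this rests on the one-sheeted decomposition of $G$ and the Lusin--Novikov type uniformization used throughout \S\S2--3. A secondary point to check is that the resulting $\mathbb{Q}$-linear operators really land in $C_{n+1}(\Sigma)$ rather than merely in $\Gamma(\Sigma^{(n+1)})$, i.e.~that fiberwise supports stay finite; this is exactly where working with the honest simplicial chains $C_\bullet(\Sigma)$ (and not with any $\ell^2$-completion) is essential, since no boundedness of $h_\bullet$ is claimed or required.
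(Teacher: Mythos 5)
Your proof is correct and follows essentially the same route as the paper's: code the fibers into one fixed countable-dimensional $\mathbb{Q}$-vector space, use Lemma \ref{lem:proj} to obtain measurable projections onto the cycle spaces, apply Lemma \ref{lem:homotopy} degreewise (with contractibility supplying its bijectivity hypothesis), and glue the measurable fiberwise homotopies into $L^\infty(X)$-linear maps, extending scalars from $\mathbb{Q}$ to $\mathbb{C}$ at the end. The only cosmetic difference is the coding step: the paper realizes $\Sigma^{(n)}$ inside the trivial fiber space $X\times\mathbb{N}$ directly via its Borel sections, rather than passing through $EG$ and the one-sheeted decomposition of $G$.
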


\begin{proof}
First, we consider the same sequence with rational coefficients. Let $V$ be the vector space which consists of finitely supported functions $f : \mathbb{N} \to \mathbb{Q}$. Clearly, $V$  is of countable dimension. Construct an embedding $C_n(\Sigma_x ; \mathbb{Q}) \to V$ for each $n \geq 0$ as follows: since $\Sigma^{(n)}$ can be written as a disjoint union of its Borel sections, we may regard $\Sigma^{(n)}$ as a fiber subspace of the trivial fiber space $X \times \mathbb{N}$. Then, each $\Sigma_x^{(n)}$ is a subset of $\{ x \} \times \mathbb{N}$. Thus, we can regard $C_n(\Sigma_x; \mathbb{Q})$ as $V$ naturally. It is not hard to see that $x \mapsto \ker \partial_{n, x} \subset C_n(\Sigma_x; \mathbb{Q})$ is measurable. Hence, applying Lemma \ref{lem:proj}, we get a measurable family $\{p_{n, x} \}_{x \in X}$ of projections onto $\ker \partial_{n, x}$. The contractibility of $\Sigma$ gurantees that $\partial_{n, x}$ maps $\ker p_{n +1, x}$ to $\im p_{n, x}$ bijectively. Thus, applying Lemma \ref{lem:homotopy}, we obtain measurable families $\{ h_{n, x} \}_{x \in X}$ ($n \geq -1$) of operators $h_{n, x} : C_n(\Sigma_x; \mathbb{Q}) \to C_{n + 1}(\Sigma_x; \mathbb{Q})$ satisfying 
\begin{equation}\label{h}
\mathrm{id} = h_{n - 1, x} \circ \partial_{n, x} + \partial_{n+1, x} \circ h_{n, x}
\end{equation} 
for every $n \geq -1$ (with $C_{-1}(\Sigma_x; \mathbb{Q}) = \mathbb{Q}$, $\partial_{0, x} = \epsilon_x$).

Next, consider the sequence with complex coefficients. By linearity we extend each $h_{n, x}$ to an operator from $C_n(\Sigma_x)$ to $C_{n+1}(\Sigma_x)$ with keeping Equation (\ref{h}). It is straightforward to check that the family $\{h_{n, x} \}_{x \in X}$ is measurable. Thus, the formula $(h_n f)(u) = (h_{n, x} f_x)(u)$ ($u \in \Sigma_x^{(n)}$) defines an operator $h_n : C_n(\Sigma) \to C_{n +1}(\Sigma)$. Equation (\ref{h}) implies $\id = h_{n-1} \circ \partial_n + \partial_{n+1} \circ h_n$, that is, $\{ h_n \}_{n \geq -1}$ is a chain homotopy from $\id$ to $0$.
\end{proof}

We are ready to prove Theorem \ref{mainthm2}.

\begin{proof} (Theorem \ref{mainthm2})  
Note that $L^\infty(X)$ is $d_{L^\infty(X)}$-dense in $M(X)$, and hence the inclusion map $L^\infty(X) \hookrightarrow M(X)$ is $\dim_{L^\infty(X)}$-isomorphic so that the associated $L(G)$-map from $\Tor_n^{\mathbb{C}[G]}(L(G),L^\infty(X))$ to $\Tor_n^{\mathbb{C}[G]}(L(G),M(X))$ is also $\dim_{L(G)}$-isomorphic for every $n \geq 0$. Therefore, $\beta_n^{(2)}(G) = \dim_{L(G)} \Tor_n^{\mathbb{C}[G]} (L(G),M(X))$. With Lemma \ref{lem:alg2} and Lemma \ref{lem:auxiliary} (1), the resolution of $M(X)$ in Lemma \ref{resolution} enables us to compute
$$
\dim_{L(G)} \Tor_n^{\mathbb{C}[G]}(L(G),M(X)) = \dim_{L(G)} H_n (L(G) \otimes_{\mathbb{C}[G]} C_\bullet(\Sigma)),  
$$
which equals $\beta_n^{(2)}(\Sigma,G)$ by Proposition \ref{mainthm1}. 
\end{proof}

\begin{rem}
Berm\'udez \cite{bermudez} gave another expression of Sauer's $\beta_n^{(2)}(G)$ in terms of his generalization of the Connes-Shlyakhtenko $L^2$-Betti numbers \cite{connes-shlyakhtenko}. He defined, for an inclusion $A \subset B$ of unital $*$-algebras that is called a {\it tracial extension}, its $L^2$-Betti numbers denoted by $\beta_n^{(2)}(A / B)$. Every pmp discrete groupoid $G$ defines a tracial extension $L^\infty(X) \subset \mathbb{C} [G]$. He has proved that $\beta_n^{(2)}(\mathbb{C} [G] / L^\infty(X)) = \beta_n^{(2)}(G)$ holds for every $n \geq 0$ (\cite[Theorem 1.2]{bermudez}).
\end{rem}

\medskip
Since the universal complex $EG$ (see \S\S 3.1) is contractible, we have: 

\begin{cor} For every $n \geq 0$, we have $\beta_n^{(2)}(G) = \beta_n^{(2)}(EG,G)$. 
\end{cor}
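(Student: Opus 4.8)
The plan is to deduce the corollary directly from Theorem~\ref{mainthm2} by verifying its single hypothesis on the universal complex $EG$. Theorem~\ref{mainthm2} asserts that for \emph{any} contractible simplicial $G$-complex $\Sigma$ we have $\beta_n^{(2)}(\Sigma,G) = \beta_n^{(2)}(G)$; hence the only thing that needs checking is that $EG$ qualifies as such a complex. First I would recall that $EG$ was defined in \S\S 3.1 as a simplicial $G$-complex, and that the discussion immediately following its definition already observed that $EG$ is contractible (each fiber $EG_x$ being a simplex on a countable vertex set, which is contractible). Thus $EG$ satisfies the hypotheses of Theorem~\ref{mainthm2}.

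The one subtlety worth noting, and the step I expect to be the main (if mild) obstacle, is that $EG$ is \emph{not} ULB and is infinite-dimensional, so the quantity $\beta_n^{(2)}(EG,G)$ is \emph{a priori} defined only through a choice of ULB exhaustion via Definition~\ref{geometric-def}. This is exactly the gap that was anticipated: the well-definedness of $\beta_n^{(2)}(\Sigma,G)$ independently of the exhaustion is precisely the content of Proposition~\ref{mainthm1}, and the existence of some ULB exhaustion of $EG$ is guaranteed by Proposition~\ref{exhaustion}. So before applying Theorem~\ref{mainthm2} one invokes Proposition~\ref{exhaustion} to produce a ULB exhaustion of $EG$, and Proposition~\ref{mainthm1} to see that $\beta_n^{(2)}(EG,G)$ is a genuinely well-defined invariant of $EG$ (not depending on that choice).

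With these preliminaries in place the argument is immediate: apply Theorem~\ref{mainthm2} with $\Sigma = EG$ to obtain $\beta_n^{(2)}(EG,G) = \beta_n^{(2)}(G)$ for every $n \geq 0$, which is exactly the assertion of the corollary. In short, the corollary is a direct specialization of the main theorem of the subsection to the canonical contractible complex, and no new computation is required beyond recording that $EG$ meets the contractibility hypothesis and that Propositions~\ref{exhaustion} and~\ref{mainthm1} make $\beta_n^{(2)}(EG,G)$ meaningful. The proof therefore consists of a single sentence citing Theorem~\ref{mainthm2}, preceded by the observation already made in \S\S 3.1 that $EG$ is contractible.
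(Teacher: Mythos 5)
Your proposal is correct and follows exactly the paper's route: the paper's own proof consists of the single observation (already recorded in \S\S 3.1) that the universal complex $EG$ is contractible, followed by an application of Theorem \ref{mainthm2}. Your additional remarks invoking Proposition \ref{exhaustion} and Proposition \ref{mainthm1} to ensure that $\beta_n^{(2)}(EG,G)$ is well defined are sound but add nothing beyond what those results already establish in the paper's logical order.
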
 

As in the proof of Lemma \ref{resolution}, we can also prove the following:  

\begin{cor}\label{n-connected}
If $\Sigma$ is an $n$-connected, simplicial $G$-complex, i.e., $\Sigma_x$ is $n$-connected in the usual sense (see e.g.~\cite[Chapter 1, Section 8]{spanier}) for $\mu$-a.e.~$x$, then $\beta_k^{(2)}(\Sigma, G) = \beta_k^{(2)}(G)$ as long as $0 \leq k \leq n$, and moreover, $\beta_{n +1}^{(2)}(\Sigma, G) \geq \beta_{n+1}^{(2)}(G)$.
\end{cor}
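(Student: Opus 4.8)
The plan is to run the argument of Lemma \ref{resolution} only up to degree $n$, so as to convert the $n$-connectedness of $\Sigma$ into exactness of the augmented $\mathbb{C}[G]$-chain complex in low degrees, and then to read off the asserted (in)equalities by a degreewise comparison with a genuine resolution of $M(X)$. The topological input is the following: for $\mu$-a.e.\ $x$ the complex $\Sigma_x$ is $n$-connected, hence by the Hurewicz theorem $\widetilde{H}_k(\Sigma_x;\mathbb{Q}) = 0$ for $0 \le k \le n$, so the augmented pointwise complex $\cdots \to C_1(\Sigma_x;\mathbb{Q}) \to C_0(\Sigma_x;\mathbb{Q}) \xrightarrow{\epsilon_x} \mathbb{Q} \to 0$ is exact at $\mathbb{Q}$ and at $C_k(\Sigma_x;\mathbb{Q})$ for $0 \le k \le n$. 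Rerunning the construction in the proof of Lemma \ref{resolution} verbatim — measurable projections onto the kernels via Lemma \ref{lem:proj}, then measurable partial inverses via Lemma \ref{lem:homotopy} — now produces measurable families $\{h_{k,x}\}_{-1 \le k \le n}$ of contracting homotopies with $\id = h_{k-1,x}\circ\partial_{k,x} + \partial_{k+1,x}\circ h_{k,x}$ for $0 \le k \le n$; the construction can be carried out exactly in the range where pointwise exactness is available. Assembling the $h_{k,x}$ as in Lemma \ref{resolution} yields $L^\infty(X)$-module maps witnessing that the truncated sequence
\[
C_{n+1}(\Sigma) \xrightarrow{\partial_{n+1}} C_n(\Sigma) \to \cdots \to C_0(\Sigma) \xrightarrow{\epsilon} M(X) \to 0
\]
is exact at $M(X), C_0(\Sigma), \dots, C_n(\Sigma)$, and since the maps are $\mathbb{C}[G]$-module maps this is exactness of a sequence of $\mathbb{C}[G]$-modules. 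Writing $B_k := \ker\partial_k$ (with $\partial_0 := \epsilon$) and $\iota_k : B_k \hookrightarrow C_k(\Sigma)$ for the inclusion, I obtain short exact sequences $0 \to B_k \to C_k(\Sigma) \xrightarrow{\partial_k} B_{k-1} \to 0$ for $1 \le k \le n$ and $0 \to B_0 \to C_0(\Sigma) \xrightarrow{\epsilon} M(X) \to 0$, with $B_n = \im\partial_{n+1}$.

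Next I would splice this partial resolution to an honest one: choosing a free $\mathbb{C}[G]$-resolution $F_\bullet \to B_n$ and gluing $F_0 \twoheadrightarrow B_n \hookrightarrow C_n(\Sigma)$ produces a resolution $P_\bullet \to M(X)$ with $P_k = C_k(\Sigma)$ for $0 \le k \le n$ and $P_k$ projective for $k \ge n+1$. Every $P_k$ has a $d_{L^\infty(X)}$-dense projective $\mathbb{C}[G]$-submodule — the $C_k(\Sigma)$ by Lemma \ref{lem:auxiliary}(1), the projective ones being their own — so Lemma \ref{lem:alg2} (its hypotheses hold by Lemma \ref{lem:alg3}) gives $\dim_{L(G)}\Tor_k^{\mathbb{C}[G]}(L(G),M(X)) = \dim_{L(G)} H_k(L(G)\otimes_{\mathbb{C}[G]} P_\bullet)$ for all $k$. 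Recall also $\beta_k^{(2)}(G) = \dim_{L(G)}\Tor_k^{\mathbb{C}[G]}(L(G),M(X))$ from the proof of Theorem \ref{mainthm2}, and $\beta_k^{(2)}(\Sigma,G) = \dim_{L(G)} H_k(L(G)\otimes_{\mathbb{C}[G]} C_\bullet(\Sigma))$ by Proposition \ref{mainthm1}.

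I would then compare $L(G)\otimes_{\mathbb{C}[G]} P_\bullet$ with $L(G)\otimes_{\mathbb{C}[G]} C_\bullet(\Sigma)$ degreewise. For $0 \le k \le n-1$ the two complexes literally coincide in degrees $k-1,k,k+1$, so their $H_k$ agree (for $k=0$ only degrees $0,1$ intervene and the same holds). For $k=n$ the numerator $\ker(\id\otimes\partial_n)$ is common, while both degree-$(n+1)$ maps — namely $\id\otimes(F_0\to C_n(\Sigma))$ and $\id\otimes\partial_{n+1}$ — factor through surjections onto $B_n$, so by right-exactness of the tensor product both images equal $\im(\id_{L(G)}\otimes\iota_n)$; hence $H_n$ agree as well. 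This yields $\beta_k^{(2)}(\Sigma,G) = \beta_k^{(2)}(G)$ for $0 \le k \le n$.

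For the inequality I would argue directly on $C_\bullet(\Sigma)$. Factoring $\partial_{n+1} = \iota_n\circ\pi$ with $\pi : C_{n+1}(\Sigma)\twoheadrightarrow B_n$, the map $\id\otimes\pi$ sends $\ker(\id\otimes\partial_{n+1})$ onto $\ker(\id_{L(G)}\otimes\iota_n)$ and kills $\im(\id\otimes\partial_{n+2})$ (because $\pi\circ\partial_{n+2}=0$), so it descends to a surjection $H_{n+1}(L(G)\otimes_{\mathbb{C}[G]}C_\bullet(\Sigma)) \twoheadrightarrow \ker(\id_{L(G)}\otimes\iota_n)$, whence $\beta_{n+1}^{(2)}(\Sigma,G) \ge \dim_{L(G)}\ker(\id_{L(G)}\otimes\iota_n)$ by monotonicity of $\dim_{L(G)}$. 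Finally, since each $C_k(\Sigma)$ is $\dim_{L^\infty(X)}$-approximated by a projective submodule, Lemma \ref{lem:alg1} forces $\dim_{L(G)}\Tor_j^{\mathbb{C}[G]}(L(G),C_k(\Sigma)) = 0$ for $j\ge1$, and a dimension shift through the long exact $\Tor$-sequences of the short exact sequences above identifies $\dim_{L(G)}\ker(\id_{L(G)}\otimes\iota_n)$ with $\dim_{L(G)}\Tor_{n+1}^{\mathbb{C}[G]}(L(G),M(X)) = \beta_{n+1}^{(2)}(G)$, completing the proof. I expect the main obstacle to be precisely this degree-$n$ and degree-$(n+1)$ bookkeeping: because $L(G)\otimes_{\mathbb{C}[G]}-$ is only right exact one must reason at the level of images and kernels rather than pretend the two complexes are isomorphic, and must check that the comparison maps genuinely descend to homology; by contrast the measurable assembly in the first step is a routine rerun of Lemma \ref{resolution} once Hurewicz has converted connectivity into exactness.
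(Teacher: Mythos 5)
Your proposal is correct, and its first half coincides with the paper's own argument: both convert $n$-connectedness into pointwise exactness of the augmented complex, rerun the measurable-homotopy construction from the proof of Lemma \ref{resolution} to obtain exactness of $C_{n+1}(\Sigma) \to \cdots \to C_0(\Sigma) \to M(X) \to 0$ as $\mathbb{C}[G]$-modules, splice in a projective resolution, and compute dimensions via Lemma \ref{lem:alg2} together with Lemma \ref{lem:auxiliary} (1). The genuine difference is the splice point, and it changes how the inequality is obtained. The paper resolves $\ker\partial_{n+1}\subset C_{n+1}(\Sigma)$, so that $P_k=C_k(\Sigma)$ for all $k\le n+1$ and $P_k$ is projective only for $k\ge n+2$; then the two tensored complexes share degree $n+1$, and since $\im(\id\otimes\partial_{n+2})\subset\im\bigl(\id\otimes(P_{n+2}\to P_{n+1})\bigr)$, the inequality $\beta_{n+1}^{(2)}(\Sigma,G)\ge\beta_{n+1}^{(2)}(G)$ falls out as a one-line surjection $H_{n+1}(L(G)\otimes_{\mathbb{C}[G]}C_\bullet(\Sigma))\twoheadrightarrow H_{n+1}(L(G)\otimes_{\mathbb{C}[G]}P_\bullet)$. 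You instead resolve $B_n=\im\partial_{n+1}\subset C_n(\Sigma)$, so $P_{n+1}$ is already projective; the equalities for $k\le n$ come out just as easily (your image comparison via right-exactness is exactly right, and is the same verification the paper's degree-$(n+1)$ step needs anyway), but you must then recover $\beta_{n+1}^{(2)}(G)$ indirectly: a surjection onto $\ker(\id\otimes\iota_n)$, the Tor-vanishing $\dim_{L(G)}\Tor_j^{\mathbb{C}[G]}(L(G),C_k(\Sigma))=0$ for $j\ge1$ via Lemma \ref{lem:alg1} applied to a $d_{L^\infty(X)}$-dense projective submodule, and a dimension shift along the short exact sequences $0\to B_k\to C_k(\Sigma)\to B_{k-1}\to 0$ using additivity of $\dim_{L(G)}$. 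All of these steps are sound (I checked the index bookkeeping: the shift terminates at $\dim_{L(G)}\Tor_1(L(G),B_{n-1})=\dim_{L(G)}\ker(\id\otimes\iota_n)$, covering $n=0$ as well), so your route is a correct, if somewhat longer, alternative; the only thing the paper's choice of splice buys is that it makes your entire last paragraph unnecessary.
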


\begin{proof}
For $\mu$-a.e.~$x \in X$, the sequence 
$$C_{n+1}(\Sigma_x) \stackrel{\partial_{n+1, x}}{\to} \cdots \stackrel{\partial_{2, x}}{\to} C_1(\Sigma_x) \stackrel{\partial_{1, x}}{\to} C_0(\Sigma_x) \stackrel{\epsilon_x}{\to} \mathbb{C} \to 0$$
is exact since $\Sigma_x$ is $n$-connected. Then, by the proof of Lemma \ref{resolution}, we conclude that the sequence $C_{n+1}(\Sigma) \stackrel{\partial_{n+1}}{\to} \cdots \stackrel{\partial_{2}}{\to} C_1(\Sigma) \stackrel{\partial_{1}}{\to} C_0(\Sigma) \stackrel{\epsilon}{\to} M(X) \to 0$ is exact. Taking a projective $\mathbb{C}[G]$-resolution of $\ker \partial_{n+1}$, we get a resolution $P_\bullet$ of $M(X)$ such that $P_k$ is projective for every $k \geq n+2$. For every $k \leq n$, we have $H_k(L(G)\otimes_{\mathbb{C}[G]}P_\bullet) = H_k(L(G)\otimes_{\mathbb{C}[G]}C_\bullet(\Sigma))$, hence $\beta_k^{(2)}(G) = \beta_k^{(2)}(\Sigma,G)$. Since $\im \partial_{n+2} \subset \im (P_{n+2} \to P_{n+1})$, we get a surjective $L(G)$-map $H_{n+1}(L(G) \otimes_{\mathbb{C}[G]} C_\bullet(\Sigma)) \to H_{n+1}(L(G) \otimes_{\mathbb{C}[G]} P_\bullet)$; implying $\beta_{n+1}^{(2)}(\Sigma, G ) \geq \beta_{n+1}^{(2)}(G)$. 
\end{proof}

\section{Costs of pmp discrete groupoids} 

\subsection{Various definitions of costs and their equivalence}
We recall some definitions of costs of pmp discrete groupoids and prove their equvalence. 

\subsubsection{Measure theoretic approach}
This is a straightforward generalization of the Gaboriau's definition~\cite{gaboriau:cost} to pmp discrete groupoids. Let $\mathcal{E}$ be an at most countable family of elements of $\mathcal{G}_G$, the set of one-sheeted sets, see \S2. A non-empty element $E_1^{\epsilon_1} \cdots E_n^{\epsilon_n}$ with $E_i \in \mathcal{E}$, $\epsilon_i \in \{ 1, -1\}$ ($1 \leq i \leq n$) is called a {\it reduced word} in $\mathcal{E}$, if $E_i = E_{i+1}$ implies $\epsilon_i = \epsilon_{i +1}$ for every $1 \leq i \leq n$. Let $\mathrm{Wr}(\mathcal{E})$ denote the set of reduced words in $\mathcal{E}$. A family $\mathcal{E}$ is called a {\it graphing } of $G$ if it generates $G$ up to null set, namely 
$$\mu^G( G \setminus (X \cup \bigcup_{W \in \mathrm{Wr}(\mathcal{E})} W )) = 0$$ 
holds. The {\it cost} of a graphing $\mathcal{E}$ is defined to be 
$$C_\mu(\mathcal{E}) := \sum_{E \in \mathcal{E}} \mu^G(E) = \sum_{E \in \mathcal{E}} \mu(s(E)) = \sum_{E \in \mathcal{E}} \mu(r(E)),$$
and that of $G$ is defined to be $C_\mu(G) = \inf \{ C_\mu(\mathcal{E}) |\, \mathcal{E} : \text{graphing of }\, G \}$. 

There is another expression of costs used by Ab\'ert and Weiss \cite{abert-weiss}. A Borel subset $A \subset G$ is called a {\it generating set} of $G$ if $\mu^G( G \setminus ( \bigcup_{n \geq 1} (A \cup A^{-1} \cup X )^n ) = 0$ holds. Let $\tilde{C}_\mu(G)$ denote the number $\inf \{ \mu^G(A) \, | \, A : \text{ generating set of }\, G \}$ for temporarily. 

\begin{rem} $C_\mu(G) = \tilde{C}_\mu(G)$. 
\end{rem}
\begin{proof}
For any graphing $\mathcal{E}$ of $G$, the set $A_{\mathcal{E}} := \bigcup_{E \in \mathcal{E}} E$ is a generating set of $G$. Thus, we have $\tilde{C}_\mu(G) \leq \mu^G(A_{\mathcal{E}}) \leq \sum_{E \in \mathcal{E}} \mu^G(E) = C_\mu(\mathcal{E})$. Hence $\tilde{C}_\mu(G) \leq C_\mu(G)$. Conversely, take a generating set $A \subset G$. Let $G = \bigsqcup_{i \in I} E_i$ be a countable decomposition of $G$ into one-sheeted sets. Then $\mathcal{E}_A := \{ A \cap E_i \}_{i \in I}$ is a graphing of $G$. Thus, we have $C_\mu(G) \leq C_\mu(\mathcal{E}_A) = \sum_{i \in I} \mu^G(A \cap E_i) = \mu^G(A)$. Hence $C_\mu(G) \leq \tilde{C}_\mu(G)$.
\end{proof}
 
\subsubsection{Operator algebra approach} 
Let $(M, \tau)$ be a finite von Neumann algebra equipped with a faithful normal tracial state, $A$ be a commutative von Neumann subalgebra, and $E^M_A : M \to A$ be the $\tau$-preserving conditional expectation. The {\it normalizing groupoid} of $A$ in $M$ is defined to be the set $\mathcal{G}(M \supset A)$ of partial isometries $v \in M$ satisfying the following: (i) the support projection and the range projection belong to $A$; (ii) $v A v^\ast = A v v^\ast$. Let us recall the definition of $E^M_A$-groupoid, an operator algebraic counterpart of the set of one-sheeted sets. 

\begin{df} (\cite[Definition 2]{ueda})
An {\it $E^M_A$-groupoid} is a subset $\mathcal{G}$ of $\mathcal{G}(M \supset A)$ satisfying the following conditions: 
\begin{enumerate}
\item If $u$, $v \in \mathcal{G}$ then $u v \in \mathcal{G}$.
\item If $u \in \mathcal{G}$ then $u^{\ast} \in \mathcal{G}$.
\item Every partial isometry in $A$ belongs to $\mathcal{G}$.
\item Let $\{ u_k \}_k$ be a family of elements of $\mathcal{G}$. If both $\{u_k^\ast u_k \}_k$ and $\{u_k u_k^\ast \}_k$ are mutually orthogonal family, then $\sum_k u_k \in \mathcal{G}$ in $\sigma$-strong* topology.
\item For any $u \in \mathcal{G}$ there exists a projection $e \in A$ satisfying $e \leq u^{\ast} u$ and $E_A(u) = e u$. 
\item For any $u \in \mathcal{G}$ and $x \in M$ we have $E_A(u x u^\ast) = u E_A(x) u^\ast$.
\end{enumerate}
\end{df}
An at most countable family $\mathcal{U}$ of elements of $\mathcal{G}$ is called a {\it graphing} of $\mathcal{G}$ if $\mathcal{G}^{\prime \prime} = A \vee \mathcal{U}^{\prime \prime}$. The {\it cost } of a graphing $\mathcal{U}$ is defined to be $C_\tau (\mathcal{U}) = \sum_{u \in \mathcal{U}} \tau (u^{\ast} u)$, and that of $\mathcal{G}$ is defined to be $\inf \{ C_\tau(\mathcal{U})\, | \, \mathcal{U} : \text{a graphing of }\, \mathcal{G} \}$. 

\subsubsection{Equivalence between two approaches}
In the rest of this section, $(M, \tau)$ and $A$ are $(L(G), \tau)$ and $L^\infty(X)$, respectively. Define $\mathcal{G}(G)$ to be the set of elements $u \in M$ of the form $u = a u(E)$ where $a$ is a partial isometry in $A$ and $E$ is a one-sheeted set of $G$. It is easy to see that $\mathcal{G}(G)$ is an $E^M_A$-groupoid and that  $\mathcal{G}(G)^{\prime \prime} = M$. The next lemma, which is missing in \cite{ueda}, guarantees the equivalence between above two approaches. 

\begin{lem}\label{equivalence}
$C_{\tau_G}(\mathcal{G}(G)) = C_\mu(G)$.
\end{lem}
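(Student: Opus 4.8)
The plan is to prove the equality $C_{\tau_G}(\mathcal{G}(G)) = C_\mu(G)$ by establishing both inequalities, passing between graphings of the $E^M_A$-groupoid $\mathcal{G}(G)$ and graphings of $G$ in the measure-theoretic sense. The bridge between the two worlds is the identity $\tau_G(u(E)^* u(E)) = \tau_G(\mathbbm{1}_{s(E)}) = \mu(s(E)) = \mu^G(E)$ for a one-sheeted set $E$, which already equates the ``cost of a single edge'' in both pictures. The main task is therefore to match up admissible families so that these single-edge costs can be summed compatibly.

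For the inequality $C_{\tau_G}(\mathcal{G}(G)) \le C_\mu(G)$, I would start from an arbitrary graphing $\mathcal{E} = \{E_k\}_k$ of $G$ in the measure-theoretic sense and produce an operator-algebraic graphing of the same (or smaller) cost. The natural candidate is $\mathcal{U} := \{u(E_k)\}_k$. First I must check that $\mathcal{U}$ is a graphing of $\mathcal{G}(G)$, i.e.\ that $A \vee \mathcal{U}'' = M = L(G)$. This should follow because the reduced words in $\mathcal{E}$ generate $G$ up to a $\mu^G$-null set, and each product $E_{k_1}^{\epsilon_1}\cdots E_{k_n}^{\epsilon_n}$ of one-sheeted sets corresponds to the product of partial isometries $u(E_{k_1})^{\epsilon_1}\cdots u(E_{k_n})^{\epsilon_n}$, whose characteristic functions recover all of $L(G)$ when combined with $L^\infty(X)$; here the remark in \S2 that $L(G)$ is generated by the $u(E)$, together with the null-set condition defining a graphing, is exactly what is needed. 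Granting this, $C_{\tau_G}(\mathcal{U}) = \sum_k \tau_G(u(E_k)^* u(E_k)) = \sum_k \mu^G(E_k) = C_\mu(\mathcal{E})$, and taking the infimum over $\mathcal{E}$ gives the desired bound.

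For the reverse inequality $C_\mu(G) \le C_{\tau_G}(\mathcal{G}(G))$, I would take an arbitrary operator-algebraic graphing $\mathcal{U} = \{u_k\}_k$ of $\mathcal{G}(G)$ and manufacture a measure-theoretic graphing of $G$ of no greater cost. By definition each $u_k \in \mathcal{G}(G)$ has the form $u_k = a_k\, u(E_k)$ with $a_k$ a partial isometry in $L^\infty(X)$ and $E_k$ one-sheeted; writing $a_k = \mathbbm{1}_{Z_k}$ for a Borel set $Z_k \subseteq r(E_k)$, one sees $u_k = u(E_k \cap r^{-1}(Z_k)) = u(E_k')$ for the one-sheeted set $E_k' := E_k \cap r^{-1}(Z_k)$, and $\tau_G(u_k^* u_k) = \mu(Z_k) = \mu^G(E_k')$. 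The plan is then to set $\mathcal{E} := \{E_k'\}_k$ and verify it is a graphing of $G$. Since $\mathcal{U}$ generates, $A \vee \mathcal{U}'' = L(G)$, and the partial isometries $u(E_k')$ together with $L^\infty(X)$ reconstruct all the $u(E)$, $E \in \mathcal{G}_G$; translating this von Neumann algebraic generation back into the statement that reduced words in $\{E_k'\}$ cover $G$ up to a null set is the crux. Then $C_\mu(\mathcal{E}) = \sum_k \mu^G(E_k') = \sum_k \tau_G(u_k^* u_k) = C_{\tau_G}(\mathcal{U})$, and the infimum yields $C_\mu(G) \le C_{\tau_G}(\mathcal{G}(G))$.

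The step I expect to be the main obstacle is the careful translation, in both directions, between the two notions of ``generating'': the measure-theoretic condition $\mu^G(G \setminus (X \cup \bigcup_{W} W)) = 0$ on the one hand, and the von Neumann algebraic condition $A \vee \mathcal{U}'' = M$ on the other. The forward direction (edges generate words generate $M$) is the more routine one, relying on the \S2 remark that $L(G)$ is generated by the $u(E)$. The delicate direction is showing that von Neumann algebraic generation forces the reduced words to cover $G$ up to null set: one must argue that if some positive-measure piece of $G$ were missed by all reduced words, the corresponding spectral projection would witness a proper von Neumann subalgebra, contradicting $A\vee\mathcal{U}''=M$. Making this rigorous requires relating the $\sigma$-weak closure $\mathcal{U}''$ to the Borel structure of $G$ via the trace $\tau_G$ and the conditional expectation $E^M_A$, and it is here that the $E^M_A$-groupoid axioms (especially the countable-sum axiom (4), which lets one assemble partial isometries supported on disjoint one-sheeted pieces) do the real work.
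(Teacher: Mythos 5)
Your proposal has the same skeleton as the paper's proof: you pass between the two kinds of graphings via $E \mapsto u(E)$ and $u = a\,u(E) \mapsto E' := E \cap r^{-1}(\supp a)$, and you match costs through $\tau_G(u(E)^*u(E)) = \mu(s(E)) = \mu^G(E)$. Your explicit normalization of the partial isometry $a_k$ (cutting $E_k$ down to $E_k' = E_k \cap r^{-1}(Z_k)$ so that the single-edge costs agree exactly) is actually spelled out more carefully than in the paper. The direction you call routine is also the paper's argument in outline: one enumerates $\mathrm{Wr}(\mathcal{E}) \cup \{X\}$ as $\{W_j\}_{j\geq 0}$, disjointifies it into $\{\tilde{W_j}\}_{j \geq 0}$, writes an arbitrary one-sheeted set as $E = \bigsqcup_{j}(E \cap \tilde{W_j})$ up to a null set, and obtains $u(E) = \sum_j \mathbbm{1}_{r(E \cap \tilde{W_j})} u(W_j) \in A \vee \mathcal{U}_{\mathcal{E}}''$ as a $\sigma$-strong sum; this cutting-and-summing step is what your phrase ``combined with $L^\infty(X)$'' has to mean, and it should be stated explicitly, but the ingredients are present in your sketch.

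The genuine gap is at the step you yourself call the crux and then leave as an acknowledged obstacle: proving that $A \vee \mathcal{U}'' = L(G)$ forces the reduced words in $\{E_k'\}$ to cover $G$ up to a $\mu^G$-null set. Saying that a missed positive-measure piece ``would witness a proper von Neumann subalgebra'' is the desired conclusion restated, not an argument; nothing in your proposal produces the contradiction. The paper closes precisely this step with \cite[Lemma 3]{ueda}: if a non-null one-sheeted set $F$ satisfies $F \cap (X \cup \bigcup_{W \in \mathrm{Wr}(\mathcal{E}_{\mathcal{U}})} W) = \emptyset$, one computes $E^M_A(u(F)) = \mathbbm{1}_{F \cap X} = 0$ and $E^M_A(u(W)^*u(F)) = E^M_A(u(W^{-1}F)) = \mathbbm{1}_{s(W \cap F)} = 0$ for every reduced word $W$, and Ueda's lemma --- which decomposes any element of the groupoid generated by $A$ and $\mathcal{U}$ into a $\sigma$-strong$^*$ sum of terms of the form (partial isometry in $A$) times (reduced word in $\mathcal{U}$) --- then forces $u(F) = 0$, contradicting $\tau_G(u(F)^*u(F)) = \mu(s(F)) > 0$. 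If you want to avoid citing Ueda, a self-contained alternative exists: every element $m$ of the $*$-algebra generated by $A \cup \mathcal{U}$ satisfies $m\,\mathbbm{1}_X \in L^2\bigl(X \cup \bigcup_{W} W\bigr)$ (since set-products of one-sheeted sets reduce, e.g.\ $EE^{-1} \subseteq X$, and $u(F)\mathbbm{1}_X = \mathbbm{1}_F$), this property passes to $\sigma$-strong limits because $L^2\bigl(X \cup \bigcup_W W\bigr)$ is a closed subspace, and cyclicity of $\mathbbm{1}_X$ for $L(G)$ then yields $L^2(G) = \overline{L(G)\mathbbm{1}_X} \subseteq L^2\bigl(X \cup \bigcup_W W\bigr)$, i.e.\ $\mu^G\bigl(G \setminus (X \cup \bigcup_W W)\bigr) = 0$. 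Either way, a concrete mechanism of this kind is what your write-up is missing, and without it the inequality $C_\mu(G) \leq C_{\tau_G}(\mathcal{G}(G))$ is not established.
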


\begin{proof}
Let $\mathcal{U}$ be a graphing of $\mathcal{G}(G)$. Then, for each $u \in \mathcal{U}$, there exist a partial isometry $a_u \in A$ and $E_u \in \mathcal{G}_G$ such that $u = a_u u(E_u)$. We show that $\mathcal{E}_{\mathcal{U}} := \{ E_u \}_{u \in \mathcal{U}}$ is a graphing of $G$. Suppose that this is not the case, that is, $\mu^G(G \setminus (X \cup \bigcup_{W \in \mathrm{Wr}(\mathcal{E}_{\mathcal{U}})} W )) > 0$. Then, there exists a  non-null one-sheeted set $F$ of $G$ such that $F \subset G \setminus (X \cup \bigcup_{W \in \mathrm{Wr}(\mathcal{E}_{\mathcal{U}})} W )$. Since $\mu(s(F)) = \mu^G(F) \neq 0$, we have $u(F)^* u(F) = \mathbbm{1}_{s(F)} \neq 0$ and hence $u(F) \neq 0$. On the other hand, since $F \cap (X \cup \bigcup_{W \in \mathrm{Wr}(\mathcal{E}_{\mathcal{U}})} W) = \emptyset$, we have $E^M_A(u(F)) = \mathbbm{1}_{X \cap F} = 0$ and $E^M_A(u(W)^* u(F)) = E^M_A(u(W^{-1} \cdot F)) = \mathbbm{1}_{s(W \cap F)} = 0$ for every $W \in \mathrm{Wr}(\mathcal{E}_{\mathcal{U}})$. Thus, by \cite[Lemma 3]{ueda}, we have $u(F) = 0$, which contradicts $u(F) \neq 0$. Hence $\mathcal{E}_{\mathcal{U}}$ is a graphing of $G$. Then, one computes $C_{\tau_G}(\mathcal{U}) = \sum_{u \in \mathcal{U}} \tau_G( u(E_u)^* a_u^* a_u u(E_u)) = \sum_{u \in \mathcal{U}} \tau_G( u(E_u^{-1} E_u)) = \sum_{u \in \mathcal{U}} \mu(s(E_u)) = C_\mu(\mathcal{E}_{\mathcal{U}}) \geq C_\mu(G)$. Since this inequality holds for every graphing $\mathcal{U}$ of $\mathcal{G}(G)$, we obtain $C_{\tau_G}(\mathcal{G}(G)) \geq C_\mu(G)$. 

Let $\mathcal{E}$ be a graphing of $G$. We show that $\mathcal{U}_{\mathcal{E}} := \{ u(E) \, | \, E \in \mathcal{E} \}$ is a graphing of $\mathcal{G}(G)$. Let $ \{ W_j \}_{j \geq 0}$ be an enumeration of $\mathrm{Wr}(\mathcal{E}) \cup \{ X \}$ with $W_0 = X$. Define a family $\{ \tilde{W_j} \}_{j \geq 0}$ inductively by $\tilde{W_0} = W_0$ and $\tilde{W_n} = W_n \setminus (\bigcup_{j=0}^{n-1} \tilde{W_j})$. Then $G = \bigsqcup_{j \geq 0} \tilde{W_j}$ up to null set. Take $E \in \mathcal{G}_G$. Since $E = \bigsqcup_{j \geq 0} (E \cap \tilde{W_j})$ up to null set,  we have $u(E) = \sum_{j \geq 0} u(E \cap \tilde{W_j})$ in the $\sigma$-strong operator topology. Since $E \cap \tilde{W_j} \subset W_j$, we have $u(E \cap \tilde{W_j}) = \mathbbm{1}_{r(E \cap \tilde{W_j})} u (W_j) \in A \vee \mathcal{U}_{\mathcal{E}}^{\prime \prime} $ for every $j \geq 0$. Thus $u(E) \in A \vee \mathcal{U}_{\mathcal{E}}^{\prime \prime}$ for every $E \in \mathcal{G}_G$. Hence, we conclude that $M = A \vee \mathcal{U}_{\mathcal{E}}^{\prime \prime}$, that is, $\mathcal{U}_{\mathcal{E}}$ is a graphing of $\mathcal{G}(G)$. Then, one compute $C_\mu(\mathcal{E}) = \sum_{E \in \mathcal{E}} \mu(s(E)) = \sum_{E \in \mathcal{E}} \tau_G( u(E)^* u(E)) = C_{\tau_G}(\mathcal{U}_{\mathcal{E}}) \geq C_{\tau_G} ( \mathcal{G}(G))$. Since the inequality holds for every graphing  $\mathcal{E}$ of $G$, we obtain $C_\mu(G) \geq C_{\tau_G}(\mathcal{G}(G))$.  
\end{proof}

\subsection{Some properties of groupoid cost}
We prove that three important results of Gaboriau \cite{gaboriau:cost} hold true even for arbitrary pmp discrete groupoids. The first two (Proposition \ref{induction} and Theorem \ref{additivity}) are proved by translating the corresponding results in \cite{ueda} into pmp discrete groupoid setting, though one can prove them in the framework of groupoids directly by translating the proofs in \cite{ueda} into the framework. The last one (Theorem \ref{treeing_attains_cost}), which is a central result in the theory of costs, is proved directly because it is missing in \cite{ueda}.

\subsubsection{ Induction formula.} For any Borel subsets $Y_1$, $Y_2 \subset X$, the symbol $G^{Y_1}_{Y_2}$ denotes the set $s^{-1}(Y_1) \cap r^{-1}(Y_2)$. The {\it  restriction} $G \upharpoonright_{Y}$ of $G$ to a Borel subset $Y \subset X$ is defined to be $G^Y_Y$.

An at most countable family $\mathcal{E} \subset \mathcal{G}_G$ is called a {\it treeing} of $G$ if $\mu^G(W \cap X) = 0$ for every reduced word $W$ in $\mathcal{E}$. For an $E^M_A$-groupoid $\mathcal{G}$, an at most countable family $\mathcal{U} \subset \mathcal{G}$ is called a {\it treeing} if $E^M_A (w) = 0$ for every reduced word $w$ in $\mathcal{U}$. A pmp discrete groupoid $G$ (resp. an $E^M_A$-groupoid $\mathcal{G}$) is said to be {\it treeable} if it has a treeing which is also a graphing.  Note that $E^M_A(u(E)) = 0$ if and only if $\mu^G(E \cap X) = 0$. Indeed, it is easy to see that $E^M_A(u(E)) = u(E \cap X)$. 

We prove the following proposition: 

\begin{prop}\label{induction}(groupoid version of \cite[Proposition II. 6]{gaboriau:cost})
Let $Y \subset X$ be a Borel subset satisfying $\# G^x_Y \geq 1$ for $\mu$-a.e.~$x \in X$. Then, we have the following: 
\begin{enumerate}
\item $C_\mu(G) - 1 = C_\mu(G \upharpoonright_Y) - \mu(Y)$;
\item $G$ is treeable if and only if so is $G \upharpoonright_Y$. 
\end{enumerate}
\end{prop}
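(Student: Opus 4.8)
The plan is to transfer Gaboriau's induction argument for equivalence relations to the groupoid setting by working at the level of graphings and their associated words. The statement is the groupoid analogue of \cite[Proposition II.6]{gaboriau:cost}, and the author himself notes that Ueda's operator-algebraic version \cite{ueda} already settles the corresponding fact for $E^M_A$-groupoids. So the cleanest route is probably to invoke Lemma \ref{equivalence}, which identifies $C_\mu(G)$ with $C_{\tau_G}(\mathcal{G}(G))$, and then deduce (1) and (2) from the operator-algebraic induction formula for the normalizing groupoid. However, since the author states he wants to give a direct groupoid-theoretic treatment, I will instead describe how to argue purely combinatorially with graphings.

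For the inequality $C_\mu(G\upharpoonright_Y)-\mu(Y)\le C_\mu(G)-1$, I would start from an arbitrary graphing $\mathcal{E}$ of $G$ and manufacture from it a graphing of $G\upharpoonright_Y$ of controlled cost. The hypothesis $\#G^x_Y\ge 1$ $\mu$-a.e.\ means that from $\mu$-a.e.\ point $x\in X$ there is a one-sheeted ``path'' (a word in $\mathcal{E}$) landing in $Y$; by measurably selecting such a word of minimal length and partitioning $X$ according to which word is chosen, one builds a countable family of one-sheeted sets carrying $X$ into $Y$. Conjugating the generators $E\in\mathcal{E}$ by these transport maps and intersecting with $G^Y_Y$ produces one-sheeted sets in $G\upharpoonright_Y$; the usual telescoping/bookkeeping (the ``cost is preserved minus the measure gained by collapsing a spanning object'') yields the estimate. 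The reverse inequality $C_\mu(G)-1\le C_\mu(G\upharpoonright_Y)-\mu(Y)$ goes the other way: given a graphing of $G\upharpoonright_Y$, one adjoins the transport one-sheeted sets (whose total cost accounts for the $+1-\mu(Y)$ discrepancy) to obtain a graphing of all of $G$. Combining the two inequalities gives the equality in (1).

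For part (2) the point is that the transport one-sheeted sets constructed above can be chosen to form a \emph{treeing}: since each transports a piece of $X$ into $Y$ along a single reduced word, no nontrivial reduced word in the augmented family returns to the unit space off a null set. Thus if $\mathcal{E}$ is a treeing-graphing of $G\upharpoonright_Y$, augmenting it by the transport sets yields a treeing-graphing of $G$, and conversely restricting a treeing-graphing of $G$ to $G\upharpoonright_Y$ (after the same conjugation) preserves the tree condition because the word structure is inherited. The key is that the ``no reduced word hits $X$'' condition $\mu^G(W\cap X)=0$ is stable under these operations, which follows from the one-sheetedness of all the sets involved and the measure-preservation of the maps $\varphi_E$.

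The main obstacle I anticipate is the measurable selection and partition underlying the transport maps: one must choose, $\mu$-a.e., a reduced word in $\mathcal{E}$ of minimal length steering $x$ into $Y$, do so Borel-measurably, and verify that the resulting pieces are genuinely one-sheeted elements of $\mathcal{G}_G$ with the right source and range sets. This is exactly where the countable-to-one structure and the decomposition results (e.g.\ \cite[Theorem 18.10]{kechris:GTM156}) from \S2 must be deployed carefully, and where the bookkeeping for the exact cost balance $C_\mu(G)-1=C_\mu(G\upharpoonright_Y)-\mu(Y)$ has to be made rigorous rather than heuristic. Given that Ueda \cite{ueda} has already proved the operator-algebraic version, an alternative—and likely safer—strategy is to bypass these selection issues entirely by translating via Lemma \ref{equivalence}, reducing both (1) and (2) to his results on $E^M_A$-groupoids.
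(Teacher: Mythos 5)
Your ``safer'' fallback route is exactly what the paper does --- and, contrary to your reading of the author's intent, this is what he announces: the text states that Proposition \ref{induction} and Theorem \ref{additivity} ``are proved by translating the corresponding results in \cite{ueda} into pmp discrete groupoid setting.'' But that reduction needs more than Lemma \ref{equivalence}, and the missing pieces are precisely what the paper supplies. First, Ueda's induction formula \cite[Proposition 15]{ueda} is applied under the hypothesis that the central support $z_{L(G)}(\mathbbm{1}_Y)$ equals $1$; the paper proves a separate lemma (Lemma \ref{induction_lem}) showing that this is equivalent to the hypothesis $\# G^x_Y \geq 1$ for a.e.~$x$. Second, one must identify the corner data, namely $\mathbbm{1}_Y \mathcal{G}(G) \mathbbm{1}_Y = \mathcal{G}(G\upharpoonright_Y)$ and $\mathbbm{1}_Y L(G) \mathbbm{1}_Y = L(G\upharpoonright_Y)$, so that Lemma \ref{equivalence} can be invoked both for $G$ and for $G\upharpoonright_Y$. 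Third, and most substantially for part (2): Lemma \ref{equivalence} transports \emph{costs} only, not treeability, so one still has to prove that $G$ is treeable if and only if the $E^M_A$-groupoid $\mathcal{G}(G)$ is treeable. The nontrivial direction (a treeing $\mathcal{U}$ of $\mathcal{G}(G)$ yields a treeing of $G$) is handled in the paper by a freeness argument: the algebras $A \vee \{u\}^{\prime\prime}$, $u \in \mathcal{U}$, form a free family with respect to $E^M_A$, and $u(E_u) \in A \vee \{u\}^{\prime\prime}$, whence the associated one-sheeted sets $E_u$ form a treeing of $G$. None of these three steps appears in your proposal, so even the route you recommend is a reduction statement rather than a proof.

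Your primary (direct) route is viable in principle, but the gaps are not only the measurable-selection issues you flag; they sit at the decisive cost computation. Left or right multiplication by a one-sheeted set preserves $\mu^G$, so conjugating each generator of a graphing $\mathcal{E}$ of $G$ by \emph{arbitrary} transport sets produces a graphing of $G\upharpoonright_Y$ of cost $C_\mu(\mathcal{E})$, with no saving at all. The saving of $1-\mu(Y)$ appears only when the transports are assembled from the graphing itself: one takes the ``forest'' of first edges of minimal-length $\mathcal{E}$-words steering points of $X\setminus Y$ into $Y$, whose pieces have total cost $\mu(X\setminus Y)=1-\mu(Y)$ and conjugate to units, hence may be discarded, while the remaining (non-forest) pieces conjugate with cost preserved. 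Your phrase ``cost is preserved minus the measure gained by collapsing a spanning object'' is the statement to be proved, not an argument for it. Likewise, in part (2) the claim that ``the word structure is inherited'' conceals the real work: a reduced word in the conjugated generators unfolds into a generally non-reduced word in $\mathcal{E}$, and one must check that all cancellations occur inside the forest segments (using that forest and non-forest pieces are disjoint subsets of each $E \in \mathcal{E}$) and that isotropy --- the phenomenon distinguishing groupoids from equivalence relations, and the reason \cite[Proposition I.11]{gaboriau:cost} fails here --- creates no new units. As written, the direct route is a program whose hard steps are exactly the ones left heuristic; the paper avoids all of them by the operator-algebraic reduction.
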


The next lemma seems standard, but we do give its proof for the sake of completeness.

\begin{lem}\label{induction_lem}
For a Bore subset $Y \subset X$, the inequality $\# G^x_Y \geq 1$ holds for $\mu$-a.e.~$x \in X$ if and only if the central support projection $z_M( \mathbbm{1}_Y) = 1$.
\end{lem}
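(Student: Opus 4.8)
The plan is to characterize the condition $\#G^x_Y \geq 1$ for $\mu$-a.e.~$x$ in terms of the operator $u(E)$ and then connect it to the central support of $\mathbbm{1}_Y = u(Y)$ in $M = L(G)$. First I would recall that the central support projection $z_M(\mathbbm{1}_Y)$ is the smallest central projection dominating $\mathbbm{1}_Y$, and that by standard von Neumann algebra theory it can be computed as the supremum $\bigvee_{m \in M} m\,\mathbbm{1}_Y\, m^*$ of ranges of $\mathbbm{1}_Y$ translated by elements of $M$; since $M$ is generated by the partial isometries $u(E)$, $E \in \mathcal{G}_G$, it suffices to take the supremum over these generators, i.e.~$z_M(\mathbbm{1}_Y) = \bigvee_{E \in \mathcal{G}_G} u(E)\,\mathbbm{1}_Y\, u(E)^*$.

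Next I would translate each $u(E)\,\mathbbm{1}_Y\, u(E)^*$ back into measure-theoretic language. For a one-sheeted set $E$, conjugating $\mathbbm{1}_Y$ by $u(E)$ yields $u(E)\,\mathbbm{1}_Y\, u(E)^* = \mathbbm{1}_{r(E \cap s^{-1}(Y))} = \mathbbm{1}_{\varphi_E(s(E)\cap Y)}$, which is the characteristic function of the set of points that can be reached from $Y$ along $E$. Taking the supremum over all $E \in \mathcal{G}_G$ therefore produces the characteristic function of the set $\{x \in X \mid \#G^x_Y \geq 1\}$ of points reachable from $Y$ by some element of the groupoid, up to null sets. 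This is the geometric heart of the equivalence: the central support, computed algebraically, equals the indicator of the $G$-saturation of $Y$.

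With this identification in hand, the two conditions become tautologically equivalent. The saturation of $Y$ has full measure precisely when $\#G^x_Y \geq 1$ for $\mu$-a.e.~$x$, and the central support $z_M(\mathbbm{1}_Y)$ equals $\mathbbm{1}$ precisely when its corresponding indicator has full measure, since $\tau_G(z_M(\mathbbm{1}_Y)) = \mu$ of the saturation. I would phrase the final step by noting that $z_M(\mathbbm{1}_Y) = 1$ iff $\tau_G(1 - z_M(\mathbbm{1}_Y)) = 0$ iff the complement of the saturation is $\mu$-null.

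The main obstacle I expect is making the passage from the algebraic supremum over all of $M$ to the supremum over the generating partial isometries $u(E)$ fully rigorous, and in particular verifying that the supremum $\bigvee_E u(E)\,\mathbbm{1}_Y\, u(E)^*$ really coincides with the central support rather than merely a projection dominated by it. The key point to justify is that the resulting projection is central: this follows because conjugating it by any generator $u(F)$ merely permutes the family $\{u(FE)\,\mathbbm{1}_Y\,u(FE)^*\}$ (using that products of one-sheeted sets decompose into one-sheeted sets, as in \S2) and hence leaves the supremum invariant, so the projection commutes with all $u(F)$ and therefore lies in the center. Once centrality and domination of $\mathbbm{1}_Y$ are established, minimality is automatic since any central projection containing $\mathbbm{1}_Y$ must contain each conjugate $u(E)\,\mathbbm{1}_Y\,u(E)^*$.
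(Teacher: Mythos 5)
Your proposal follows, in substance, the same route as the paper: both rest on the two facts that each conjugate $u(E)\,\mathbbm{1}_Y\,u(E)^* = \mathbbm{1}_{\varphi_E(s(E)\cap Y)}$ lies below $z_M(\mathbbm{1}_Y)$, and that the indicator of the saturation $G\cdot Y := r(s^{-1}(Y))$ is a central projection dominating $\mathbbm{1}_Y$; the paper runs these as two separate implications, while you package them into the single identity $z_M(\mathbbm{1}_Y) = \mathbbm{1}_{G\cdot Y}$. However, the argument you give for the step you yourself single out as the crux --- centrality of $q := \bigvee_{E\in\mathcal{G}_G} u(E)\,\mathbbm{1}_Y\,u(E)^*$ --- fails as stated. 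Conjugation by the partial isometry $u(F)$ does \emph{not} permute the family: one has $u(F)u(E) = u(FE)$, and every element of $FE$ has range in $r(F)$, so $E \mapsto FE$ maps $\mathcal{G}_G$ into itself but not onto it, and conjugation carries the family into a proper subfamily. In particular the supremum is not left invariant: $u(F)\,q\,u(F)^* \leq u(F)u(F)^* = \mathbbm{1}_{r(F)}$, so the asserted invariance $u(F)\,q\,u(F)^* = q$ is impossible whenever $q \not\leq \mathbbm{1}_{r(F)}$ (which is the typical situation; under the hypothesis of the lemma $q=1$). All your argument yields is the sub-invariance $u(F)\,q\,u(F)^* \leq q$, and sub-invariance under a single partial isometry does not imply commutation (take $M = M_2(\mathbb{C})$, $v = e_{12}$, $q = e_{11}$: then $vqv^* = 0 \leq q$ but $vq \neq qv$). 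A smaller imprecision of the same kind: for general $m \in M$ the element $m\,\mathbbm{1}_Y\,m^*$ is not a projection, so the description of $z_M(\mathbbm{1}_Y)$ as a supremum should use unitaries or range projections.

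The gap is closable in two short ways. (i) Use that $\mathcal{G}_G$ is inverse-closed: from $u(F)\,q\,u(F)^* \leq q$ one gets $(1-q)\,u(F)\,q\,u(F)^*\,(1-q) = 0$, hence $(1-q)\,u(F)\,q = 0$, i.e.\ $u(F)q = qu(F)q$; applying the same to $F^{-1}$ and taking adjoints gives $qu(F) = qu(F)q$, so $u(F)q = qu(F)$ for every $F \in \mathcal{G}_G$, and since the $u(F)$ generate $M$, the projection $q$ is central. (ii) Follow the paper: you have already identified $q = \mathbbm{1}_{G\cdot Y}$ via a countable decomposition of $G$ into one-sheeted sets, and $G\cdot Y$ is a saturated set, so $\varphi_F(s(F)\cap G\cdot Y) = r(F)\cap G\cdot Y$ for every $F\in\mathcal{G}_G$, which gives $u(F)\,\mathbbm{1}_{G\cdot Y} = \mathbbm{1}_{G\cdot Y}\,u(F)$ directly; this invariance-implies-centrality observation is exactly how the paper handles the set $\tilde{Y} = \{x \in X \,|\, \#G^x_Y \geq 1\}$. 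With either repair, the rest of your proof (each conjugate lies below any central projection dominating $\mathbbm{1}_Y$, and the faithful-trace argument at the end) is correct.
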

\begin{proof}	
Suppose that $z_M(\mathbbm{1}_Y) = 1$. The set $\tilde{Y} := \{ x \in X \, | \, \# G^x_Y \geq 1 \}$ is a $G$-invariant Borel subset that contains $Y$. Thus, we have $\mathbbm{1}_Y \leq \mathbbm{1}_{\tilde{Y}}$, which is a central projection in $M$. Hence $1 = z_M(\mathbbm{1}_Y) \leq \mathbbm{1}_{\tilde{Y}}$, that is, $\mathbbm{1}_{\tilde{Y}} = 1$. This implies that $\# G^x_Y \geq 1$ holds for $\mu$-a.e.~$x \in X$. 

Conversely, suppose that $\# G^x_Y \geq 1$ holds for $\mu$-a.e.~$x \in X$. Then, $G \cdot Y := r(s^{-1}(Y))$ is a conull subset, thus $\mathbbm{1}_{G \cdot Y} = 1$. Let $G = \bigsqcup_{i \geq 1} E_i$ be a decomposition into one-sheeted sets; see \S 2. Then, we have $G \cdot Y = \bigcup_{i \geq 1} \varphi_{E_i} (Y)$. Thus, we have $1 = \mathbbm{1}_{G \cdot Y} = \bigvee_{i \geq 1} \mathbbm{1}_{\varphi_{E_i}(Y)} = \bigvee_{i \geq 1} u(E_i) \mathbbm{1}_Y u(E_i)^*$. On the other hand, by an explicit description of the central support, we have $u(E_i) \mathbbm{1}_Y u(E_i)^* \leq z_M(\mathbbm{1}_Y)$ for every $i \geq 1$. Therefore we have $z_M(\mathbbm{1}_Y) = 1$.
\end{proof}

\begin{proof} (Proposition \ref{induction})
(1) We have $z_M(\mathbbm{1}_Y) = 1$ by Lemma \ref{induction_lem}. Applying \cite[Proposition 15]{ueda}, we get $C_\tau(\mathcal{G}(G)) - 1 = C_{\tau \upharpoonright_{\mathbbm{1}_Y M \mathbbm{1}_Y}} ( \mathbbm{1}_Y \mathcal{G}(G) \mathbbm{1}_Y) - \tau (\mathbbm{1}_Y)$. It is not hard to see that  $\mathbbm{1}_Y \mathcal{G}(G) \mathbbm{1}_Y = \mathcal{G}(G \upharpoonright_Y)$ and that $\mathbbm{1}_Y M \mathbbm{1}_Y = L(G \upharpoonright_Y)$. Thus, applying Lemma \ref{equivalence}, we get an equality $C_\mu(G) - 1 = C_\mu(G \upharpoonright_Y) - \mu(Y)$.

(2) Thanks to \cite[Proposition 15]{ueda}, it suffices to show that $G$ is treeable if and only if so is $\mathcal{G}(G)$. The only if part is easy. Let $\mathcal{U}$ be a treeing of $\mathcal{G}(G)$ and $\mathcal{E}_{\mathcal{U}}$ be its associated graphing of $G$ (see the proof of \ref{equivalence}). Then, the family $\{ A \vee \{ u \}^{\prime \prime} \}_{u \in \mathcal{U}}$ is a free family of von Neumann algebra with respect to $E^M_A$; see \cite[\S 3.8]{voiculescu-dykema-nica} for the definition of freeness. Since $u(E_u) \in A \vee \{ u \}^{\prime \prime}$ for every $u \in \mathcal{U}$, the freeness of $\{ A \vee \{ u \}^{\prime \prime} \}_{u \in \mathcal{U}}$ implies that $\mathcal{E}_{\mathcal{U}}$ is a treeing of $G$. Hence we are done.
\end{proof}

\subsubsection{ Additivity formula.} Let $G_1 \supset G_3 \subset G_2$ be subgroupoids of a pmp discrete groupoid $G$ with $G_3 = G_1 \cap G_2$. We say that $G$ is the free product of $G_1$ and $G_2$ with amalgamation $G_3$ and write $G = G_1 \bigstar_{G_3} G_2$ if the following conditions are satisfied: $G$ is generated by $G_1$ and $G_2$; for any alternating word $E_1 \cdots E_n$ in $\mathcal{G}(G_1)$ and $\mathcal{G}(G_2)$ satisfying $\mu^G( E_i \cap G_3) = 0$ for every $i \geq 1$, we have $\mu^G( (E_1 \cdots E_n) \cap G_3) = 0$. A rigorous (i.e., measurable) construction of free products with amalgamations was given in \cite{kosaki}, but we do not need it here. 

\begin{thm}\label{additivity}(groupoid version of \cite[Th\'eor\`eme IV. 15]{gaboriau:cost})
Let $G_1 \supset G_3 \subset G_2$ be subgroupoids of a discrete pmp groupoid $G$ with $G_3 = G_1 \cap G_2$. Assume that $G = G_1 \bigstar_{G_3} G_2$ and that $G_3$ is principal and  hyper finite. Assume further that both $C_\mu(G_1)$ and $C_\mu(G_2)$ are finite.Then, $C_\mu(G) = C_\mu(G_1) + C_\mu(G_2) - C_\mu(G_3)$ holds.  
\end{thm}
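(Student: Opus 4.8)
The plan is to mimic the proof of Proposition \ref{induction}: transport everything to the operator-algebraic side through Lemma \ref{equivalence}, apply the corresponding additivity theorem of Ueda \cite{ueda} for $E^M_A$-groupoids, and then transport the resulting identity back. Writing $M = L(G)$, $A = L^\infty(X)$, $M_i = L(G_i)$ and $M_3 = L(G_3)$, the decisive structural fact to establish is that the groupoid free product with amalgamation $G = G_1 \bigstar_{G_3} G_2$ corresponds exactly to the statement that $M_1$ and $M_2$ are free over $M_3$ with respect to the $\tau_G$-preserving conditional expectation $E^M_{M_3}$, equivalently that the $E^M_A$-groupoid $\mathcal{G}(G)$ is the amalgamated free product $\mathcal{G}(G_1)\bigstar_{\mathcal{G}(G_3)}\mathcal{G}(G_2)$ in Ueda's sense.

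First I would record the basic dictionary. For a one-sheeted set $E$ and the tower $A \subset M_3 \subset M$ one has $E^M_{M_3}(u(E)) = u(E \cap G_3)$, the evident generalisation of the identity $E^M_A(u(E)) = u(E \cap X)$ used earlier (this is the case $G_3 = X$). Hence $\mu^G(E \cap G_3) = 0$ is equivalent to $E^M_{M_3}(u(E)) = 0$, i.e.~to $u(E)$ being centred over $M_3$. Since each element of $\mathcal{G}(G_i)$ has the form $a\,u(E)$ with $a$ a partial isometry in $A$ and $E \in \mathcal{G}_{G_i}$, the centredness of alternating words translates verbatim: the defining null-set condition on alternating words $E_1 \cdots E_n$ in the groupoid free product is precisely the vanishing $E^M_{M_3}(u(E_1)\cdots u(E_n)) = 0$ on alternating products of $M_3$-centred generators, which is the definition of freeness of $\{M_1, M_2\}$ with amalgamation over $M_3$ (see \cite{voiculescu-dykema-nica}). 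I would also verify the routine identifications $M_i = \mathcal{G}(G_i)''$ and $\mathbbm{1}_X \in A \subset M_3$, so that the three $E^M_A$-groupoids sit coherently inside $\mathcal{G}(G)$.

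Next I would check that the two standing hypotheses on $G_3$ supply exactly what Ueda's theorem demands of the amalgam. Principality of $G_3$ means $G_3$ is an equivalence relation, whence $A = L^\infty(X)$ is a Cartan subalgebra of $M_3$; combined with hyperfiniteness this places the inclusion $A \subset M_3$ in the class for which the operator-algebraic additivity formula of \cite{ueda} is valid, while the finiteness of $C_\mu(G_1)$ and $C_\mu(G_2)$ (hence, by Lemma \ref{equivalence}, of $C_{\tau_G}(\mathcal{G}(G_i))$) matches the finiteness assumption there. Applying that theorem yields $C_{\tau_G}(\mathcal{G}(G)) = C_{\tau_G}(\mathcal{G}(G_1)) + C_{\tau_G}(\mathcal{G}(G_2)) - C_{\tau_G}(\mathcal{G}(G_3))$, and a final appeal to Lemma \ref{equivalence} converts each term into the corresponding $C_\mu(G_\bullet)$, giving the desired formula.

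The main obstacle is the structural translation of the second paragraph: one must show carefully that the measure-theoretic freeness condition, phrased through $\mu^G$-null sets of alternating words, is genuinely equivalent to amalgamated operator-algebraic freeness over $M_3$, rather than merely over $A$. The delicate point is the passage from the amalgam $A$ implicit in the definition of $E^M_A$-groupoids to the amalgam $M_3$ governing the free product; here I would exploit the factorisation $E^M_A = E^{M_3}_A \circ E^M_{M_3}$ of conditional expectations, valid because $G_3$ is principal so that $A \subset M_3$ is Cartan, to reconcile the two notions of centredness and thereby reduce the hypotheses on $\mathcal{G}(G_3)$ (formulated relative to $A$) to the freeness statement relative to $M_3$.
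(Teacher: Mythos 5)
Your overall route coincides with the paper's: transport everything to the operator-algebra side via Lemma \ref{equivalence}, verify the hypotheses of Ueda's amalgamated additivity theorem \cite[Theorem 9]{ueda}, apply it, and transport back. Your dictionary $E^M_{M_3}(u(E)) = u(E \cap G_3)$, the resulting translation of the null-set condition on alternating words into $*$-freeness of $L(G_1)$ and $L(G_2)$ over $M_3 = L(G_3)$, and your treatment of the hypotheses on $G_3$ (principality gives $A$ as a MASA in $M_3$ by Feldman--Moore, plus hyperfiniteness) are exactly the paper's steps. However, there is a genuine gap in the structural translation. You claim that freeness of $M_1, M_2$ over $M_3$ with respect to $E^M_{M_3}$ is \emph{equivalent} to $\mathcal{G}(G)$ being the amalgamated free product of $\mathcal{G}(G_1)$ and $\mathcal{G}(G_2)$ in Ueda's sense, and you then apply Ueda's theorem directly to $\mathcal{G}(G)$. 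That equivalence fails in one direction: von Neumann algebraic freeness, even together with $M = M_1 \vee M_2$ (which itself needs an argument you do not give), does not show that every element of $\mathcal{G}(G)$ --- a partial isometry $a\,u(E)$ with $E$ an arbitrary one-sheeted subset of $G$ --- belongs to the smallest $E^M_A$-groupoid $\mathcal{G}(G_1) \vee \mathcal{G}(G_2)$. Groupoid generation requires exhibiting such $u$ as a $\sigma$-strong$^*$ sum of words with mutually orthogonal supports and ranges, not as a weak limit of polynomials. Without this identification, Ueda's theorem computes the cost of $\mathcal{G}(G_1) \vee \mathcal{G}(G_2)$ rather than of $\mathcal{G}(G)$, and Lemma \ref{equivalence} cannot be invoked to return to $C_\mu(G)$. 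The paper devotes a separate step to precisely this point: starting from graphings $\mathcal{E}_i$ of $G_i$, it shows that $u(\mathcal{E}_1) \cup u(\mathcal{E}_2)$ is a graphing of $\mathcal{G}(G)$ and then uses \cite[Lemma 3]{ueda} to write any $u \in \mathcal{G}(G)$ as a $\sigma$-strong$^*$ sum $\sum_w u_w$, where each $u_w$ is a partial isometry in $A$ times a reduced word in that graphing and the supports (resp.\ ranges) are mutually orthogonal; property (4) in the definition of $E^M_A$-groupoids then places $u$ in $\mathcal{G}(G_1) \vee \mathcal{G}(G_2)$. Some argument of this kind must be added to your plan.

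A secondary remark: the issue you single out as the ``main obstacle'' --- reconciling centredness over $A$ with centredness over $M_3$ via $E^M_A = E^{M_3}_A \circ E^M_{M_3}$ --- does not actually arise. Both in your own second paragraph and in the paper, the amalgamated freeness is checked directly against $E^M_{M_3}$ using $E^M_{M_3}(u(E)) = u(E \cap G_3)$, so no factorisation of expectations is needed. The genuinely delicate extra verification is the generation statement described above, which is the one step your proposal leaves out.
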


\begin{proof}
We use the following notation: $\mathcal{G}_i = \mathcal{G}(G_i)$, $N_i = \mathcal{G}_i^{\prime \prime} = L(G_i)$. In order to apply \cite[Theorem 9]{ueda} to our situation, we show the following assertions: 
\begin{enumerate}
\item $(M, E^M_{N_3}) = (N_1, E^M_{N_3} \upharpoonright_{N1}) \bigstar_{N_3} (N_2, E^M_{N_3} \upharpoonright_{N2})$; 
\item $N_3$ is a hyperfinite von Neumann algebra that contains $A$ as a MASA;
\item the smallest $E^M_A$-groupoid $\mathcal{G}_1 \vee \mathcal{G}_2$ which contains $\mathcal{G}_1$ and $\mathcal{G}_2$ equals $\mathcal{G}(G)$.
\end{enumerate}
 
(1) First, we show that $M$ is generated by $N_1$ and $N_2$. Let $\mathcal{E}_i$ be a graphing of each $G_i$. Since $G$ is generated by $G_1$ and $G_2$, we have $\mu^G( G \setminus (X \cup \bigcup_{W \in \mathrm{Wr}( \mathcal{E}_1 \cup \mathcal{E}_2) } W)) = 0$. Then, by an argument similar to that in the proof of Lemma \ref{equivalence}, we conclude that $u(\mathcal{G}_G) \subset N_1 \vee N_2$. Thus $M = N_1 \vee N_2$. 

Next, we show that $u(\mathcal{G}_{G_1})$ and $u(\mathcal{G}_{G_2})$ are $*$-free with amalgamation $N_3$ with respect to $E^M_{N_3}$. It is not hard to see that $E^M_{N_3}(u(E)) = u(E \cap G_3)$ for every $E \in \mathcal{G}_G$. Thus, $\mu^G(E \cap G_3) = 0$ if and only if $E^M_{N_3}(u(E)) = 0$; this fact enables us to show the assertion. 

(2) Since $G_3$ is principal, $G_3$ is nothing but a pmp discrete equivalence relation. Hence, $N_3$ is a hyperfinite von Neumann algebra that contains $A$ as a MASA; see \cite[Proposition 2.9]{feldman-moore}. 

(3) Let $\mathcal{E}_i$ be a graphing of each $G_i$. Then, by the proof of Lemma \ref{equivalence}, $\mathcal{U}_i := u(\mathcal{E}_i)$ is a graphing of $\mathcal{G}_i$. Also, we have proved that $M = N_1 \vee N_2$. Thus, $\mathcal{U} := \mathcal{U}_1 \cup \mathcal{U}_2$ is a graphing of $\mathcal{G}(G)$. Therefore, for every $u \in \mathcal{G}(G)$, by \cite[Lemma 3]{ueda}, there exists a family $\{ u_w \}_{w \in \mathrm{Wr}(\mathcal{U})} \subset \mathcal{G}(G)$ satisfying the following: (i) every $u_w$ is a product of a partial isometry in $A$ and a reduced word in $\mathcal{U}$; (ii) the support projections and range projections respectively form mutually orthogonal families; (iii) $u = \sum_{w \in \mathrm{Wr}(\mathcal{U})} u_w$ in the $\sigma$-strong$^*$ topology. Since each $u_w$ belongs to $\mathcal{G}_1 \vee \mathcal{G}_2$, the above condition (ii) implies that $u \in \mathcal{G}_1 \vee \mathcal{G}_2$. 

Hence we can apply \cite[Theorem 9]{ueda} to our $E^M_A$-groupoids $\mathcal{G}_1 \supset \mathcal{G}_3 \subset \mathcal{G}_2$. Then, by Lemma \ref{equivalence}, we conclude that $C_\mu(G) = C_\mu(G_1) + C_\mu(G_2) - C_\mu(G_3)$ holds if both $C_\mu(G_2)$ and $C_\mu(G_3)$ are finite.
\end{proof}

\subsubsection{Any treeing attains the cost} 

\begin{thm} \label{treeing_attains_cost} (groupoid version of \cite[Th\'eor\`eme IV. 1]{gaboriau:cost}) 
If $G$ is generated by a treeing $\mathcal{E}$, then we have $C_\mu(G) = C_\mu(\mathcal{E})$.
\end{thm}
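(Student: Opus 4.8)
The plan is to realise every graphing of $G$ as the edge set of a one-dimensional simplicial $G$-complex and to read off its cost from the $L^2$-Betti numbers through Corollary \ref{n-connected}. Since the treeing $\mathcal{E}$ is in particular a graphing, the inequality $C_\mu(G)\le C_\mu(\mathcal{E})$ is immediate, so the whole statement reduces to proving $C_\mu(\mathcal{E})\le C_\mu(\Phi)$ for an arbitrary graphing $\Phi$ of $G$; taking the infimum over $\Phi$ then gives $C_\mu(G)=C_\mu(\mathcal{E})$. I may assume $C_\mu(\Phi)<\infty$, since otherwise there is nothing to prove, and, refining if necessary, that $\Phi\subset\mathcal{G}_G$ consists of one-sheeted sets disjoint from $X$.

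To a graphing $\Psi\subset\mathcal{G}_G$ I would associate the $1$-dimensional simplicial $G$-complex $\Sigma_\Psi$ whose vertex $G$-space is $\Sigma_\Psi^{(0)}=G$ (equipped with the range map, so that $X$ is a fundamental domain) and whose edge space $\Sigma_\Psi^{(1)}\subset\Sigma_\Psi^{(0)}*\Sigma_\Psi^{(0)}$ consists of the pairs $(g,g')$ with $r(g)=r(g')$ and $g^{-1}g'\in\bigcup_{E\in\Psi}(E\cup E^{-1})$, together with their transpositions. Over $\mu$-a.e.\ $x$ the fibre $(\Sigma_\Psi)_x$ is exactly the Cayley graph of the $G$-orbit with edges prescribed by $\Psi$; since $\Psi$ generates $G$ up to null sets this graph is connected, so $\Sigma_\Psi$ is $0$-connected. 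Computing with fundamental domains as in \S\S 3.1 and Lemma \ref{lemformain1} gives $\dim_{L(G)}\bigl(L(G)\otimes_{\mathbb{C}[G]}C_0(\Sigma_\Psi)\bigr)=\mu(X)=1$ and, after accounting for the factor $(2!)^{-1}$ from the two orientations of each edge (cf.\ the remark after Lemma \ref{lem:auxiliary}), $\dim_{L(G)}\bigl(L(G)\otimes_{\mathbb{C}[G]}C_1(\Sigma_\Psi)\bigr)=\sum_{E\in\Psi}\mu(s(E))=C_\mu(\Psi)$. Because $\Sigma_\Psi$ is one-dimensional, Proposition \ref{mainthm1} identifies $\beta_n^{(2)}(\Sigma_\Psi,G)$ with $\dim_{L(G)}H_n\bigl(L(G)\otimes_{\mathbb{C}[G]}C_\bullet(\Sigma_\Psi)\bigr)$, and additivity of $\dim_{L(G)}$ across the two-term complex yields the Euler--Poincar\'e identity
\begin{equation*}
C_\mu(\Psi)=1-\beta_0^{(2)}(\Sigma_\Psi,G)+\beta_1^{(2)}(\Sigma_\Psi,G).
\end{equation*}

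Next I would apply this to $\Psi=\mathcal{E}$ and to $\Psi=\Phi$. The treeing hypothesis says precisely that no nontrivial reduced word in $\mathcal{E}$ returns to $X$, i.e.\ $(\Sigma_{\mathcal{E}})_x$ has no cycles; being also connected it is a tree, hence contractible, so $\Sigma_{\mathcal{E}}$ is $1$-connected. Corollary \ref{n-connected} (with $n=1$) then gives $\beta_0^{(2)}(\Sigma_{\mathcal{E}},G)=\beta_0^{(2)}(G)$ and $\beta_1^{(2)}(\Sigma_{\mathcal{E}},G)=\beta_1^{(2)}(G)$, whence $C_\mu(\mathcal{E})=1-\beta_0^{(2)}(G)+\beta_1^{(2)}(G)$. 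The complex $\Sigma_\Phi$, on the other hand, is only $0$-connected, so Corollary \ref{n-connected} (with $n=0$) gives the equality $\beta_0^{(2)}(\Sigma_\Phi,G)=\beta_0^{(2)}(G)$ together with the one-sided bound $\beta_1^{(2)}(\Sigma_\Phi,G)\ge\beta_1^{(2)}(G)$. Substituting into the Euler--Poincar\'e identity gives $C_\mu(\Phi)=1-\beta_0^{(2)}(G)+\beta_1^{(2)}(\Sigma_\Phi,G)\ge 1-\beta_0^{(2)}(G)+\beta_1^{(2)}(G)=C_\mu(\mathcal{E})$, which is the desired inequality.

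The conceptual engine, Corollary \ref{n-connected}, is already in hand, so the real work lies in the bookkeeping of the second paragraph. I expect the main obstacle to be threefold: verifying that $\Sigma_\Psi$ is a genuine simplicial $G$-complex in the sense of \S\S 3.1 (distinctness of entries, the symmetry and face conditions), justifying the two chain dimensions—especially the orientation factor $(2!)^{-1}$ and the identification of the measure of the edge fundamental domain with $C_\mu(\Psi)$—and handling the fact that a general finite-cost graphing need not be ULB, so that $\partial_n^{(2)}$ need not be bounded. The last point is precisely why I would run the entire Euler--Poincar\'e computation through the \emph{algebraic} complex $L(G)\otimes_{\mathbb{C}[G]}C_\bullet(\Sigma_\Psi)$ supplied by Proposition \ref{mainthm1} (valid for arbitrary $\Sigma$ via a ULB exhaustion from Proposition \ref{exhaustion}), where only additivity of L\"uck's dimension is needed. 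Finally, the equivalence between the treeing condition and acyclicity of $(\Sigma_{\mathcal{E}})_x$ will require a short measurable argument relating nontrivial reduced words meeting $X$ to loops in the fibre graphs.
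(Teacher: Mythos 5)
Your strategy is correct for the core case, but it is genuinely different from the paper's own proof, and as written it has one real gap.

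On the comparison: the paper proves Theorem \ref{treeing_attains_cost} by measure-theoretic and operator-algebraic means --- first a single-generator lemma (Lemma \ref{one_element}, via a $G$-invariant partition of $X$, the induction formula of Proposition \ref{induction}, Lemma \ref{trivial_action}, and Gaboriau's aperiodic estimate), then the finitely generated case via the free-product additivity formula (Theorem \ref{additivity}, imported from \cite{ueda}), and finally the infinite case by an approximation argument cutting an arbitrary graphing against finite sub-treeings. Your route --- associate to a disjoint graphing $\Psi$ the one-dimensional complex $\Sigma_\Psi$, identify $\alpha_1(\Sigma_\Psi)=C_\mu(\Psi)$, and play the Euler--Poincar\'e identity against Corollary \ref{n-connected} --- is exactly the $\ell^2$-proof of \cite[Th\'eor\`eme IV.1]{gaboriau:cost} that the paper itself flags as an alternative in the Remark following Lemma \ref{graphing_defines_complex}; it is, in substance, a repackaging of the paper's proof of Theorem \ref{costvsbetti}. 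It is non-circular: Sections 3 and 5.1--5.2 (Proposition \ref{mainthm1}, Theorem \ref{mainthm2}, Corollary \ref{n-connected}, Theorem \ref{morse}, Corollary \ref{euler-poincare}, Lemmas \ref{disjoint_graphing} and \ref{graphing_defines_complex}) nowhere use Theorem \ref{treeing_attains_cost}. Two of the obstacles you list are also already disposed of: disjointness is Lemma \ref{disjoint_graphing} (and distinct members of a treeing are automatically essentially disjoint and essentially disjoint from $X$, since $E$ and $E_1^{-1}E_2$ are reduced words), and the non-ULB issue is absorbed by Theorem \ref{morse} and Corollary \ref{euler-poincare}, which are stated for arbitrary $\Sigma$ with finite $\alpha_k$'s --- the ULB exhaustion happens inside their proofs, so your proposed detour through the algebraic complex $L(G)\otimes_{\mathbb{C}[G]}C_\bullet(\Sigma_\Psi)$ is unnecessary (and its chain-level dimension identification is not directly supplied by Proposition \ref{mainthm1}, which only equates homology dimensions).

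The gap is the case $C_\mu(\mathcal{E})=\infty$. You apply the Euler--Poincar\'e identity to $\Psi=\mathcal{E}$, but Corollary \ref{euler-poincare} requires $\chi(\Sigma_{\mathcal{E}})$ to be well defined, i.e. $\alpha_1(\Sigma_{\mathcal{E}})=C_\mu(\mathcal{E})<\infty$; when the treeing has infinite cost the identity $C_\mu(\mathcal{E})=1-\beta_0^{(2)}(G)+\beta_1^{(2)}(G)$ is therefore unjustified, and the theorem in that case amounts to the nontrivial assertion that $G$ then admits no finite-cost graphing at all. This case is not vacuous: it is exactly what Corollary \ref{free_group_action} with $n=\infty$ uses, and cost is not monotone under passing to subgroupoids, so one cannot simply invoke the finite case for the subgroupoids generated by finite sub-treeings.

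The gap is repairable inside your framework, along the lines of the last paragraph of the paper's proof of Theorem \ref{costvsbetti}. Suppose $\Phi$ is a graphing with $C_\mu(\Phi)<\infty$; your finite-cost argument gives $\beta_1^{(2)}(G)\leq C_\mu(\Phi)-1+\beta_0^{(2)}(G)<\infty$. Now exhaust $\Sigma_{\mathcal{E}}$ by the ULB subcomplexes $\Sigma_{\mathcal{E}_n}$ attached to the finite sub-treeings $\mathcal{E}_n=\{E_1,\dots,E_n\}$. Since these complexes have no $2$-cells, $H_1^{(2)}(C_\bullet^{(2)}(\Sigma_{\mathcal{E}_n}))=\ker\partial_1^{(2)}$ and the maps induced by the inclusions $\Sigma_{\mathcal{E}_n}\subset\Sigma_{\mathcal{E}_m}$ are isometric inclusions of these kernels, so $\nabla_1(\Sigma_{\mathcal{E}_n},\Sigma_{\mathcal{E}_m})=\beta_1^{(2)}(\Sigma_{\mathcal{E}_n},G)$ for all $m\geq n$; hence $\beta_1^{(2)}(G)=\beta_1^{(2)}(\Sigma_{\mathcal{E}},G)=\lim_n\beta_1^{(2)}(\Sigma_{\mathcal{E}_n},G)$ by Theorem \ref{mainthm2} and Proposition \ref{mainthm1}. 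Applying Corollary \ref{euler-poincare} to each (finite-cost, ULB) $\Sigma_{\mathcal{E}_n}$ gives $\beta_1^{(2)}(\Sigma_{\mathcal{E}_n},G)\geq C_\mu(\mathcal{E}_n)-1\to\infty$, contradicting the finiteness above. With this supplement your proof is complete.
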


To prove the theorem, we provide a terminology and lemmas. A Borel subset $A \subset X$ is said to be G-{\it invariant} if $r(s^{-1}(A)) \subset A$.  

\begin{lem}\label{cost_decomp}
If $X = \bigsqcup_{i \in I} X_i$ is a countable Borel partition by $G$-invariant sets, then we have $C_\mu(G) = \sum_{i \in I} C_\mu(G \upharpoonright_{X_i})$. 
\end{lem}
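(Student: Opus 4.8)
The plan is to establish the two inequalities separately, the whole argument resting on the structural observation that the $G$-invariance of the $X_i$ forces a decomposition $G = \bigsqcup_{i \in I} G\upharpoonright_{X_i}$ up to a null set. Indeed, if $s(g) \in X_i$ then $r(g) \in r(s^{-1}(X_i)) \subset X_i$, so every $g \in G$ lies in exactly one $G\upharpoonright_{X_i}$; moreover $G\upharpoonright_{X_i}$ is closed under taking inverses and products. Consequently $\mu^G = \sum_{i} \mu^G\!\upharpoonright_{G\upharpoonright_{X_i}}$, and $\mu(s(E)) = \sum_i \mu(s(E) \cap X_i)$ for every one-sheeted set $E$, since $X = \bigsqcup_i X_i$.

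For the inequality $C_\mu(G) \leq \sum_{i} C_\mu(G\upharpoonright_{X_i})$, I would fix for each $i$ a graphing $\mathcal{E}_i$ of $G\upharpoonright_{X_i}$ and set $\mathcal{E} := \bigcup_{i \in I} \mathcal{E}_i$, which is at most countable because $I$ is. The key point is that any reduced word in $\mathcal{E}$ having two adjacent letters from distinct families $\mathcal{E}_i$, $\mathcal{E}_j$ with $i \neq j$ is empty: a product $EF$ of one-sheeted sets with $s(E) \subset X_i$ and $r(F) \subset X_j$ requires $s(E) \cap r(F) \subset X_i \cap X_j = \emptyset$. Hence the only non-null reduced words in $\mathcal{E}$ are exactly those of the individual $\mathcal{E}_i$, so $\bigcup_{W \in \mathrm{Wr}(\mathcal{E})} W = \bigsqcup_i \bigcup_{W \in \mathrm{Wr}(\mathcal{E}_i)} W$ up to null sets, and the generation of $G$ up to null set reduces to that of each $G\upharpoonright_{X_i}$. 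Thus $\mathcal{E}$ is a graphing of $G$ with $C_\mu(\mathcal{E}) = \sum_i C_\mu(\mathcal{E}_i)$. Choosing each $\mathcal{E}_i$ within a summable error $\epsilon\, 2^{-i}$ of the optimum (the assertion being trivial when the right-hand side is infinite) and letting $\epsilon \to 0$ gives the bound.

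For the reverse inequality $\sum_i C_\mu(G\upharpoonright_{X_i}) \leq C_\mu(G)$, I would start from an arbitrary graphing $\mathcal{E}$ of $G$ and put $\mathcal{E}_i := \{ E \cap G\upharpoonright_{X_i} : E \in \mathcal{E} \}$, each member being one-sheeted. Using invariance again, for a reduced word $W = E_1^{\epsilon_1}\cdots E_n^{\epsilon_n}$ in $\mathcal{E}$ one has $W \cap s^{-1}(X_i) = (E_1 \cap G\upharpoonright_{X_i})^{\epsilon_1} \cdots (E_n \cap G\upharpoonright_{X_i})^{\epsilon_n}$, because $s(g) \in X_i$ for $g = g_1 \cdots g_n$ forces $s(g_n) \in X_i$ and then inductively every letter into $G\upharpoonright_{X_i}$; reducedness is preserved, and if some factor is empty both sides are empty. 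Therefore the restriction to $G\upharpoonright_{X_i}$ of $\bigcup_W W$ is covered by reduced words in $\mathcal{E}_i$, so each $\mathcal{E}_i$ is a graphing of $G\upharpoonright_{X_i}$. Then $\sum_i C_\mu(\mathcal{E}_i) = \sum_{E \in \mathcal{E}} \sum_i \mu(s(E) \cap X_i) = \sum_{E \in \mathcal{E}} \mu(s(E)) = C_\mu(\mathcal{E})$, and taking the infimum over $\mathcal{E}$ yields the claim.

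The main obstacle is the careful bookkeeping of reduced words under the partition: verifying that products of letters drawn from distinct invariant pieces vanish (so that gluing graphings produces a graphing of $G$) and that restricting a reduced word to a single piece yields a reduced word in the restricted graphing (so that the generation property transfers back). Once this combinatorial compatibility is in place, the additivity of cost is a direct consequence of the additivity of $\mu^G$ over the decomposition $G = \bigsqcup_i G\upharpoonright_{X_i}$.
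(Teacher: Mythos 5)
Your proof is correct and takes essentially the same route as the paper's: restrict an arbitrary graphing of $G$ to the pieces $G\upharpoonright_{X_i}$ to get graphings of the restrictions with the same total cost, and conversely glue (near-optimal) graphings of the $G\upharpoonright_{X_i}$ into a graphing of $G$. The paper's version is just terser, omitting the reduced-word bookkeeping that you spell out (which, modulo the standard collapse of $EE^{-1}$ into units when a restriction spoils literal reducedness, is exactly the justification needed).
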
 
\begin{proof}
Let $\mathcal{E}$ be a graphing of $G$. Since each $X_i$ is $G$-invariant, the family $\mathcal{E}_i := \{ s^{-1}(s(E) \cap X_i) \, | \, E \in \mathcal{E} \}$ is a graphing of each $G \upharpoonright_{X_i}$. Then $C_\mu(\mathcal{E}) = \sum_{i \in I} C_\mu(\mathcal{E}_i) \geq \sum_{i \in I} C_\mu(G \upharpoonright_{X_i})$. Thus $C_\mu(G) \geq \sum_{i \in I} C_\mu(G \upharpoonright_{X_i})$. Conversely, let $\mathcal{E}_i$ be a graphing of each $G \upharpoonright_{X_i}$. Then, $\bigcup_{i \in I} \mathcal{E}_i$ is a graphing of $G$, and hence $C_\mu(G) \leq \sum_{i \in I} C_\mu(\mathcal{E}_i)$. Hence we have $C_\mu(G) \leq \sum_{i \in I} C_\mu(G \upharpoonright_{X_i})$.
\end{proof}

Let $\Gamma$ be a discrete group, $X \times \Gamma \ni (x, \gamma) \mapsto x \gamma \in X$ be a (not necessarily, essentially free) pmp action on a probability space $(X, \mu)$. Define a discrete groupoid $X \rtimes \Gamma$ as follows: $X \rtimes \Gamma = X \times \Gamma$ as a Borel space, where $\Gamma$ is endowed with the discrete Borel structure, and the groupoid operations are defined in the following manner: $s : (x, \gamma) \mapsto x \gamma$, $r : (x, \gamma) \mapsto x$ and $(x, \gamma_1) (x \gamma_1, \gamma_2) := (x, \gamma_1 \gamma_2)$. This discrete groupoid clearly becomes pmp with $\mu$. We call the groupoid $X \rtimes \Gamma$ the {\it transformation groupoid} associated with the action. 

\begin{lem}\label{trivial_action}
For any finite measure space $(Y, \nu)$, we have $C_\nu(Y \rtimes_{\id} \mathbb{Z}) = \nu(Y)$. 
\end{lem}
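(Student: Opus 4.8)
The plan is to prove the two inequalities $C_\nu(Y\rtimes_{\id}\mathbb{Z}) \leq \nu(Y)$ and $C_\nu(Y\rtimes_{\id}\mathbb{Z}) \geq \nu(Y)$ separately, working directly with graphings rather than passing through the operator-algebraic side (note in particular that we may not invoke Theorem \ref{treeing_attains_cost}, since this lemma is used in its proof). Write $G = Y \rtimes_{\id}\mathbb{Z}$. Since the action is trivial, $s(y,n) = r(y,n) = y$ for all $(y,n)$, so $G$ is a bundle of copies of the group $\mathbb{Z}$ over $Y$, and a one-sheeted set is exactly the graph of a partial Borel map $y \mapsto n(y) \in \mathbb{Z}$.

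For the upper bound I would exhibit an explicit graphing of cost $\nu(Y)$. Take $\mathcal{E} = \{E_1\}$ with $E_1 := Y \times \{1\}$, which is one-sheeted. Its reduced words are precisely the sets $E_1^{\pm k} = Y \times \{\pm k\}$ for $k \geq 1$ (the reduced-word condition forces all exponents of the single letter $E_1$ to share a sign), so $X \cup \bigcup_{W} W = Y\times\mathbb{Z} = G$ and $\mathcal{E}$ is a graphing. Its cost is $\mu^G(E_1) = \nu(s(E_1)) = \nu(Y)$, giving $C_\nu(G) \leq \nu(Y)$.

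For the lower bound I would analyze an arbitrary graphing $\mathcal{E}$ fiberwise. For $y \in Y$ set $S(y) := \{ n_E(y) : E \in \mathcal{E},\ y \in s(E)\} \subset \mathbb{Z}$, where $n_E(y)$ is the unique integer with $(y, n_E(y)) \in E$. Because $G$ is abelian over each fiber, evaluating a reduced word $E_1^{\epsilon_1}\cdots E_k^{\epsilon_k}$ at $y$ yields $(y, \sum_i \epsilon_i\, n_{E_i}(y))$, and conversely every integer combination of elements of $S(y)$ is realized by such a word (distinct letters never create a forbidden adjacency); hence the fiber of $X \cup \bigcup_W W$ over $y$ is $\{y\}\times\langle S(y)\rangle$. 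If $\langle S(y)\rangle \neq \mathbb{Z}$ on a set of positive $\nu$-measure, then $s^{-1}(y)$ meets the complement $G \setminus (X\cup\bigcup_W W)$ in infinitely many points for each such $y$, forcing $\mu^G(G\setminus(X\cup\bigcup_W W)) = \infty$ and contradicting the graphing property. Thus $\langle S(y)\rangle = \mathbb{Z}$, and in particular $S(y)\neq\emptyset$, for $\nu$-a.e.~$y$.

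Finally I would integrate. Writing $d(y) := \#\{E\in\mathcal{E} : y \in s(E)\}$, Tonelli gives $C_\nu(\mathcal{E}) = \sum_{E\in\mathcal{E}}\nu(s(E)) = \int_Y d(y)\,d\nu(y)$, and since $d(y) \geq 1$ for a.e.~$y$ by the previous step, $C_\nu(\mathcal{E}) \geq \nu(Y)$; taking the infimum over graphings yields $C_\nu(G)\geq\nu(Y)$ and completes the proof. The only point requiring care — and the nearest thing to an obstacle — is the fiberwise translation of the generating condition into ordinary subgroup generation in $\mathbb{Z}$, together with the a.e.~measurability of $y\mapsto S(y)$ and $y \mapsto d(y)$ needed to justify the Tonelli step; the reduced-word bookkeeping is harmless precisely because each fiber group is abelian, so that a single generator (and hence a single graphing element) over a.e.~point is unavoidable.
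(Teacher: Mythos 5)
Your proof is correct and follows essentially the same route as the paper: the upper bound via the graphing $\{Y\times\{1\}\}$, and the lower bound by showing that the source sets of any graphing must cover $Y$ up to a null set, which the paper asserts in one line from the triviality of the action and you justify in more detail via the fiberwise subgroup argument. Your Tonelli step is just a rephrasing of the paper's estimate $\nu(Y)\leq\sum_{E\in\mathcal{E}}\nu(s(E))=C_\nu(\mathcal{E})$.
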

\begin{proof}
Since $\{ Y \times \{ 1 \} \}$ is a graphing of $Y \rtimes_{\id} \mathbb{Z}$, we have $C_\nu(Y \rtimes_{\id} \mathbb{Z}) \leq \nu(Y)$. Conversely, take an arbitrary graphing $\mathcal{E}$ of $Y \rtimes_{id} \mathbb{Z}$. Since the action $\mathbb{Z} \curvearrowright Y$ is trivial, we have $\nu(Y \setminus \bigcup_{E \in \mathcal{E}} s(E)) = 0$. Thus, we have $\nu(Y) \leq \sum_{E \in \mathcal{E}} \nu(s(E)) = C_\nu(\mathcal{E})$. Hence $C_\nu(Y \rtimes_{\id} \mathbb{Z}) \geq \nu(Y)$.
\end{proof}

Let $\mathcal{R}_G$ denote the pmp discrete equivalence relation defined to be $(r \times s) (G)$. 

\begin{lem}\label{groupoid_geq_equiv} We have $C_\mu(G) \geq C_\mu(\mathcal{R}_G)$.
\end{lem}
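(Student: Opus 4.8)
The plan is to push every graphing of $G$ forward to a graphing of $\mathcal{R}_G$ of the same cost through the canonical surjection $q := (r \times s) : G \to \mathcal{R}_G$, and then take infima. First I would record that $q$ is a surjective groupoid homomorphism: for composable $g_1, g_2$ one has $q(g_1 g_2) = (r(g_1), s(g_2)) = q(g_1) q(g_2)$, and $q(g^{-1}) = q(g)^{-1}$. Consequently $q$ sends one-sheeted sets to one-sheeted sets. Indeed, if $E \in \mathcal{G}_G$ then $q\!\upharpoonright_E$ is injective, so by \cite[Corollary 15.2]{kechris:GTM156} the image $\hat{E} := q(E)$ is a Borel one-sheeted set of $\mathcal{R}_G$ whose source and range projections coincide with $s(E)$ and $r(E)$, respectively. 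In particular $\mu^{\mathcal{R}_G}(\hat{E}) = \mu(s(E)) = \mu^G(E)$.

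Given a graphing $\mathcal{E}$ of $G$, set $\hat{\mathcal{E}} := \{ \hat{E} \mid E \in \mathcal{E} \}$. The heart of the argument is to show that $\hat{\mathcal{E}}$ is a graphing of $\mathcal{R}_G$. For this I would first observe that $q$ maps words into words: since $q$ is a homomorphism, $q(E_1^{\epsilon_1} \cdots E_n^{\epsilon_n}) \subseteq \hat{E}_1^{\epsilon_1} \cdots \hat{E}_n^{\epsilon_n}$, and any such (possibly non-reduced) product of one-sheeted sets in $\mathcal{R}_G$ is contained in $X \cup \bigcup_{W' \in \mathrm{Wr}(\hat{\mathcal{E}})} W'$. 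The reason is that whenever two adjacent factors cancel one has $\hat{E}\,\hat{E}^{-1} = r(\hat{E}) \subseteq X$, so such a pair collapses to a unit and one may contract the word inductively down to a unit or a reduced word. Hence, writing $N := \mathcal{R}_G \setminus (X \cup \bigcup_{W' \in \mathrm{Wr}(\hat{\mathcal{E}})} W')$, the preimage $q^{-1}(N)$ is disjoint from $X$ and from every reduced word in $\mathcal{E}$, so $q^{-1}(N) \subseteq G \setminus (X \cup \bigcup_{W \in \mathrm{Wr}(\mathcal{E})} W)$, which is $\mu^G$-null since $\mathcal{E}$ is a graphing of $G$.

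It then remains to transfer this nullity across $q$, for which I would use that $q$ does not collapse positive measure. If $N \subseteq \mathcal{R}_G$ satisfies $\mu^{\mathcal{R}_G}(N) > 0$, then for a positive-measure set of $x \in X$ the fiber $s^{-1}(\{x\}) \cap N$ is non-empty; each element of that fiber is of the form $(y, x) \in \mathcal{R}_G$ and therefore admits at least one $g \in G$ with $r(g) = y$, $s(g) = x$, i.e.\ $g \in s^{-1}(\{x\}) \cap q^{-1}(N)$, whence $\mu^G(q^{-1}(N)) > 0$. Applying this to the set $N$ above forces $\mu^{\mathcal{R}_G}(N) = 0$, so $\hat{\mathcal{E}}$ generates $\mathcal{R}_G$ up to a null set and is a graphing. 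Finally $C_\mu(\hat{\mathcal{E}}) = \sum_{E \in \mathcal{E}} \mu^{\mathcal{R}_G}(\hat{E}) = \sum_{E \in \mathcal{E}} \mu^G(E) = C_\mu(\mathcal{E})$, so $C_\mu(\mathcal{R}_G) \leq C_\mu(\mathcal{E})$ for every graphing $\mathcal{E}$ of $G$; taking the infimum over $\mathcal{E}$ yields $C_\mu(\mathcal{R}_G) \leq C_\mu(G)$, as desired.

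I expect the main obstacle to be precisely the generation statement, namely the verification that $\hat{\mathcal{E}}$ covers $\mathcal{R}_G$ up to a $\mu^{\mathcal{R}_G}$-null set. This splits into the combinatorial word-reduction that controls the images of non-reduced words, and the measure-theoretic fact that $q$ carries the relevant null set in the right direction. By comparison, the Borel measurability of the $\hat{E}$ and the cost bookkeeping are routine.
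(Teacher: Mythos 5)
Your proof is correct and takes essentially the same route as the paper: there, a graphing $\mathcal{E}$ of $G$ is pushed forward to the family $\Phi_{\mathcal{E}} := \{\varphi_E\}_{E \in \mathcal{E}}$ of partial isomorphisms, whose graphs are exactly your sets $\hat{E} = (r \times s)(E)$, and the equality of costs is noted before taking the infimum. The only difference is that the paper asserts the graphing property of $\Phi_{\mathcal{E}}$ without proof, whereas you verify it (the word-collapsing argument using $\hat{E}\hat{E}^{-1} \subseteq X$ plus the transfer of null sets through $r \times s$), which is a sound amplification of the same idea rather than a different method.
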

\begin{proof}
Take an arbitrary graphing $\mathcal{E}$ of $G$. Then, $\Phi_{\mathcal{E}} := \{ \varphi_E \}_{E \in \mathcal{E}}$ is a graphing of $\mathcal{R}_G$. We have $C_\mu(\mathcal{E}) = C_\mu(\Phi_{\mathcal{E}}) \geq C_\mu(\mathcal{R}_G)$. Hence we have $C_\mu(G) \geq C_\mu(\mathcal{R}_G)$.    
\end{proof}

The next lemma is a special case of Theorem \ref{treeing_attains_cost}. 

\begin{lem}\label{one_element}
If $G$ is generated by a single treeing $\{ E \}$ which consists of one element, then we have $C_\mu(G) = C_\mu( \{ E \}) = \mu(s(E))$.
\end{lem}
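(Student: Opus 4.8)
The plan is to prove the two inequalities $C_\mu(G)\le\mu(s(E))$ and $C_\mu(G)\ge\mu(s(E))$ separately. The upper bound is immediate: since $G$ is generated by the treeing $\{E\}$, the family $\{E\}$ is in particular a graphing, whence $C_\mu(G)\le C_\mu(\{E\})=\mu(s(E))$. The whole content is the reverse inequality, and for this I would first cut $X$ into $G$-invariant Borel pieces on which the partial transformation $\varphi:=\varphi_E:s(E)\to r(E)$ has a single dynamical type, and then treat each piece with the induction formula (Proposition~\ref{induction}), recombining by Lemma~\ref{cost_decomp}. Concretely, write $X$ as the disjoint union of: the isolated part $X^{\mathrm{iso}}=(s(E)\cup r(E))^c$; the periodic part $X^{\mathrm{cyc}}$ where $\varphi^n(x)=x$ for some $n\ge1$ (further split by the minimal period $n$); the ``finite-path'' part $X^{\mathrm{fin}}$ of non-periodic points whose $\mathcal{R}_G$-orbit is finite; and the infinite part $X^{\infty}$ of non-periodic points with infinite orbit. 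Each of these is a Borel $G$-invariant set, so Lemma~\ref{cost_decomp} reduces the problem to computing $C_\mu(G\!\upharpoonright_{X_i})$ and checking that it equals $\mu(s(E)\cap X_i)$ for each piece.

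On $X^{\mathrm{iso}}$ the groupoid is trivial and both sides vanish. On the aperiodic pieces $G$ is principal, and the main tool is that a complete section $Y$ realizes $C_\mu(G\!\upharpoonright_Z)-\mu(Z)=C_\mu(G\!\upharpoonright_Y)-\mu(Y)$; this is Proposition~\ref{induction} applied to the normalized restriction (cost scales linearly in $\mu$, which converts the ``$-1$'' into ``$-\mu(Z)$''). For $X^{\mathrm{fin}}$ I would take $Y$ to be the set of sources (the unique point of each finite path not lying in $r(E)$), a genuine Borel transversal on which $G\!\upharpoonright_Y$ is trivial; the induction formula then gives $C_\mu(G\!\upharpoonright_{X^{\mathrm{fin}}})=\mu(X^{\mathrm{fin}})-\mu(\text{sources})$, which equals $\mu(s(E)\cap X^{\mathrm{fin}})=\mu(X^{\mathrm{fin}})-\mu(\text{sinks})$ because $\mu$-invariance of $\varphi$ forces $\mu(\text{sources})=\mu(\text{sinks})$. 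For $X^{\infty}$ I would instead use that, the orbits being infinite, there exist complete sections of arbitrarily small measure (a marker-lemma argument); since $C_\mu(G\!\upharpoonright_Y)\ge0$, the induction formula yields $C_\mu(G\!\upharpoonright_{X^{\infty}})\ge\mu(X^{\infty})-\varepsilon$ for all $\varepsilon>0$, hence $\ge\mu(X^{\infty})$, and here $\mu$-invariance shows the sinks inside $X^{\infty}$ are null, so $\mu(s(E)\cap X^{\infty})=\mu(X^{\infty})$ and the upper bound closes the gap.

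The genuinely groupoid-theoretic case is $X^{\mathrm{cyc}}$, where the generator produces nontrivial isotropy. On the period-$n$ piece I would choose a Borel transversal $Y$ picking one point from each $n$-cycle. Since $Y$ meets each orbit once, every element of $G\!\upharpoonright_Y$ is an isotropy element, and the treeing hypothesis ($\mu^G(E^{kn}\cap X)=0$ for $k\ne0$) shows the isotropy at each $y\in Y$ is exactly the infinite cyclic group generated by the once-around loop in $E^n$; thus $G\!\upharpoonright_Y\cong Y\rtimes_{\id}\mathbb{Z}$ for the trivial $\mathbb{Z}$-action. Lemma~\ref{trivial_action} gives $C_\mu(G\!\upharpoonright_Y)=\mu(Y)$, and the induction formula then yields $C_\mu(G\!\upharpoonright_{X^{\mathrm{cyc}}_n})=\mu(X^{\mathrm{cyc}}_n)=\mu(s(E)\cap X^{\mathrm{cyc}}_n)$ (all cycle points lie in $s(E)$). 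Summing all pieces through Lemma~\ref{cost_decomp} gives $C_\mu(G)=\mu(s(E))$.

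I expect the main obstacle to be precisely this isotropy phenomenon: unlike in Gaboriau's equivalence-relation setting, a single-generator treeing of a groupoid can carry $\mathbb{Z}$-valued isotropy over a periodic base, and recognizing $G\!\upharpoonright_Y$ as the trivial transformation groupoid $Y\rtimes_{\id}\mathbb{Z}$ (so that Lemma~\ref{trivial_action} applies) is the crux. Secondary technical points are the legitimacy of applying Proposition~\ref{induction} to finite-measure restrictions after normalization, and the marker-lemma construction of arbitrarily small complete sections on the infinite part.
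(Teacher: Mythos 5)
Your proposal is correct and follows essentially the same route as the paper: the same upper bound, the same decomposition of $X$ into $G$-invariant pieces according to the dynamics of $\varphi_E$ (isolated, periodic, finite-path, and infinite-orbit points), the same use of Lemma \ref{cost_decomp} and the induction formula (Proposition \ref{induction}), and, at the crux, the same identification of the restriction of $G$ to a transversal of the periodic part with the trivial transformation groupoid $Y \rtimes_{\id}\mathbb{Z}$ so that Lemma \ref{trivial_action} applies. The only (minor) divergence is on the infinite-orbit part, where the paper quotes Gaboriau's aperiodic lower bound \cite[Proposition III.3 (1)]{gaboriau:cost} together with Lemma \ref{groupoid_geq_equiv}, whereas you re-derive that bound directly for the groupoid via arbitrarily small complete sections and the induction formula --- the same estimate in substance.
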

\begin{proof}
Since $\{ E \}$ is a graphing of $G$, we have $C_\mu(G) \leq \mu(s(E))$.

We show the converse inequality. Let $\mathcal{R}_G$ be the pmp discrete equivalence relation associated with $G$, that is, $(x, y) \in \mathcal{R}_G$ if and only if $y = \varphi_E^n(x)$ for some $n \in \mathbb{Z}$. Set $Y := s(E) \cup r(E)$ and $X_0 := X \setminus Y$. Define $X_n := \{ x \in Y\, | \, \# \mathcal{R}_G (x) = n \}$ for every $1 \leq n \leq \infty$. The family $\{ X_n \}_{0 \leq n \leq \infty}$ gives a $G$-invariant partition of $X$, thus Lemma \ref{cost_decomp} implies 
$$C_\mu(G) = C_\mu(G \upharpoonright_{X_0}) + \sum_{n \geq 1} C_\mu(G \upharpoonright_{X_n}) + C_\mu(G \upharpoonright_{X_\infty}).$$ 
We compute each term below.

\medskip
({\bf First term}: $C_\mu(G \upharpoonright_{X_0}) = 0$.) This is trivial since $G \upharpoonright_{X_0} = X_0$.

\medskip
({\bf Second term}: $C_\mu(G \upharpoonright_{X_n}) = \mu(X_n \cap r(E))$ for every $1 \leq n < \infty$.) Define a Borel subset $D_n \subset  s(E)$ for every $1 \leq n \leq \infty$ as follows: $D_n := \operatorname{Dom} (\varphi_E^n) \setminus \operatorname{Dom}(\varphi_E^{n + 1})$ for $1 \leq n < \infty$ and $D_\infty := \bigcap_{n \geq 1} \operatorname{Dom}(\varphi_E^n)$; we have $D = \bigsqcup_{n \geq 1} D_n \sqcup D_\infty$. Since $X_n = (X_n \cap D_\infty) \bigsqcup (X_n \setminus (X_n \cap D_\infty))$ is a $G$-invariant partition, we have $C_\mu(G \upharpoonright_{X_n}) = C_\mu(G \upharpoonright_{X_n \cap D_\infty}) + C_\mu(G \upharpoonright_{X_n \setminus (X_n \cap D_\infty)})$. 

First, we compute the first term. Let $F_n \subset X_n \cap D_\infty$ be a fundamental domain for $\mathcal{R}_G \upharpoonright_{X_n \cap D_\infty}$. Then, by the induction formula (Proposition \ref{induction}), we have $C_\mu(G \upharpoonright_{X_n \cap D_\infty}) - \mu(X_n \cap D_\infty) = C_\mu(G \upharpoonright{F_n}) - \mu(F_n)$. Since $\{ E \}$ is a treeing, we have $G = \bigsqcup_{k \in \mathbb{Z}} E^k$, a disjoint union, with $E^0 = X$, and then $G \upharpoonright_{F_n} = \bigsqcup_{k \in \mathbb{Z}} G \upharpoonright_{F_n} \cap E^{n k}$. For every $k \in \mathbb{Z}$, define a homomorphism $G \upharpoonright_{F_n} \cap E^{n k} \to F_n \rtimes_{\id} \mathbb{Z} : g \mapsto (s(g), k)$, giving an isomorphism $G \upharpoonright_{F_n} \to F_n \rtimes_{\id} \mathbb{Z}$.  thus Lemma \ref{trivial_action} implies that $C_\mu(G \upharpoonright_{X_n \cap D_\infty}) = \mu(X_n \cap D_\infty)$. 

Next, we compute the second term. Note that $X_n \setminus (X_n \cap D_\infty) = \bigsqcup_{k = 1}^{n -1}(X_n \cap D_k) \sqcup \varphi_E(X_n \cap D_1)$ and that $X_n \cap D_{n -1}$ is a fundamental domain for $\mathcal{R}_G \upharpoonright_{X_n \setminus (X_n \cap D_\infty)}$. Since $G \upharpoonright_{X_n \cap D_{n-1}} = X_n \cap D_{n-1}$, the induction formula implies that $C_\mu(G \upharpoonright_{X_n \setminus (X_n \cap D_\infty)}) = \mu(X_n \setminus (X_n \cap D_\infty)) - \mu(X_n \cap D_{n-1})$. 

Hence we have $C_\mu(G \upharpoonright_{X_n}) = \mu(X_n \setminus (X_n \cap D_{n-1}))$. The definition of $\{X_n \}_{1 \leq n < \infty}$ and $\{ D_n \}_{ 1 \leq n \leq \infty}$ implies that $X_n \setminus (X_n \cap D_{n-1}) = X_n \cap r(E)$. Thus we have $C_\mu(G \upharpoonright_{X_n}) = \mu(X_n \cap r(E))$. 

\medskip
({\bf Third term}: $C_\mu(G \upharpoonright_{X_\infty}) \geq \mu(X_\infty \cap r(E))$. ) The definition of $X_\infty$ implies that $\mathcal{R}_G$ is an aperiodic (i.e., every orbit is an infinite set) equivalence relation. Thus, by \cite[Proposition III.3 (1)]{gaboriau:cost} and Lemma \ref{groupoid_geq_equiv}, we conclude that $C_\mu(G \upharpoonright_{X_\infty}) \geq \mu(X_\infty \cap r(E))$.

\medskip
Therefore, we have the inequality  
$C_\mu(G) \geq \sum_{n \geq 1} \mu(X_n \cap r(E)) + \mu(X_\infty \cap r(E)) = \mu(Y \cap r(E)) = \mu(r(E)) = C_\mu(\{ E \})$, which completes the proof.
\end{proof}

We are ready to prove Theorem \ref{treeing_attains_cost}. 

\begin{proof}(Theorem \ref{treeing_attains_cost})
 Let $\mathcal{E} = \{ E_i \}_{i =1}^N$ be a treeing which generates $G$. For every $1 \leq i \leq N$, the symbol $G_{E_i}$ denotes the groupoid generated by $E_i$.

First, consider the case when $N$ is finite. Since $\mathcal{E}$ is a treeing, the groupoid $G$ is the free product $G_{E_1} \bigstar_X \cdots \bigstar_X G_{E_N}$. For every $1 \leq i \leq N$ we have $C_\mu(G_{E_i}) = \mu(s(E_i)) < \infty$ by Lemma \ref{one_element}. Thus, by the additivity formula (Theorem \ref{additivity}), we have $C_\mu(G) = \sum_{i=1}^N \mu(s(E_i)) = C_\mu(\mathcal{E})$. 

Next, consider the case when $N = \infty$. Take an arbitrary graphing $\mathcal{F} = \{ F_i \}_{i \geq 1}$ of $G$. We show $C_\mu(\mathcal{F}) \geq C_\mu(\mathcal{E})$. As in the proof of \cite[IV.39. Th\'eor\`em IV.1]{gaboriau:cost}, (decomposing each one-sheeted set if necessary) we may and do assume that every $F_i$ is a subset of a reduced word in $\mathcal{E}$. Fix $n \geq 1$. Since $\mathcal{F}$ is a graphing of $G$, there exists an integer $k(n) \geq 1$ and Borel subsets $\tilde{E}_1 \subset E_1, \dots, \tilde{E}_n \subset E_n$ satisfying the following conditions for every $1 \leq j \leq n$: $\mu^G(\tilde{E}_j) \leq 2^{-n}$; any element of $E_j \setminus \tilde{E}_j$ belongs to some word in $\mathcal{F}_{k(n)}$. On the other hand, there exists $m \geq n$ so that every element of $\mathcal{F}_{k(n)}$ is a subset of a word in $\mathcal{E}_m$. Thus, the family $\tilde{\mathcal{F}} := \mathcal{F}_{k(n)} \sqcup \{ \tilde{E}_j \}_{j = 1}^n \sqcup (\mathcal{E}_m \setminus \mathcal{E}_n)$ is a graphing of $G_{\mathcal{E}_m}$. We have $C_\mu(\mathcal{F}_{k(n)}) + \sum_{j=1}^n \mu^G(\tilde{E}_j) + C_\mu(\mathcal{E}_m \setminus \mathcal{E}_n) = C_\mu(\tilde{\mathcal{F}}) \geq C_\mu(G_{\mathcal{E}_m}) = C_\mu(\mathcal{E}_m) \geq C_\mu(\mathcal{E}_n)$. Here the last equality follows from what we have proved in the previous paragraph. Hence the inequality $C_\mu(\mathcal{F}) \geq C_\mu(\mathcal{E}_n) - n 2^{-n}$ holds for every $n \geq 1$, thus we conclude that $C_\mu(\mathcal{F}) \geq C_\mu(\mathcal{E})$. Therefore, we have $C_\mu(\mathcal{G}) = C_\mu(\mathcal{E})$. 
\end{proof}

The converse of Theorem \ref{treeing_attains_cost} is not true; in \cite[Remark 12 (1)]{ueda} it was pointed out (with a simple example) that \cite[Proposition I.11]{gaboriau:cost}, a result asserting ``any graphing attaining the cost is a treeing'', does not hold in the groupoid setting. 

\begin{cor}\label{free_group_action}
Let $n \in \mathbb{N} \cup \{ \infty \}$. For any (not necessarily essentially free) pmp action of the free group $\mathbb{F}_n$ on a probability space $(X, \mu)$, we have $C_\mu(X \rtimes \mathbb{F}_n) = n$. 
\end{cor}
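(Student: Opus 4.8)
The plan is to exhibit an explicit treeing of $G:=X\rtimes\mathbb{F}_n$ whose cost equals $n$ and then invoke Theorem~\ref{treeing_attains_cost}. Fix free generators $a_1,\dots,a_n$ of $\mathbb{F}_n$ (read $a_1,a_2,\dots$ when $n=\infty$) and set $E_i:=X\times\{a_i\}$; these lie in $\mathcal{G}_G$ since $s(x,a_i)=xa_i$ and $r(x,a_i)=x$ are injective on $E_i$. I would first record the elementary computation of groupoid products in the transformation groupoid, namely $E_{i_1}^{\epsilon_1}\cdots E_{i_k}^{\epsilon_k}=X\times\{a_{i_1}^{\epsilon_1}\cdots a_{i_k}^{\epsilon_k}\}$, so that every reduced word $W$ in the family $\mathcal{E}:=\{E_i\}_{i=1}^n$ has the form $X\times\{w\}$ for a word $w$ in $\mathbb{F}_n$.

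The crucial observation is that the defining reducedness condition for a word in $\mathcal{E}$ --- that $E_i=E_{i+1}$ forces the exponents to agree --- is exactly the condition that $w$ be a reduced word in the free group $\mathbb{F}_n$; since $\mathbb{F}_n$ is free and $W$ is non-empty, it follows that $w\neq e$. As the unit space sits inside $G$ as $X=X\times\{e\}$, this gives $W\cap X=(X\times\{w\})\cap(X\times\{e\})=\emptyset$, whence $\mu^G(W\cap X)=0$ for every reduced word $W$; thus $\mathcal{E}$ is a treeing. It is also a graphing, because the $a_i$ generate $\mathbb{F}_n$ and hence every $(x,\gamma)\in G$ lies in $X$ or in some $W\in\mathrm{Wr}(\mathcal{E})$. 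Finally, invariance of $\mu$ gives $\mu^G(E_i)=\mu(r(E_i))=\mu(X)=1$, so that $C_\mu(\mathcal{E})=\sum_{i=1}^n 1=n$ (with the sum being $\infty=n$ when $n=\infty$). Applying Theorem~\ref{treeing_attains_cost} then yields $C_\mu(G)=C_\mu(\mathcal{E})=n$.

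There is essentially no hard step here; the only point requiring care is to keep the two distinct notions of freeness apart. The hypothesis explicitly allows the action $\mathbb{F}_n\curvearrowright X$ to be non-free, but this plays no r\^{o}le, because the transformation groupoid $X\rtimes\mathbb{F}_n$ records the group element $\gamma$ formally as a coordinate; the treeing property depends only on the freeness of the abstract group $\mathbb{F}_n$, which guarantees $w\neq e$ for every non-empty reduced word. In particular the argument is completely insensitive to the presence of stabilizers in $X$, which is precisely why the non-free case costs the same as the free one.
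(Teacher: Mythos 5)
Your proposal is correct and is essentially identical to the paper's own proof: the paper likewise takes the free generators $a_i$, forms the treeing $\mathcal{E}=\{X\times\{a_i\}\}_{i=1}^n$, and applies Theorem~\ref{treeing_attains_cost}. You have merely spelled out the verification (reduced words in $\mathcal{E}$ correspond to reduced, hence nontrivial, words in $\mathbb{F}_n$, so they miss the unit space) that the paper leaves implicit.
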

\begin{proof}
Let $\{ a_i \}_{i=1}^n$ be a free generator of $\mathbb{F}_n$. Then $\mathcal{E} := \{ X \times \{a _i\} \}_{i=1}^n$ is a treeing of $X \rtimes \mathbb{F}_n$, thus we have $C_\mu(X \rtimes \mathbb{F}_n) = C_\mu(\mathcal{E}) = n$. 
\end{proof}

We will give another explanation of the Corollary in \S\S\S 5.3.1.


\section{The Morse inequalities and its corollaries} 

\subsection{The Morse inequalities}
Let $\Sigma$ be a simplicial $G$-complex. For simplicity, define $\alpha_k(\Sigma) := \dim_{L(G)} C_k^{(2)}(\Sigma)$. We prove the following theorem: 

\begin{thm} \label{morse} (groupoid version of \cite[Proposition 3.19]{gaboriau:betti}) 
Assume that the number $\alpha_k(\Sigma)$ is finite for every $0 \leq k \leq n$. Then, we have 
\begin{align*}
\alpha_n(\Sigma) - &\alpha_{n -1}(\Sigma) + \dots + (-1)^n \alpha_0(\Sigma) \leq
\beta_n^{(2)} ( \Sigma, G) - \beta_{n-1}^{(2)}(\Sigma, G) + \dots + (-1)^n \beta_0^{(2)}(\Sigma, G).
\end{align*}
\end{thm}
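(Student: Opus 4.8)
The plan is to bypass the reduced $L^2$-chain complex entirely---its boundary maps need not be bounded operators when $\Sigma$ is only assumed to satisfy $\alpha_k(\Sigma)<\infty$ rather than being ULB---and to argue inside the purely algebraic $L(G)$-complex $D_\bullet:=L(G)\otimes_{\mathbb{C}[G]}C_\bullet(\Sigma)$ with boundary maps $\bar\partial_k:=\id\otimes\partial_k$. Its homology computes the Betti numbers: by Proposition \ref{mainthm1} one has $\beta_k^{(2)}(\Sigma,G)=\dim_{L(G)}H_k(D_\bullet)$ for every $k\ge 0$. The only analytic ingredient I need is the identification of the \emph{chain} dimensions. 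The hypothesis $\alpha_k(\Sigma)<\infty$ for $0\le k\le n$ means exactly that $\Sigma^{(k)}$ has a fundamental domain of finite measure; hence Lemma \ref{lemformain1}(2), applied to $U=\Sigma^{(k)}$ and then cut down to the antisymmetric part by the averaging projection $A_k$ as in Lemma \ref{lem:auxiliary}(2), together with the $d_{L^\infty(X)}$-density of $C_k^b(\Sigma)$ in $C_k(\Sigma)$ and Lemma \ref{lem:alg1}, yields $\dim_{L(G)}D_k=\dim_{L(G)}C_k^{(2)}(\Sigma)=\alpha_k(\Sigma)$ for $0\le k\le n$.

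Granting this, the remainder is bookkeeping with L\"uck's additive dimension $\dim_{L(G)}$ (\cite[Theorem 6.7]{luck:survey}). Put $Z_k:=\dim_{L(G)}\ker\bar\partial_k$ and $\rho_k:=\dim_{L(G)}\im\bar\partial_k$, so that $\rho_0=0$ and, since $\im\bar\partial_{k+1}\subseteq\ker\bar\partial_k$, each $\rho_{k+1}\le Z_k\le\alpha_k<\infty$ for $k\le n$. Applying additivity to the short exact sequences $0\to\ker\bar\partial_k\to D_k\to\im\bar\partial_k\to 0$ and $0\to\im\bar\partial_{k+1}\to\ker\bar\partial_k\to H_k(D_\bullet)\to 0$ gives, for $0\le k\le n$,
\[
\alpha_k(\Sigma)=Z_k+\rho_k,\qquad \beta_k^{(2)}(\Sigma,G)=Z_k-\rho_{k+1},
\]
and therefore $\alpha_k(\Sigma)-\beta_k^{(2)}(\Sigma,G)=\rho_k+\rho_{k+1}$, a difference of finite nonnegative quantities.

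Finally I form the alternating sum. Because the interior terms $\rho_1,\dots,\rho_n$ cancel in consecutive pairs while $\rho_0=0$, one obtains the master identity
\[
\sum_{k=0}^{n}(-1)^{n-k}\alpha_k(\Sigma)=\sum_{k=0}^{n}(-1)^{n-k}\beta_k^{(2)}(\Sigma,G)+\rho_{n+1},
\]
in which the residual boundary term $\rho_{n+1}=\dim_{L(G)}\im\bar\partial_{n+1}$ is finite (bounded by $\alpha_n$) and nonnegative. The two alternating sums therefore differ by exactly $\rho_{n+1}\ge 0$, which is precisely the strong Morse inequality
\[
\sum_{k=0}^{n}(-1)^{n-k}\beta_k^{(2)}(\Sigma,G)\le\sum_{k=0}^{n}(-1)^{n-k}\alpha_k(\Sigma)
\]
asserted by the theorem. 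The step I expect to be the main obstacle is the degreewise identification $\dim_{L(G)}D_k=\alpha_k(\Sigma)$ under the mere finiteness hypothesis (rather than the full ULB condition under which Lemma \ref{lem:auxiliary} was phrased): one must check that Lemma \ref{lemformain1}(2) really needs only finite measure of the fundamental domain of $\Sigma^{(k)}$, and that antisymmetrization by $A_k$ followed by rank completion preserves the relevant $\dim_{L(G)}$-isomorphism, so that the chain dimensions agree in each degree; once this is in place the additive bookkeeping above is automatic.
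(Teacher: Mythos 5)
Your proposal is correct, but it takes a genuinely different route from the paper's. The paper argues on the Hilbert-module level: for ULB $\Sigma$ it applies L\"uck's result \cite[Lemma 1.18]{luck:survey} to $C_\bullet^{(2)}(\Sigma)$ to get $\alpha_k(\Sigma)=\beta_k^{(2)}(\Sigma,G)+b_k+b_{k+1}$ with $b_k=\dim_{L(G)}\overline{\im\partial_k^{(2)}}$, so that the alternating sums differ by $b_{n+1}\geq 0$; it then passes to a general $\Sigma$ via a ULB exhaustion $\{\Sigma_i\}$, combining continuity of $\dim_{L(G)}$ with the $\epsilon$-isomorphism estimate of Lemma \ref{hilb.module} to prove $\alpha_k(\Sigma)=\lim_i\alpha_k(\Sigma_i)$ and $\beta_k^{(2)}(\Sigma,G)=\lim_i\beta_k^{(2)}(\Sigma_i,G)$. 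You instead stay entirely inside the algebraic complex $D_\bullet=L(G)\otimes_{\mathbb{C}[G]}C_\bullet(\Sigma)$, whose homology computes $\beta_k^{(2)}(\Sigma,G)$ by Proposition \ref{mainthm1}, and reduce everything to L\"uck's additivity. The one nontrivial input you flag, $\dim_{L(G)}D_k=\alpha_k(\Sigma)$ for $k\leq n$, does hold exactly as you anticipate: Lemma \ref{lemformain1} (2) is stated under the hypothesis $\mu_U(F)<\infty$ only; the hypothesis $\alpha_k(\Sigma)<\infty$ is equivalent to finiteness of the fundamental domain of $\Sigma^{(k)}$ via $\alpha_k(\Sigma)=((k+1)!)^{-1}\mu_{\Sigma^{(k)}}(G\backslash\Sigma^{(k)})$; the antisymmetrization in the proof of Lemma \ref{lem:auxiliary} (2) uses the ULB assumption only through that degreewise finiteness; and the passage from $C_k^b(\Sigma)$ to $C_k(\Sigma)$ is the same density-plus-Lemma \ref{lem:alg1} step already used in the proof of Proposition \ref{mainthm1}. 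Your route buys two things: it never touches the $L^2$-boundary operators (which, as you note, are bounded only under ULB), and it bypasses the exhaustion/limit-interchange step, which is the most delicate and most loosely written part of the paper's proof; it also yields an exact defect formula with $\rho_{n+1}=\dim_{L(G)}\im(\id\otimes\partial_{n+1})$, the algebraic counterpart of the paper's $b_{n+1}$. What the paper's route buys in exchange is the pair of approximation identities $\alpha_k(\Sigma)=\lim_i\alpha_k(\Sigma_i)$ and $\beta_k^{(2)}(\Sigma,G)=\lim_i\beta_k^{(2)}(\Sigma_i,G)$, which are reused later (e.g., in the treeable case of Theorem \ref{costvsbetti}). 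One caution about the statement itself: as printed, the inequality is reversed (a typo) --- the paper's own proof concludes that the $\alpha$-alternating sum minus the $\beta$-alternating sum equals $b_{n+1}\geq 0$, which is exactly the direction you establish and the direction needed in Corollary \ref{euler-poincare}, so your conclusion matches the intended statement, not the misprinted one.
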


To prove the theorem, we need the following general lemma. Let $(M, \tau)$ be a finite von Neumann algebra equipped with a faithful normal tracial state. A morphism $f : V \to W$ between Hilbert $M$-modules is called an {\it $\epsilon$-isomorphism} for $\epsilon > 0$, if both $\dim_M \ker f$ and $\dim_M W / \overline{\im_M f}$ are not larger than $\epsilon$. The next lemma is shown in the exactly same way as in \cite[Lemme 4.2]{gaboriau:betti}.

\begin{lem}\label{hilb.module}
Let $V_\bullet$ and $W_\bullet$ are Hilbert chain $M$-complexes such that both $\dim_M V_n$ and $\dim_M W_n$ are finite for every $n \geq 0$. Assume that there exists a chain morphism $\iota_\bullet : V_\bullet \to W_\bullet$ that consists of inclusions and that 
$$d(V_\bullet, W_\bullet) := \sum_{n = 0}^\infty | \dim_M V_n - \dim_M W_n |$$
is finite. Then, the induced morphism $H_n^{(2)}(\iota_\bullet) : H_n^{(2)}(V_\bullet) \to H_n^{(2)}(W_\bullet)$ is a $d(V_\bullet, W_\bullet)$-isomorphism for every $n \geq 0$.
\end{lem}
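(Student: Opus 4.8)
The plan is to realise the inclusion $\iota_\bullet$ as part of a short exact sequence of Hilbert $M$-chain complexes and to read off the kernel and cokernel of $H_n^{(2)}(\iota_\bullet)$ from the associated long sequence in reduced $L^2$-homology. Concretely, I would first form the quotient complex $Q_\bullet$ with $Q_n := W_n/V_n$; since each $\iota_n$ is an isometric inclusion and $\partial_n^W(V_n)\subseteq V_{n-1}$, the differential $\partial_n^W$ descends to a bounded $M$-differential on $Q_\bullet$, giving a short exact sequence $0\to V_\bullet\xrightarrow{\iota_\bullet} W_\bullet\xrightarrow{\pi_\bullet} Q_\bullet\to 0$ of Hilbert $M$-chain complexes. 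Identifying $Q_n$ with the orthogonal complement $W_n\ominus V_n$ and using additivity of $\dim_M$ (\cite[Theorem 1.12 (3)]{luck:survey}) together with $V_n\subseteq W_n$, one gets $\dim_M Q_n=\dim_M W_n-\dim_M V_n=|\dim_M W_n-\dim_M V_n|$, so that $\sum_{n\ge 0}\dim_M Q_n=d(V_\bullet,W_\bullet)$.

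Next I would invoke the long homology sequence of this short exact sequence. For reduced $L^2$-homology it is not exact but only \emph{weakly exact}, i.e.\ the closure of the image equals the kernel at each term (the standard fact for Hilbert chain complexes; see \cite[\S1.1]{luck:survey}). Thus, for each $n$, weak exactness of the segment $H_{n+1}^{(2)}(Q_\bullet)\xrightarrow{\delta} H_n^{(2)}(V_\bullet)\xrightarrow{H_n^{(2)}(\iota_\bullet)} H_n^{(2)}(W_\bullet)$ gives $\ker H_n^{(2)}(\iota_\bullet)=\overline{\im\delta}$, whence $\dim_M\ker H_n^{(2)}(\iota_\bullet)\le\dim_M H_{n+1}^{(2)}(Q_\bullet)$; since $H_{n+1}^{(2)}(Q_\bullet)$ is a subquotient of $Q_{n+1}$ its dimension is at most $\dim_M Q_{n+1}\le d(V_\bullet,W_\bullet)$. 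Likewise, weak exactness at $H_n^{(2)}(W_\bullet)$ identifies $\overline{\im H_n^{(2)}(\iota_\bullet)}$ with $\ker H_n^{(2)}(\pi_\bullet)$, so the cokernel $H_n^{(2)}(W_\bullet)/\overline{\im H_n^{(2)}(\iota_\bullet)}$ is dimension-preservingly isomorphic to $\overline{\im H_n^{(2)}(\pi_\bullet)}\subseteq H_n^{(2)}(Q_\bullet)$ and hence has $\dim_M\le\dim_M Q_n\le d(V_\bullet,W_\bullet)$. The two bounds together say exactly that $H_n^{(2)}(\iota_\bullet)$ is a $d(V_\bullet,W_\bullet)$-isomorphism.

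The point I would handle most carefully — and the main obstacle — is the weak exactness of the long reduced-homology sequence, in particular the well-definedness and boundedness of the connecting map $\delta$ and the fact that passing to the closure of an image does not change $\dim_M$ (\cite[Theorem 6.7 (4)]{luck:survey}), so that the subquotients above genuinely carry the asserted dimensions. These are the Hilbert-module analogues of the estimates in \cite[Lemme 4.2]{gaboriau:betti}. Alternatively, one can bypass the long sequence and derive the two bounds directly, expressing $\ker H_n^{(2)}(\iota_\bullet)$ and the cokernel through $\ker\partial_n$ and $\overline{\im\partial_{n+1}}$ for $V_\bullet$ and $W_\bullet$ and estimating by means of the modular law $\dim_M\overline{(A+B)}+\dim_M(A\cap B)=\dim_M A+\dim_M B$ for closed submodules, combined with the rank--nullity identity $\dim_M\overline{\im\partial_n}=\dim_M(\,\cdot\,)_n-\dim_M\ker\partial_n$; this is the route \cite[Lemme 4.2]{gaboriau:betti} follows, and it transfers to the present setting with no essential change. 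In either approach it is the finiteness of $d(V_\bullet,W_\bullet)$ that makes the individual telescoping estimates assemble into a single $\epsilon=d(V_\bullet,W_\bullet)$.
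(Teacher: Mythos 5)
Your proof is correct, but it takes a genuinely different route from the paper's. The paper offers no argument of its own: it disposes of the lemma by citing Gaboriau's Lemme 4.2 in \cite{gaboriau:betti}, i.e.\ by the direct dimension count that you only sketch as an alternative in your closing paragraph (expressing the kernel and cokernel of $H_n^{(2)}(\iota_\bullet)$ through $\ker\partial_n$ and $\overline{\im\,\partial_{n+1}}$ in $V_\bullet$ and $W_\bullet$, then estimating with additivity, monotonicity and the modular identity for $\dim_M$). Your main argument instead forms the quotient Hilbert chain complex $Q_\bullet$ with $Q_n\cong W_n\ominus V_n$, so that $\sum_{n}\dim_M Q_n=d(V_\bullet,W_\bullet)$, and feeds the short exact sequence $0\to V_\bullet\to W_\bullet\to Q_\bullet\to 0$ into the long weakly exact $L^2$-homology sequence (see \cite[\S 1.1]{luck:survey}; stated there for group von Neumann algebras, but the proof works verbatim over any finite von Neumann algebra with a faithful normal trace, which is the generality needed here). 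Granting that input, your two bounds are right: weak exactness at $H_n^{(2)}(V_\bullet)$ gives $\dim_M\ker H_n^{(2)}(\iota_\bullet)\le\dim_M H_{n+1}^{(2)}(Q_\bullet)\le\lvert\dim_M W_{n+1}-\dim_M V_{n+1}\rvert$, and weak exactness at $H_n^{(2)}(W_\bullet)$ gives $\dim_M\bigl(H_n^{(2)}(W_\bullet)/\overline{\im H_n^{(2)}(\iota_\bullet)}\bigr)\le\dim_M H_n^{(2)}(Q_\bullet)\le\lvert\dim_M W_n-\dim_M V_n\rvert$. Note that these term-wise bounds are sharper than the asserted common bound $d(V_\bullet,W_\bullet)$; in particular, contrary to your closing remark, the finiteness of $d(V_\bullet,W_\bullet)$ is not what makes the estimates close up --- it only ensures each difference is finite and the statement non-vacuous. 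The trade-off between the two routes: yours is structurally cleaner and yields the finer, degree-local estimate, at the price of invoking the nontrivial weakly exact long sequence, whose bounded connecting morphism is exactly the technical point you flag; Gaboriau's argument uses nothing beyond the elementary properties of $\dim_M$ for Hilbert modules, which is why the paper can dispatch the lemma with a one-line citation.
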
  

If simplicial $G$-complexes $\Sigma \subset \Sigma^\prime$ are ULB, then we can apply the above lemma to Hilbert chain $L(G)$-complexes $C_\bullet^{(2)}(\Sigma)$ and $C_\bullet^{(2)}(\Sigma^\prime)$. Indeed, we have already seen that $\alpha_k(\Sigma)$ and $\alpha_k(\Sigma^\prime)$ are finite for every $k \geq 0$; see the paragraph posterior to Lemma \ref{lem:auxiliary}. Also, the number $d(C_\bullet^{(2)}(\Sigma), C_\bullet^{(2)}(\Sigma^\prime))$ is finite since every ULB simplicial $G$-complex is finite dimensional. 

\begin{proof}(Theorem \ref{morse})
First, we consider the case when $\Sigma$ is ULB. Applying \cite[Lemma 1.18]{luck:survey} to a Hilbert chain $L(G)$-complex $C_\bullet^{(2)}(\Sigma)$, we have $\alpha_k(\Sigma) = \beta_k^{(2)}(\Sigma, G) + b_{k+1} + \dim_{L(G)} \overline{\im {\partial_k^{(2)}}^*}$ where $b_k$ denotes $\dim_{L(G)} \overline{\im \partial_k^{(2)}}$. Since the $L(G)$-map ${\partial_k^{(2)}}^* : \overline{\im \partial_k^{(2)}} \to \overline{\im {\partial_k^{(2)}}^*}$ is an injection with dense range, the last term equals $b_k$. Hence we have $\alpha_n(\Sigma) - \alpha_{n -1}(\Sigma) + \dots + (-1)^n \alpha_0(\Sigma) - ( \beta_n^{(2)} ( \Sigma, G) - \beta_{n-1}^{(2)}(\Sigma, G) + \dots + (-1)^n \beta_0^{(2)}(\Sigma, G) ) = b_{n+1} \geq 0$. 

Next, consider the general case. Take a ULB-exhaustion $\{ \Sigma_i \}_{i \geq 1}$ of $\Sigma$. Then, the family $\{ C_k^{(2)}(\Sigma_i) \}_{i \geq 1}$ is an increasing sequence of closed subspaces of $C_k^{(2)}(\Sigma)$ with dense union. Therefore, by \cite[Theorem 1.12, (3)]{luck:survey}, we have $\alpha_k(\Sigma) = \lim_{i \to \infty} \alpha_k(\Sigma_i)$ for every $k \geq 0$. Then, we can also show that $\lim_{i \to j} d(C_\bullet^{(2)}(\Sigma_i), C_\bullet^{(2)}(\Sigma_j)) = 0$ for every $j \geq 1$. The additivity of $\dim_{L(G)}$ and Lemma \ref{hilb.module} imply that $| \beta_k^{(2)}(\Sigma_i, G) - \nabla_k(\Sigma_i, \Sigma_j) | \leq d(C_\bullet^{(2)}(\Sigma_i), C_\bullet^{(2)}(\Sigma_j))$ for every $k \geq 0$ and $j \geq i$. Hence we have $\beta_k^{(2)}(\Sigma, G) = \lim_{i \to \infty} \beta_k^{(2)}(\Sigma_i, G)$ for every $k \geq 0$. Combining the ULB case and two equalities which we have proved in the paragraph, we get the inequality for $\Sigma$.  
\end{proof}

We define $\chi(\Sigma) := \sum_{n\geq 0} ( -1 )^n \alpha_n(\Sigma)$ as long as it is well-defined, that is, it converges. Similarly, we define $\chi^{(2)}(\Sigma) = \sum_{n \geq 0} (-1)^n \beta_n^{(2)}(\Sigma, G)$ as long as it is well-defined. 

\begin{cor}\label{euler-poincare} (groupoid version of \cite[Proposition 3.20]{gaboriau:betti})
If $\chi(\Sigma)$ is well-defined, then so is $\chi^{(2)}(\Sigma)$ and these two quantities must coincide.
\end{cor}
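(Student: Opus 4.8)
The plan is to compare the two alternating series level by level through the Morse inequalities (Theorem~\ref{morse}) and to show that the discrepancy between their partial sums is squeezed to zero by the general terms $\alpha_n(\Sigma)$. Write $\chi_n:=\sum_{k=0}^n(-1)^k\alpha_k(\Sigma)$ and $\chi_n^{(2)}:=\sum_{k=0}^n(-1)^k\beta_k^{(2)}(\Sigma,G)$ for the partial sums, so that $\chi(\Sigma)=\lim_n\chi_n$ by hypothesis and the claim is that $\chi_n^{(2)}$ converges to the same limit. Introducing the alternating tails $S_n^\alpha:=\sum_{k=0}^n(-1)^{n-k}\alpha_k(\Sigma)=(-1)^n\chi_n$ and $S_n^{(2)}:=\sum_{k=0}^n(-1)^{n-k}\beta_k^{(2)}(\Sigma,G)=(-1)^n\chi_n^{(2)}$, it suffices to prove that $f_n:=S_n^\alpha-S_n^{(2)}=(-1)^n(\chi_n-\chi_n^{(2)})$ tends to $0$.

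First I would observe that the convergence of $\chi(\Sigma)$ forces its general term to vanish, so $\alpha_n(\Sigma)\to 0$; in particular every $\alpha_k(\Sigma)$ is finite, hence Theorem~\ref{morse} applies at each level $n$ and yields $f_n\ge 0$ for all $n\ge 0$ (setting $f_{-1}:=0$). Next comes the only real computation: the telescoping identity $S_n^\alpha=\alpha_n(\Sigma)-S_{n-1}^\alpha$, together with its analogue for $S_n^{(2)}$, gives the recursion
\[
f_n+f_{n-1}=\alpha_n(\Sigma)-\beta_n^{(2)}(\Sigma,G).
\]
Since $f_{n-1}\ge 0$ and $\beta_n^{(2)}(\Sigma,G)\ge 0$, this sandwiches $0\le f_n\le\alpha_n(\Sigma)$. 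Combining with $\alpha_n(\Sigma)\to 0$ gives $f_n\to 0$, whence $\chi_n-\chi_n^{(2)}=(-1)^n f_n\to 0$. As $\chi_n\to\chi(\Sigma)$, we conclude that $\chi_n^{(2)}\to\chi(\Sigma)$, i.e.\ $\chi^{(2)}(\Sigma)$ is well-defined and equals $\chi(\Sigma)$.

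I do not expect a serious obstacle here, since the substance of the statement is already carried by Theorem~\ref{morse}; the corollary is essentially the bookkeeping that passes from the level-by-level estimates to the limit. The one point requiring care is the direction of the Morse inequality: it is the inequality $S_n^\alpha\ge S_n^{(2)}$ established in the proof of Theorem~\ref{morse} (where, in the ULB case, $f_n$ is exactly $\dim_{L(G)}\overline{\im\partial_{n+1}^{(2)}}$) that makes $f_n$ non-negative and drives the squeeze; with the opposite sign the argument would collapse. I would also stress that nothing forces $\Sigma$ to be ULB or finite-dimensional: since Theorem~\ref{morse} holds for an arbitrary simplicial $G$-complex once the relevant $\alpha_k(\Sigma)$ are finite, and convergence of $\chi(\Sigma)$ supplies precisely that finiteness, the recursion and the squeeze apply verbatim in the general case.
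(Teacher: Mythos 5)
Your proof is correct and takes essentially the same route as the paper: the paper's proof chains the level-$(2n{+}1)$ and level-$2n$ Morse inequalities into $\sum_{k=0}^{2n+1}(-1)^k\alpha_k(\Sigma)\le\sum_{k=0}^{2n+1}(-1)^k\beta_k^{(2)}(\Sigma,G)\le\sum_{k=0}^{2n}(-1)^k\beta_k^{(2)}(\Sigma,G)\le\sum_{k=0}^{2n}(-1)^k\alpha_k(\Sigma)$ and squeezes using the convergence of $\chi(\Sigma)$, which is exactly the estimate your recursion $f_n+f_{n-1}=\alpha_n(\Sigma)-\beta_n^{(2)}(\Sigma,G)$ repackages. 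Your side remark is also on point: both your argument and the paper's own corollary proof use the direction $S_n^\alpha\ge S_n^{(2)}$ that is actually established in the proof of Theorem~\ref{morse} (the difference being $b_{n+1}\ge 0$), rather than the reversed inequality displayed in that theorem's statement, which is a typo.
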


\begin{proof}
Theorem \ref{morse} shows that
$$
\sum_{k = 0}^{2 n + 1} (-1)^k \alpha_k(\Sigma) \leq \sum_{k=0}^{2 n + 1} (-1)^k \beta_k^{(2)}(\Sigma, G) \leq \sum_{k=0}^{2 n} (-1)^k \beta_k^{(2)}(\Sigma, G) \leq \sum_{k=0}^{2 n} (-1)^k \alpha_k(\Sigma); 
$$
implying the desired result. 
\end{proof}

\subsection{Cost versus $L^2$-betti numbers inequality}
We prove the following theorem: 
\begin{thm} \label{costvsbetti}(groupoid version of \cite[Corollaire 3.23]{gaboriau:betti})
We have $\beta_1^{(2)}(G) - \beta_0^{(2)}(G) + 1 \leq C_\mu(G)$. Equality holds if $G$ is treeable.
\end{thm}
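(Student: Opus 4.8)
\emph{Strategy.} The plan is to read off both statements from the geometric theory of \S3--\S5 by attaching to a graphing a one-dimensional simplicial $G$-complex and feeding it into the Euler--Poincar\'e identity (Corollary~\ref{euler-poincare}) and the connectivity comparison (Corollary~\ref{n-connected}). Fix a graphing $\mathcal{E}=\{E_i\}_i$ of $G$; we may assume $C_\mu(\mathcal{E})<\infty$ and that each $E_i\cap X$ is null, since otherwise there is nothing to prove. Put $S:=\bigcup_i(E_i\cup E_i^{-1})$, a symmetric Borel set. I would form the \emph{graphing complex} $\Sigma=\Sigma(\mathcal{E})$ by taking $\Sigma^{(0)}:=G$ (the tautological quasi-periodic $G$-space, with fundamental domain $X$ and the left action), setting
$$
\Sigma^{(1)}:=\{(g_0,g_1)\in G*G : g_0\neq g_1,\ g_0^{-1}g_1\in S\},
$$
and $\Sigma^{(n)}:=\emptyset$ for $n\geq2$. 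Since $(gg_0)^{-1}(gg_1)=g_0^{-1}g_1$ and $S=S^{-1}$, the set $\Sigma^{(1)}$ is $G$-invariant and flip-invariant, and conditions (1)--(3) defining a simplicial $G$-complex hold, so $\Sigma$ is a legitimate (one-dimensional) simplicial $G$-complex.

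\emph{Computing $\alpha_0,\alpha_1$ and connectivity.} Using the description $\dim_{L(G)}C^{(2)}_n(\Sigma)=((n+1)!)^{-1}\mu_{\Sigma^{(n)}}(G\backslash\Sigma^{(n)})$ from \S\S3.3, I would compute $\alpha_0(\Sigma)=\mu(X)=1$ and, with the fundamental domain $F^{(1)}=\{(g_0,g_1)\in\Sigma^{(1)}:g_0\in X\}$ of $\Sigma^{(1)}$, the identity $\mu_{\Sigma^{(1)}}(F^{(1)})=\mu^G(S)$. Since $\mu^G(E_i^{-1})=\mu^G(E_i)=\mu(s(E_i))$ and the measure is invariant, $\mu^G(S)\leq 2\sum_i\mu(s(E_i))=2C_\mu(\mathcal{E})$, whence $\alpha_1(\Sigma)=\tfrac12\mu^G(S)\leq C_\mu(\mathcal{E})$. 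Moreover the fiber $\Sigma_x$ is the graph on the vertex set $r^{-1}(x)$ in which $g_0,g_1$ are joined exactly when $g_0^{-1}g_1\in S$; since $\mathcal{E}$ generates $G$ up to null sets, this graph is connected for $\mu$-a.e.\ $x$, i.e.\ $\Sigma$ is $0$-connected.

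\emph{The inequality.} As $\Sigma$ is one-dimensional with $\alpha_0=1$ and $\alpha_1\leq C_\mu(\mathcal{E})<\infty$, the Euler characteristic $\chi(\Sigma)=\alpha_0(\Sigma)-\alpha_1(\Sigma)$ is well-defined, so Corollary~\ref{euler-poincare} gives $\beta_0^{(2)}(\Sigma,G)-\beta_1^{(2)}(\Sigma,G)=1-\alpha_1(\Sigma)$. Connectivity and Corollary~\ref{n-connected} (with $n=0$) give $\beta_0^{(2)}(\Sigma,G)=\beta_0^{(2)}(G)$ and $\beta_1^{(2)}(\Sigma,G)\geq\beta_1^{(2)}(G)$. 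Combining these,
$$
\beta_1^{(2)}(G)-\beta_0^{(2)}(G)+1\leq \beta_1^{(2)}(\Sigma,G)-\beta_0^{(2)}(\Sigma,G)+1=\alpha_1(\Sigma)\leq C_\mu(\mathcal{E}),
$$
and taking the infimum over all graphings $\mathcal{E}$ yields $\beta_1^{(2)}(G)-\beta_0^{(2)}(G)+1\leq C_\mu(G)$.

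\emph{Equality for treeable $G$, and the main obstacle.} If $G$ is treeable I would run the construction on a treeing $\mathcal{E}$ that is also a graphing. The treeing condition $\mu^G(W\cap X)=0$ for reduced words $W$ has two consequences. First, $\mu$-a.e.\ $\Sigma_x$ has no cycles, so being connected it is a tree; hence $\Sigma$ is contractible and Theorem~\ref{mainthm2} upgrades $\beta_1^{(2)}(\Sigma,G)\geq\beta_1^{(2)}(G)$ to an equality $\beta_1^{(2)}(\Sigma,G)=\beta_1^{(2)}(G)$. Second, the sets $\{E_i\}\cup\{E_i^{-1}\}$ become essentially pairwise disjoint (a non-null $g\in E_i\cap E_j^{-1}$ with $i\neq j$ would place $s(g)$ in $E_jE_i\cap X$, and a non-null $g\in E_i\cap E_i^{-1}$ would place $r(g)$ in $E_i^2\cap X$), so $\mu^G(S)=2C_\mu(\mathcal{E})$ and $\alpha_1(\Sigma)=C_\mu(\mathcal{E})$ exactly. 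The displayed chain then collapses to $\beta_1^{(2)}(G)-\beta_0^{(2)}(G)+1=C_\mu(\mathcal{E})=C_\mu(G)$, the last step by Theorem~\ref{treeing_attains_cost}. I expect the homological input to be painless here; the real work---and the main obstacle---is the Borel/measure-theoretic bookkeeping for $\Sigma$: verifying that $\Sigma^{(1)}$ is Borel and quasi-periodic, that the fiberwise graphs are $\mu$-a.e.\ connected (resp.\ trees), and that $\mu_{\Sigma^{(1)}}(F^{(1)})$ evaluates to $\mu^G(S)$ with the correct antisymmetrization factor $1/2!$ and symmetrization $S=S^{-1}$.
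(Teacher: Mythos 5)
Your construction coincides with the paper's: the paper attaches to a graphing $\mathcal{E}$ exactly the same one-dimensional complex $\Sigma_{\mathcal{E}}$ (its $\Sigma_{\mathcal{E}}^{(1)}$ is your $\Sigma^{(1)}$), and then runs Corollary \ref{euler-poincare}, Corollary \ref{n-connected}, and, for treeings, contractibility plus Theorem \ref{mainthm2}, just as you do. One difference is in your favor: the paper first reduces to disjoint graphings (Lemma \ref{disjoint_graphing}) so as to get the exact identity $\alpha_1(\Sigma_{\mathcal{E}})=C_\mu(\mathcal{E})$ of Lemma \ref{graphing_defines_complex}, whereas you note that $\alpha_1(\Sigma)=\tfrac12\mu^G(S)\le C_\mu(\mathcal{E})$ already suffices for the inequality and that exactness is automatic for treeings (your verification that the treeing condition forces the family $\{E_i\}\cup\{E_i^{-1}\}$ to be essentially disjoint is correct); this streamlining is legitimate.

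There is, however, a genuine gap in the equality part: treeable groupoids of infinite cost. Your blanket reduction ``we may assume $C_\mu(\mathcal{E})<\infty$, since otherwise there is nothing to prove'' is fine for the inequality (modulo the minor point below) but wrong for the equality claim. If $G$ is treeable with $C_\mu(G)=\infty$ --- e.g.\ $X\rtimes\mathbb{F}_\infty$ by Corollary \ref{free_group_action}, and note that by Theorem \ref{treeing_attains_cost} \emph{every} generating treeing of such a $G$ has infinite cost, so there is no finite-cost treeing to feed into your argument --- equality asserts $\beta_1^{(2)}(G)=\infty$, a nontrivial \emph{lower} bound on $\beta_1^{(2)}(G)$. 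This is precisely what your machinery can no longer deliver: Corollary \ref{euler-poincare} rests on Theorem \ref{morse}, which requires the $\alpha_k(\Sigma)$ to be finite, and here $\alpha_1(\Sigma)=C_\mu(\mathcal{E})=\infty$, so $\chi(\Sigma)$ is not even defined. The paper closes this case by a separate argument: for a treeing $\mathcal{E}=\{E_i\}_{i=1}^\infty$, the finite subfamilies $\mathcal{E}_i=\{E_1,\dots,E_i\}$ give a ULB exhaustion $\{\Sigma_{\mathcal{E}_i}\}_{i\ge1}$ of the contractible complex $\Sigma_{\mathcal{E}}$, and applying the finite-cost equality along this exhaustion yields $\beta_1^{(2)}(G)-\beta_0^{(2)}(G)+1=\lim_{i\to\infty}C_\mu(G_i)=C_\mu(G)=\infty$, where $G_i$ is the groupoid generated by $\mathcal{E}_i$; some such limiting step must be supplied in your write-up. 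A second, minor omission: in the case $C_\mu(G)=\infty$ of the inequality you should record that $\beta_0^{(2)}(G)\le\dim_{L(G)}L(G)=1$ (as the paper does, via $\Tor_0^{\mathbb{C}[G]}(L(G),L^\infty(X))$ being a quotient of $L(G)$), so that the left-hand side is not of the indeterminate form $\infty-\infty$.
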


To prove the theorem, we provide a terminology and lemmas. We say that a graphing of $G$ is {\it disjoint} if it is a disjoint family.

\begin{lem}\label{disjoint_graphing}
$C_\mu(G) = \inf \{ C_\mu(\mathcal{E})\, | \, \mathcal{E} : \text{disjoint graphing of}\, G \}$.
\end{lem}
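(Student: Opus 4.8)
The inequality $C_\mu(G) \leq \inf \{ C_\mu(\mathcal{E}) \mid \mathcal{E} \text{ a disjoint graphing of } G \}$ is immediate, since every disjoint graphing is in particular a graphing, so the infimum over disjoint graphings is taken over a subfamily of all graphings. The plan for the reverse inequality is to show that any graphing can be \emph{disjointified} without increasing its cost, so that the two infima coincide.

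First I would fix an arbitrary graphing $\mathcal{E} = \{ E_i \}_{i \geq 1}$ of $G$ and set $\tilde{E}_i := E_i \setminus \bigcup_{j < i} E_j$, discarding any members that are null. Each $\tilde{E}_i$ is a Borel subset of the one-sheeted set $E_i$, hence $s\!\upharpoonright_{\tilde{E}_i}$ and $r\!\upharpoonright_{\tilde{E}_i}$ are restrictions of injections and thus injective, so $\tilde{E}_i \in \mathcal{G}_G$. By construction $\{ \tilde{E}_i \}_{i \geq 1}$ is a disjoint family with the same union $\bigcup_i \tilde{E}_i = \bigcup_i E_i =: A$ (up to null set). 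The cost comparison is then monotone: since $\tilde{E}_i \subseteq E_i$ we have $C_\mu(\{\tilde{E}_i\}) = \sum_i \mu^G(\tilde{E}_i) \leq \sum_i \mu^G(E_i) = C_\mu(\mathcal{E})$.

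The one step requiring care is verifying that $\{ \tilde{E}_i \}_{i \geq 1}$ is still a graphing, which is the crux of the argument. Here I would route through the observation, already implicit in the equality $C_\mu(G) = \tilde{C}_\mu(G)$ and its proof (following Ab\'ert--Weiss \cite{abert-weiss}), that whether a countable family of one-sheeted sets generates $G$ up to a null set depends only on its union $A$, and not on how $A$ is cut into pieces. Concretely, for \emph{any} family of one-sheeted sets with union $A$ one has the identity $X \cup \bigcup_{W \in \mathrm{Wr}(\{\tilde{E}_i\})} W = \bigcup_{n \geq 1} (A \cup A^{-1} \cup X)^n$: the inclusion $\subseteq$ holds because each reduced word lies in some $(A \cup A^{-1})^n$, while for $\supseteq$ one expresses an element of $(A \cup A^{-1} \cup X)^n$ as a product of factors each sitting in some $\tilde{E}_i^{\pm 1}$ or $X$, and reduces, using that cancellations $\tilde{E}_i \tilde{E}_i^{-1} \subseteq X$ land in the unit space precisely because the $\tilde{E}_i$ are one-sheeted. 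Since $\{\tilde{E}_i\}$ has the same union $A$ as $\mathcal{E}$, both families generate the same subset of $G$ modulo null sets, so $\{\tilde{E}_i\}$ is a graphing exactly because $\mathcal{E}$ is.

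Combining these, for every graphing $\mathcal{E}$ I obtain a disjoint graphing $\{\tilde{E}_i\}$ with $C_\mu(\{\tilde{E}_i\}) \leq C_\mu(\mathcal{E})$, whence $\inf \{ C_\mu(\mathcal{E}') \mid \mathcal{E}' \text{ disjoint} \} \leq C_\mu(\mathcal{E})$; taking the infimum over all graphings $\mathcal{E}$ gives $\inf \{ C_\mu(\mathcal{E}') \mid \mathcal{E}' \text{ disjoint} \} \leq C_\mu(G)$, and the proof is complete. The main obstacle is the generating claim for the disjointified family: one must ensure that replacing the $E_i$ by the strictly smaller $\tilde{E}_i$ does not shrink the generated subgroupoid, and this is exactly why I prefer to argue via the union-only characterization of generation rather than tracking reduced words through the disjointification directly.
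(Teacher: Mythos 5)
Your proof is correct and takes essentially the same route as the paper: the paper disjointifies an arbitrary graphing $\{E_i\}_{i\geq 1}$ by the very same inductive construction $\tilde{E}_n = E_n \setminus \bigl(\bigcup_{j<n}\tilde{E}_j\bigr)$ and concludes by the termwise cost comparison $\sum_i \mu^G(\tilde{E}_i) \leq \sum_i \mu^G(E_i)$. The only difference is that the paper asserts without comment that the disjointified family is still a graphing, whereas you justify this step carefully via the union-only characterization of generation (the reduction argument underlying the remark that $C_\mu(G)=\tilde{C}_\mu(G)$), which is a worthwhile addition rather than a deviation.
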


\begin{proof}
Take an arbitrary graphing $\mathcal{E} = \{ E_i \}_{i \geq 1}$ of $G$. Define a family $\tilde{\mathcal{E}} = \{ \tilde{E}_i \}_{i \geq 1}$ inductively by $\tilde{E}_1 = E_1$ and $\tilde{E}_n = E_n \setminus ( \bigcup_{j=1}^{n-1} \tilde{E}_j)$. Then $\tilde{\mathcal{E}}$ is a disjoint graphing of $G$. We have $C_\mu(\tilde{E}) = \sum_{i \geq 1} \mu^G(\tilde{E}_i) \leq \sum_{i \geq 1} \mu^G(E_i) = C_\mu(\mathcal{E})$, which completes the proof. 
\end{proof}

We need a special simplicial $G$-complex associated with each graphing $\mathcal{E}$ of $G$. In the rest of this subsection, we consider only disjoint graphings. Define $\Sigma_{\mathcal{E}}^{(0)}$ and $\Sigma_{\mathcal{E}}^{(1)}$ as follows:  
\begin{align*}
&\Sigma_{\mathcal{E}}^{(0)} = G; \\
&\Sigma_{\mathcal{E}}^{(1)} = \{ (g_0, g_1) \in \Sigma_\mathcal{E}^{(0)} * \Sigma_\mathcal{E}^{(0)}\, |\, g_0 \neq g_1 \text{and either}\, g_0^{-1} g_1\,  \text{or}\, g_1^{-1} g_0\, \text{belongs to some}\, E \in \mathcal{E} \}.
\end{align*}

\begin{lem}\label{graphing_defines_complex}
The pair $\Sigma_{\mathcal{E}} = ( \Sigma_\mathcal{E}^{(0)}, \Sigma_\mathcal{E}^{(1)} )$ defines a connected, simplicial $G$-complex. Moreover, we have $\alpha_1(\Sigma_\mathcal{E}) = C_\mu(\mathcal{E})$. 
\end{lem}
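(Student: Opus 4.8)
The plan is to verify the two assertions in turn: first that $\Sigma_{\mathcal{E}}$ is a connected simplicial $G$-complex, and then to compute $\alpha_1(\Sigma_{\mathcal{E}})$ by exhibiting a convenient fundamental domain. For the structural part, recall from \S\S3.1 that $\Sigma_{\mathcal{E}}^{(0)}=G$ is a quasi-periodic $G$-space with fundamental domain $X$. The set $\Sigma_{\mathcal{E}}^{(1)}$ is Borel in $G*G$ because $\mathcal{E}$ is countable and $(g_0,g_1)\mapsto g_0^{-1}g_1$ is Borel, and it is $G$-invariant since the diagonal action leaves $g_0^{-1}g_1$ fixed. Axiom~(1) is the built-in symmetry of the ``either\dots or'' clause, axiom~(2) is the explicit condition $g_0\neq g_1$, and axiom~(3) holds trivially because both faces of an edge lie in $\Sigma_{\mathcal{E}}^{(0)}=G$. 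To see that $\Sigma_{\mathcal{E}}^{(1)}$ is quasi-periodic, note that the diagonal action is free (the action on $G$ is) and that $F:=\{(g_0,g_1)\in\Sigma_{\mathcal{E}}^{(1)}\mid g_0\in X\}$ is a fundamental domain, since acting by $g_0^{-1}$ normalizes the first coordinate to the unit $s(g_0)$ and the resulting representative is unique.

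Connectedness I would establish fiberwise. For $\mu$-a.e.\ $x$ the vertex set is $\Sigma_{\mathcal{E},x}^{(0)}=r^{-1}(x)$, and given $g_0,g_1$ there I would write $g_0^{-1}g_1$ as a reduced word $E_1^{\epsilon_1}\cdots E_k^{\epsilon_k}$-datum $e_1^{\epsilon_1}\cdots e_k^{\epsilon_k}$ in $\mathcal{E}$, which is possible a.e.\ because $\mathcal{E}$ generates $G$. The intermediate products $g_0,\ g_0e_1^{\epsilon_1},\dots,g_1$ then form an edge path in $\Sigma_{\mathcal{E},x}$, each consecutive pair differing by a generator and hence spanning an edge, so $\Sigma_{\mathcal{E},x}$ is connected.

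For the identity $\alpha_1(\Sigma_{\mathcal{E}})=C_\mu(\mathcal{E})$ I would split $\Sigma_{\mathcal{E}}^{(1)}=\Sigma_+\sqcup\Sigma_-$ into the pieces on which $g_0^{-1}g_1$, respectively $g_1^{-1}g_0$, lies in $\bigcup_{E\in\mathcal{E}}E$; the coordinate swap interchanges $\Sigma_+$ and $\Sigma_-$, and both are $G$-invariant. An antisymmetric function is therefore freely determined by its restriction to $\Sigma_+$, so restriction gives an $L(G)$-module isomorphism $C_1^{(2)}(\Sigma_{\mathcal{E}})\cong\Gamma^{(2)}(\Sigma_+)$. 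Now $\Sigma_+$ is quasi-periodic with fundamental domain $\{(x,h)\mid x\in X,\ h\in\bigcup_E E,\ r(h)=x\}$, and since the members of the disjoint graphing are one-sheeted and $\mu$-preserving its measure is $\sum_{E\in\mathcal{E}}\mu(r(E))=C_\mu(\mathcal{E})$. Hence $\alpha_1(\Sigma_{\mathcal{E}})=\dim_{L(G)}\Gamma^{(2)}(\Sigma_+)=C_\mu(\mathcal{E})$; equivalently, $\mu_{\Sigma_{\mathcal{E}}^{(1)}}(F)=2\,C_\mu(\mathcal{E})$ and the factor $(1+1)!^{-1}=\tfrac12$ in the formula $\dim_{L(G)}C_n^{(2)}(\Sigma)=((n+1)!)^{-1}\mu_{\Sigma^{(n)}}(G\backslash\Sigma^{(n)})$ (recorded after Lemma~\ref{lem:auxiliary}) absorbs the doubling coming from the two orientations of each edge.

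The step I expect to require the most care is the honesty of the decomposition $\Sigma_{\mathcal{E}}^{(1)}=\Sigma_+\sqcup\Sigma_-$, i.e.\ that $\bigcup_E E$ and $\bigcup_E E^{-1}$ are disjoint modulo a $\mu^G$-null set. This amounts to excluding a positive-measure family of ``palindromic'' edges, those with $g_0^{-1}g_1$ a non-unit involution: should such a set persist, the fundamental domain $F$ would fail to split into two copies and the factor $\tfrac12$ would no longer match $C_\mu(\mathcal{E})$. When $G$ is principal no non-unit involutions exist and the splitting is automatic; in general I would secure it from the disjointness of $\mathcal{E}$ together with a preliminary reduction removing the overlaps $E_i\cap E_j^{-1}$ and the symmetric parts $E\cap E^{-1}$ before forming the complex, so that the orientation count giving $\mu_{\Sigma_{\mathcal{E}}^{(1)}}(F)=2\,C_\mu(\mathcal{E})$ goes through cleanly.
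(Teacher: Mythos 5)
Your verification of the simplicial structure and connectedness matches what the paper leaves to the reader, and your dimension count is the paper's own argument in different packaging: the paper's candidate fundamental domain $F=\bigcup_{E\in\mathcal{E}}(A_E^+\cup A_E^-)$, where $A_E^+$ (resp.\ $A_E^-$) is the set of edges whose first (resp.\ second) coordinate is a unit and whose other coordinate lies in $E$, is precisely the union of the obvious fundamental domains of your $\Sigma_+$ and $\Sigma_-$. However, the point you flag at the end is not a removable technicality: it is a genuine gap, it is exactly where the paper's own proof fails (at ``Thus $F$ is a fundamental domain of $\Sigma_{\mathcal{E}}^{(1)}$''), and without an extra hypothesis the asserted equality is simply false. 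If $\mu^G(E_i\cap E_j^{-1}\setminus X)>0$ for some $i,j$ (allowing $i=j$), then an edge $(g_0,g_1)$ with $g_0^{-1}g_1\in E_i\cap E_j^{-1}$ has its $G$-orbit meeting $F$ twice --- once in $A_{E_i}^+$ and once in $A_{E_j}^-$ --- equivalently $\Sigma_+\cap\Sigma_-$ is non-null, so $F$ overcounts orbits and restriction to $\Sigma_+$ is injective but not surjective. Concretely, take $G=X\rtimes_{\id}(\mathbb{Z}/2\mathbb{Z})$ and the disjoint graphing $\mathcal{E}=\{E\}$, $E=X\times\{1\}$, of cost $1$: the element $(x,1)$ carries the edge $((x,0),(x,1))$ to its swap, so the two edges over $x$ form a single $G$-orbit, a fundamental domain of $\Sigma_{\mathcal{E}}^{(1)}$ has measure $1$, and $\alpha_1(\Sigma_{\mathcal{E}})=\tfrac12<1=C_\mu(\mathcal{E})$ by the formula recorded after Lemma \ref{lem:auxiliary}. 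Your claim that principality saves the splitting is also wrong, because involutions are not the only obstruction: on $X=[0,1]$ with the flip $\varphi(x)=1-x$, the one-sheeted set $E=\{(\varphi(y),y)\mid y\in X\}$ of the orbit equivalence relation satisfies $E=E^{-1}$ while containing no involutions, and the same computation gives $\alpha_1(\Sigma_{\{E\}})=\tfrac12<1=C_\mu(\{E\})$, although this groupoid is a principal, even treeable, equivalence relation.

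In fact the fundamental domain you exhibit when checking quasi-periodicity, $F_0=\{(g_0,g_1)\in\Sigma_{\mathcal{E}}^{(1)}\mid g_0\in X\}$, is always exact and yields the correct general value $\alpha_1(\Sigma_{\mathcal{E}})=\tfrac12\,\mu^G\bigl(\bigcup_{E\in\mathcal{E}}(E\cup E^{-1})\setminus X\bigr)$, which equals $C_\mu(\mathcal{E})$ if and only if $\mu^G(E_i\cap X)=0$ for every $i$ and $\mu^G(E_i\cap E_j^{-1}\setminus X)=0$ for all $i,j$. Your proposed pre-reduction cannot rescue the lemma as stated: deleting the overlaps $E_i\cap E_j^{-1}$ leaves the complex $\Sigma_{\mathcal{E}}$ unchanged but strictly lowers $C_\mu(\mathcal{E})$, so what you would prove is the identity for a different graphing; and in the $\mathbb{Z}/2\mathbb{Z}$ example nothing can be deleted at all, since any graphing of that groupoid must contain $E$ up to null sets. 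The honest fix is to add the two null-set hypotheses above to the statement of the lemma. This costs the paper nothing: (i) the inequality $\alpha_1(\Sigma_{\mathcal{E}})\le C_\mu(\mathcal{E})$ holds for every disjoint graphing (immediate from the displayed formula, or from the injectivity of restriction $C_1^{(2)}(\Sigma_{\mathcal{E}})\to\Gamma^{(2)}(\Sigma_+)$ into a module of $L(G)$-dimension $C_\mu(\mathcal{E})$), and this is all that is used for the inequality in Theorem \ref{costvsbetti}; (ii) if $\mathcal{E}$ is a treeing, then the reduced words $E_i$ and $E_iE_j$ must avoid $X$ up to null sets, and since $E_iE_j\cap X=r(E_i\cap E_j^{-1})$ this gives exactly $\mu^G(E_i\cap X)=0$ and $\mu^G(E_i\cap E_j^{-1})=0$; hence the equality $\alpha_1(\Sigma_{\mathcal{E}})=C_\mu(\mathcal{E})$ does hold for treeings, which is precisely the case needed for the equality assertion and for the infinite-cost treeable case in the proof of Theorem \ref{costvsbetti}.
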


\begin{proof}
It is easy to see that $\Sigma_\mathcal{E}$ is a simplicial $G$-complex. Since $\mathcal{E}$ is a graphing, the complex $\Sigma_\mathcal{E}$ is connected. Define a subset $F \subset \Sigma_\mathcal{E}^{(1)}$ as follows: $F = \bigcup_{E \in \mathcal{E}} (A_E^+ \cup A_E^-)$ where $A_E^+ := \{ (g_0, g_1) \in \Sigma_\mathcal{E}^{(1)}\, |\, g_0 \in X, \, g_1 \in E \}$ and  $A_E^- := \{ (g_0, g_1) \in \Sigma_\mathcal{E}^{(1)}\, |\, g_1 \in X, \, g_0 \in E \}$. Since each $r \upharpoonright_E$ is injective, we have $A_E^+ \cap A_E^- = \emptyset$ for every $E \in \mathcal{E}$. Thus $F$ is a fundamental domain of $\Sigma_\mathcal{E}^{(1)}$. The disjointness of $\mathcal{E}$ implies that of the family $\{ A_E^+ \cup A_E^- \}_{E \in \mathcal{E}}$. Since $\mu_{\Sigma_{\mathcal{E}}^{(1)}}(A_E^+) = \mu_{\Sigma_{\mathcal{E}}^{(1)}}(A_E^-) = \mu(r(E))$ for every $E \in \mathcal{E}$, we have $\mu_{\Sigma_{\mathcal{E}}^{(1)}}(F^{(1)}) = 2 \sum_{E \in \mathcal{E}} \mu(r(E)) = 2 C_\mu(\mathcal{E})$. On the other hand, since $F^{(1)}$ is a fundamental domain, we have $\mu_{\Sigma_{\mathcal{E}}^{(1)}}(F^{(1)}) = 2 \alpha_1(\Sigma_\mathcal{E})$. Hence we get $\alpha_1(\Sigma_\mathcal{E}) = C_\mu(\mathcal{E})$.
\end{proof}

\begin{rem}
Lemma \ref{graphing_defines_complex} gives another proof of Theorem \ref{treeing_attains_cost}; we can adapt the $\ell^2$-Proof of \cite[Th\'eor\`eme IV. 1]{gaboriau:betti} due to Gaboriau \cite[\S 8]{gaboriau:notes} to arbitrary pmp discrete groupoids. To this end, it suffices to note the last assertion of Lemma \ref{graphing_defines_complex} and the fact that a graphing $\mathcal{E}$ is a treeing if and only if the simplicial complex $(\Sigma_{\mathcal{E}})_x$ is a tree for $\mu$-a.e.~$x \in X$.  
\end{rem}

We are ready to prove Theorem \ref{costvsbetti}.

\begin{proof}(Theorem \ref{costvsbetti})
First, consider the case when $C_\mu(G)$ is finite. Then there exists a graphing $\mathcal{E}$ satisfying $C_\mu(\mathcal{E}) < \infty$. Since $0 \leq \alpha_1(\Sigma_\mathcal{E}) \leq C_\mu(\mathcal{E}) < \infty$, the number $\chi(\Sigma_\mathcal{E})$ is defined and equal to $1 - \alpha_1(\Sigma_\mathcal{E}) = 1 - C_\mu(\mathcal{E})$ by Lemma \ref{graphing_defines_complex}. Thus, by Corollary \ref{euler-poincare}, we have $\beta_0^{(2)}(\Sigma_\mathcal{E}, G) - \beta_1^{(2)}(\Sigma_\mathcal{E}, G) = 1 - C_\mu(\mathcal{E})$. Since $\Sigma_\mathcal{E}$ is connected, Corollary \ref{n-connected} implies that $\beta_0^{(2)}(\Sigma_\mathcal{E}, G) = \beta_0^{(2)}(G)$ and that $\beta_1^{(2)}(\Sigma_\mathcal{E}, G) \geq \beta_1^{(2)}(G)$. Hence we have $\beta_1^{(2)}(G) - \beta_0^{(2)}(G) \leq C_\mu(\mathcal{E}) - 1$ holds for any graphing of finite cost, that is, the desired inequality holds. If $\mathcal{E}$ is a treeing, then we have equality because $\Sigma_{\mathcal{E}}$ is contractible.

Next, consider the case when $C_\mu(G) = \infty$. Then the inequality is trivial since $\beta_0^{(2)}(G)$ is finite as shown below. The number $\beta_0^{(2)}(G)$ equals the $L(G)$-dimension of the module $\Tor_0^{\mathbb{C} [G]} (L(G), L^\infty(X)) \cong L(G) \bigotimes_{\mathbb{C} [G]} L^\infty(X)$, which is a quotient of $L(G)$. Thus, by the additivity of $\dim_{L(G)}$, we conclude that $\beta_0^{(2)}(G) \leq \dim_{L(G)} L(G) = 1$. Hence we are done. If $G$ has a treeing $\mathcal{E} = \{ E_i \}_{i=1}^\infty$, then we have $\beta_1^{(2)}(G) - \beta_0^{(2)}(G) + 1 = \infty$. Indeed, the graphing $\mathcal{E}_i := \{E_1, \dots, E_i \}$ gives a ULB exhaustion $\{ \Sigma_{\mathcal{E}_i} \}_{i \geq 1}$ of $\Sigma_{\mathcal{E}}$. Then, by what we have proved in the previous paragraph, we have $\beta_1^{(2)}(G) - \beta_0^{(2)}(G) + 1 = \lim_{i \to \infty} C_\mu(G_i) = C_\mu(G) = \infty$. Here $G_i$ denotes the groupoid generated by $\mathcal{E}_i$.
\end{proof}

\subsection{An application of the cost versus $L^2$-Betti numbers inequality} 
We have already computed the cost $C_\mu(X \rtimes \mathbb{F}_n)$ directly in Corollary \ref{free_group_action}. We give another explanation of the corollary as an application of Theorem \ref{costvsbetti}. 

To this end, we compute the $L^2$-Betti numbers $\beta_0^{(2)}(X \rtimes \mathbb{F}_n)$ and $\beta_1^{(2)}(X \rtimes \mathbb{F}_n)$. For the case when the action is essentially free, Sauer \cite[Theorem 5.5]{sauer} proved that these $L^2$-Betti numbers coincide exactly with those of the group. Although it is probably well-known that his proof works in the case when the action is not essentially free, we will give its explanation for the sake of completeness. 

Define a $L^\infty (X)$-module $L^\infty(X) \rtimes_{\mathrm{alg}} \Gamma$ as a free $L^\infty(X)$-module with a basis $\{ u_\gamma \}_{\gamma \in \Gamma}$. We endow $L^\infty(X) \rtimes_{\mathrm{alg}} \Gamma$ with a ring structure by requiring $u_\gamma f u_{\gamma^{-1}} = f(\cdot \gamma)$ and $u_\gamma u_{\gamma^\prime} = u_{\gamma \gamma^\prime}$ for $\gamma$, $\gamma^\prime \in \Gamma$ and $f \in L^\infty(X)$. Define a map $\iota : L^\infty(X) \rtimes_{\mathrm{alg}} \Gamma \to \mathbb{C} [ X \rtimes \Gamma ]$ by $\iota( \sum_{\gamma} f_\gamma u_\gamma) := \sum_{\gamma} (f_\gamma \circ r) \mathbbm{1}_{X \times \{ \gamma \}}$. It is not hard to see that $\iota$ is an injective $L^\infty(X)$-module map. 

\begin{lem}\label{inclusion}
The above inclusion $\iota: L^\infty(X) \rtimes_{\mathrm{alg}} \Gamma \to \mathbb{C} [X \rtimes \Gamma]$ is a $\dim_{L^\infty(X)}$-isomorphism.
\end{lem}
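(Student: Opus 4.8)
The plan is to leverage the injectivity of $\iota$ that has already been recorded: since $\ker\iota = 0$, the kernel has vanishing $\dim_{L^\infty(X)}$ automatically, so it remains only to show $\dim_{L^\infty(X)}\coker\iota = 0$. Following the pattern used in the proofs of Lemma~\ref{lemformain1} and Theorem~\ref{mainthm2}, it then suffices to prove that the image $\iota(L^\infty(X)\rtimes_{\mathrm{alg}}\Gamma)$ is $d_{L^\infty(X)}$-dense in $\mathbb{C}[X\rtimes\Gamma]$, since a $d_{L^\infty(X)}$-dense submodule inclusion is a $\dim_{L^\infty(X)}$-isomorphism.

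First I would make the $L^\infty(X)$-module structure on $\mathbb{C}[X\rtimes\Gamma]$ explicit. For $a\in L^\infty(X)\subset\mathbb{C}[G]$, viewed as a function supported on the units, and $f\in\mathbb{C}[G]$, one has $(a\cdot f)(g)=a(r(g))f(g)$, so $a$ acts as the pointwise multiplier $a\circ r$; in particular a projection $\mathbbm{1}_Z$, $Z\subset X$, fixes $f$ precisely when $r(\supp f)\subset Z$ up to a null set. Under $\iota$ the basis vector $u_\gamma$ goes to $\mathbbm{1}_{E_\gamma}$ with $E_\gamma=X\times\{\gamma\}$, and $\iota(\sum_\gamma f_\gamma u_\gamma)=\sum_\gamma (f_\gamma\circ r)\mathbbm{1}_{E_\gamma}$; a direct check shows this intertwines the two $L^\infty(X)$-actions, so the image is exactly the set of elements of $\mathbb{C}[G]$ supported on finitely many slices $E_\gamma$ with each slice-component in $L^\infty(X)$.

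For the density I would take $f\in\mathbb{C}[X\rtimes\Gamma]$ and fix a nested exhaustion of $\Gamma$ by finite sets $S$. Put $f_S := f\cdot\mathbbm{1}_{X\times S}$; writing $f^{(\gamma)}(x):=f(x,\gamma)$, which lies in $L^\infty(X)$ because $f$ is essentially bounded, we have $f_S=\iota(\sum_{\gamma\in S}f^{(\gamma)}u_\gamma)\in\im\iota$. The difference $f-f_S$ is supported on $\bigsqcup_{\gamma\notin S}E_\gamma$, hence is fixed by $\mathbbm{1}_{Z_S}$ with $Z_S:=\{x\in X \mid f(x,\gamma)\neq 0 \text{ for some }\gamma\notin S\}$, giving $[f-f_S]_{L^\infty(X)}\le\mu(Z_S)$. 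The sets $Z_S$ decrease as $S$ grows, and their intersection is $\{x\mid f(x,\cdot)\text{ has infinite support}\}$, which is null because the defining boundedness clause of $\mathbb{C}[G]$ forces $\#(r^{-1}(x)\cap\supp f)<\infty$ for $\mu$-a.e.~$x$. Hence $\mu(Z_S)\to 0$ and $d_{L^\infty(X)}(f_S,f)\to 0$, which establishes density.

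The only genuinely delicate point is this last one: recognizing that the finite-fiber-width clause in the definition of $\mathbb{C}[G]$ is exactly what makes the tail sets $Z_S$ shrink to a null set, so that a function supported on infinitely many slices is nonetheless rank-approximable by finite-slice (hence in-image) truncations. Everything else---the identification of the module action with the multiplier $a\circ r$, the passage from $d_{L^\infty(X)}$-density to a $\dim_{L^\infty(X)}$-isomorphism, and the estimate $[f-f_S]_{L^\infty(X)}\le\mu(Z_S)$---is routine once the two module structures have been matched up through $\iota$.
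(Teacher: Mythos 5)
Your proof is correct and takes essentially the same route as the paper's: reduce to $d_{L^\infty(X)}$-density of the image, approximate by truncations supported on finitely many group slices, and bound the rank of the tail by the measure of the set of points whose fiber support is not yet exhausted, which shrinks to a null set precisely because the definition of $\mathbb{C}[X\rtimes\Gamma]$ forces $\#(r^{-1}(x)\cap\supp f)$ to be finite $\mu$-a.e. The only (immaterial) difference is the choice of approximant --- the paper cuts in the $X$-direction, using $\mathbbm{1}_{X_n}\varphi$ with $X_n$ the set of good points, while you cut in the $\Gamma$-direction, using $f\cdot\mathbbm{1}_{X\times S}$ --- but the two error terms are fixed by the same projection, so the estimates coincide.
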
     

\begin{proof}
It suffices to show the $d_{L^\infty(X)}$-density of $L^\infty(X) \rtimes_{\mathrm{alg}} \Gamma$ in $\mathbb{C} [X \rtimes \Gamma]$. Remark that $\varphi \in \mathbb{C} [X \rtimes \Gamma ]$ belongs to $L^\infty(X) \rtimes_{\mathrm{alg}} \Gamma$ if and only if there exists a finite subset $F \subset \Gamma$ such that $\varphi((x, \gamma)) = 0$ for every  $\gamma \notin F$ and $\mu$-a.e.~$x \in X$. Take $\varphi \in \mathbb{C} [X \rtimes \Gamma ]$. Choose an enumeration $\Gamma = \{ \gamma_i \}_{i \geq 1}$. For every $n \geq 0$, define $X_n := \{ x \in X \, | \, \varphi((x, \gamma_i)) = 0 \, \text{ for every } \, i > n \}$. Then, by the above remark, we have $\mathbbm{1}_{X_n} \varphi \in L^\infty(X) \rtimes_{\mathrm{alg}} \Gamma$. Also we have $\mu(X_n) \to 1$ as $n \to \infty$. Thus, we have $d_{L^\infty(X)} (\mathbbm{1}_{X_n} \varphi, \varphi) = [ \mathbbm{1}_{X_n^c} \varphi ] \leq \mu(X_n^c) \to 0$ as $n \to \infty$. Hence we are done.
\end{proof}

\begin{thm}\label{betti_of_action} (\cite[Theorem 5.5]{sauer})
$\beta_n^{(2)}(\Gamma) = \beta_n^{(2)}( X \rtimes \Gamma)$ holds for every $n \geq 0$.
\end{thm}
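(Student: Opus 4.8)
The plan is to follow the algebraic route of Sauer, reducing everything to a sequence of dimension-isomorphisms along the ring tower
\[
\mathbb{C}[\Gamma]\ \longrightarrow\ B:=L^\infty(X)\rtimes_{\mathrm{alg}}\Gamma\ \stackrel{\iota}{\hookrightarrow}\ \mathbb{C}[X\rtimes\Gamma]\ \subset\ L(X\rtimes\Gamma),
\]
together with the \emph{trace-preserving} inclusion $L(\Gamma)\subseteq L(X\rtimes\Gamma)$ coming from the unitaries $u(E_\gamma)$. Write $G=X\rtimes\Gamma$, $M=L(G)$, and recall the defining formulas $\beta_n^{(2)}(G)=\dim_M\Tor_n^{\mathbb{C}[G]}(M,L^\infty(X))$ and $\beta_n^{(2)}(\Gamma)=\dim_{L(\Gamma)}\Tor_n^{\mathbb{C}[\Gamma]}(L(\Gamma),\mathbb{C})$. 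The goal is then the chain
\[
\dim_M\Tor_n^{\mathbb{C}[G]}(M,L^\infty(X))\overset{(\ast)}{=}\dim_M\Tor_n^{B}(M,L^\infty(X))\overset{(\dagger)}{=}\dim_M\Tor_n^{\mathbb{C}[\Gamma]}(M,\mathbb{C})\overset{(\ddagger)}{=}\dim_{L(\Gamma)}\Tor_n^{\mathbb{C}[\Gamma]}(L(\Gamma),\mathbb{C}).
\]

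For the reduction to the group ring I would begin with a free resolution $F_\bullet\to\mathbb{C}$ of the trivial $\mathbb{C}[\Gamma]$-module. A direct inspection of the crossed-product relation $u_\gamma f u_{\gamma^{-1}}=f(\cdot\gamma)$ shows that $B$ is free, hence flat, as a left and as a right $\mathbb{C}[\Gamma]$-module, and that $B\otimes_{\mathbb{C}[\Gamma]}\mathbb{C}\cong L^\infty(X)$ is exactly the restriction to $B$ of Sauer's coefficient module (on which $u_\gamma$ acts by the translation $\alpha_\gamma$). Thus $B\otimes_{\mathbb{C}[\Gamma]}F_\bullet$ is a free $B$-resolution of $L^\infty(X)$, and flat base change gives $(\dagger)$ as an honest isomorphism, the right-hand side being computed by $M\otimes_{\mathbb{C}[\Gamma]}F_\bullet$. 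For $(\ast)$ I would invoke the rank-completion machinery: by Lemma \ref{inclusion} the map $\iota$ is $d_{L^\infty(X)}$-dense, by Lemma \ref{lem:alg3} the pair $L^\infty(X)\subset\mathbb{C}[G]$ (hence also $L^\infty(X)\subset B$) satisfies the hypothesis of Lemma \ref{lem:alg1}, and Lemma \ref{lem:alg2} lets me compute $\dim_M$ of the Tor-modules from any resolution with $d_{L^\infty(X)}$-dense projective submodules. This is precisely the transfer along the rank-dense subalgebra $B\subset\mathbb{C}[G]$ that Lemma \ref{inclusion} was designed to power, and it is insensitive to whether the action is essentially free.

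Finally, for $(\ddagger)$ I would use that $\mu$ is $\Gamma$-invariant, which makes $a\,u(E_\gamma)\mapsto\bigl(\int a\,d\mu\bigr)u(E_\gamma)$ a $\tau_G$-preserving conditional expectation $M\to L(\Gamma)$; in particular $L(\Gamma)\subseteq M$ is a trace-preserving inclusion. The complex $M\otimes_{\mathbb{C}[\Gamma]}F_\bullet$ is obtained by applying $M\otimes_{L(\Gamma)}-$ to the complex $D_\bullet:=L(\Gamma)\otimes_{\mathbb{C}[\Gamma]}F_\bullet$ of free $L(\Gamma)$-modules computing $\beta_\bullet^{(2)}(\Gamma)$, so it suffices to show that $M\otimes_{L(\Gamma)}-$ preserves $\dim$ and is dimension-flat. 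The key structural point is that right multiplication by $L(\Gamma)$ on $M=L^\infty(X)\rtimes\Gamma$ is \emph{untwisted}: $\bigl(\sum_\gamma a_\gamma u_\gamma\bigr)u_\delta=\sum_\gamma a_\gamma u_{\gamma\delta}$ involves no appearance of the action, so on $L^2(M)=L^2(X)\,\overline{\otimes}\,\ell^2(\Gamma)$ the right $L(\Gamma)$-action is $\mathrm{id}_{L^2(X)}\otimes\rho$, exhibiting $M$ as a multiple of the standard right $L(\Gamma)$-module. Lück's induction formula then yields $\dim_M(M\otimes_{L(\Gamma)}W)=\dim_{L(\Gamma)}W$ for every $L(\Gamma)$-module $W$ together with the dimension-flatness $\dim_M\Tor_p^{L(\Gamma)}(M,W)=0$ for $p\geq1$; feeding these through the short exact sequences of cycles and boundaries of $D_\bullet$ and using additivity of $\dim$ gives $\dim_M H_n(M\otimes_{L(\Gamma)}D_\bullet)=\dim_{L(\Gamma)}H_n(D_\bullet)$, which is $(\ddagger)$.

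I expect the main obstacle to be the last step $(\ddagger)$: although the untwisted right action makes it intuitively clear that $M$ is ``$L(\Gamma)$-free'', upgrading this Hilbert-module statement to the algebraic dimension-flatness of the inclusion $L(\Gamma)\subseteq M$ requires the full force of Lück's dimension theory, and it is here that one must either cite the corresponding statement from Sauer's treatment or reprove it. By contrast, $(\dagger)$ is elementary homological algebra and $(\ast)$ is routine given the rank-completion lemmas already established in the paper.
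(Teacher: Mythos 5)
Your proposal is correct and takes essentially the same route as the paper: the paper runs your chain in the opposite direction, citing Sauer's Theorems 2.6 and 4.3 for your step $(\ddagger)$ (Lück's induction formula plus dimension-flatness of $L(\Gamma)\subseteq L(X\rtimes\Gamma)$), the same flat base change for $(\dagger)$, and Lemma \ref{inclusion} together with Sauer's Lemma 4.8 and Theorem 4.11 for the rank-density transfer $(\ast)$. Two small cautions, both of which you essentially flag yourself: the ``untwisted right action'' picture is a Hilbert-module statement and cannot substitute for the algebraic dimension-flatness citation, and $(\ast)$ really rests on Sauer's Theorem 4.11 (comparison of $\Tor$ over a rank-dense subring) rather than on Lemmas \ref{lem:alg1} and \ref{lem:alg2} alone, which by themselves only handle a single base ring.
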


\begin{proof}
We have  
\begin{align*}
\beta_n^{(2)}(\Gamma) &= \dim_{L(\Gamma)} \Tor_n^{\mathbb{C} [\Gamma]} (L(\Gamma), \mathbb{C}) \\
&= \dim_{L(X \rtimes \Gamma)} L(X \rtimes \Gamma) \bigotimes_{L(\Gamma)} \Tor_n^{\mathbb{C} [\Gamma]} (L(\Gamma), \mathbb{C}) \qquad (\text{\cite[Theorem 2.6]{sauer}})\\
&= \dim_{L(X \rtimes \Gamma)} \Tor_n^{\mathbb{C} [\Gamma]} (L(X \rtimes \Gamma) \bigotimes_{\mathbb{C} [\Gamma]} L(\Gamma), \mathbb{C}), \qquad (\text{\cite[Theorem 4.3]{sauer}}) 
\end{align*}
equals $\dim_{L(X \rtimes \Gamma)} \Tor_n^{L^\infty(X) \rtimes_{\mathrm{alg}} \Gamma} (L(X \rtimes \Gamma), L^\infty(X))$, since $L^\infty(X) \rtimes_{\mathrm{alg}} \Gamma$ is a free $L^\infty(X)$-module, it is a flat right $\mathbb{C} [\Gamma]$-module. By Lemma \ref{inclusion} and \cite[Lemma 4.8]{sauer}, we can apply \cite[Theorem 4.11]{sauer} to the ring inclusion 
$
L^\infty(X) \subset L^\infty(X) \rtimes_{\mathrm{alg}} \Gamma \subset \mathbb{C} [X \rtimes \Gamma] \subset L(X \rtimes \Gamma)
$.
Then, we get  $\dim_{L(X \rtimes \Gamma)} \Tor_n^{L^\infty(X) \rtimes_{\mathrm{alg}} \Gamma} (L(X \rtimes \Gamma), L^\infty(X)) = \dim_{L(X \rtimes \Gamma)} \Tor_n^{\mathbb{C} [X \rtimes \Gamma ]} (L(X \rtimes \Gamma), L^\infty(X)) = \beta_n^{(2)}( X \rtimes \Gamma)$, which completes the proof.
\end{proof}

We are ready to prove Corollary \ref{free_group_action}. 
\begin{proof}(Corollary \ref{free_group_action})
Applying Theorem \ref{costvsbetti}, we get $\beta_1^{(2)}(X \rtimes \mathbb{F}_n) - \beta_0^{(2)}( X \rtimes \mathbb{F}_n) \leq C_\mu(X \rtimes \mathbb{F}_n) - 1$. Theorem \ref{betti_of_action} and \cite[Example 4.2]{cheeger-gromov} imply that the left hand side is equal to $\beta_1^{(2)}(\mathbb{F}_n) - \beta_0^{(2)}(\mathbb{F}_n) = n - 1$. Therefore, $C_\mu(X \rtimes \mathbb{F}_n) =n$, since $C_\mu(X \rtimes \mathbb{F}_n) \leq n$ is trivial. 
\end{proof}

\begin{rem}
One can give another proof of Lemma \ref{one_element} for the case when $G$ is ergodic in the same way as that in the above proof; we can compute the $L^2$-Betti numbers $\beta_0^{(2)}(G)$ and $\beta_1^{(2)}(G)$ using results of Alekseev-Kyed \cite[Corollary 6.8]{alekseev-kyed} and Sauer-Thom \cite[Corollary 1.4]{sauer-thom}. Probably, it is also possible to prove Lemma \ref{one_element} for the general case in the same way thanks to \cite[Remark 1.7]{sauer-thom}. However, such a proof is more complicated than that we gave in \S\S\S 4.2.3. 
\end{rem}
\subsection*{Acknowledgment}
The author would like to express his gratitude to Professor Yoshimichi Ueda, who is his supervisor for helpful comments and constant encouragement.

\end{document}